\documentclass[oneside,11pt]{amsart}

\makeatletter
\@namedef{subjclassname@2020}{
  \textup{2020} Mathematics Subject Classification}
\makeatother
\usepackage[a4paper,width=170mm,top=27mm,bottom=27mm]{geometry}
\usepackage{cases}
\usepackage{amsmath,amssymb}
\usepackage[hidelinks]{hyperref}
\usepackage{esint}

\usepackage[french,english]{babel}

\newtheorem{theorem}{Theorem}[section]
\newtheorem{lemma}[theorem]{Lemma}

\newtheorem{proposition}[theorem]{Proposition}

\theoremstyle{definition}
\newtheorem{remx}[theorem]{Remark}
\newenvironment{remark}
  {\pushQED{\qed}\remx}
  {\popQED\endremx}

\newcommand{\pt}{\partial}
\newcommand{\C}{\mathbb{C}}
\newcommand{\R}{\mathbb{R}}
\newcommand{\Z}{\mathbb{Z}}
\newcommand{\N}{\mathbb{N}}
\newcommand{\T}{\mathbb{T}}

\newcommand{\vare}{\varepsilon}
\newcommand{\ld}{\lambda}

\numberwithin{equation}{section}

\begin{document}
\address{Yongming Luo
\newline \indent
Faculty of Computational Mathematics and Cybernetics
\newline \indent Shenzhen MSU-BIT University, China}
\email{luo.yongming@smbu.edu.cn}

\address{Jiayu Zheng
\newline \indent
Faculty of Computational Mathematics and Cybernetics
\newline \indent Shenzhen MSU-BIT University, China}
\email{jyzheng@smbu.edu.cn}

\title[Unconditional uniqueness for the cubic NLS]
{Unconditional uniqueness for the cubic nonlinear Schr\"odinger equation in $H^{1/2}(\T^3)$}
\author{Yongming Luo and Jiayu Zheng}

\begin{abstract}
In this paper, we establish unconditional uniqueness for the cubic nonlinear Schrödinger equation (NLS) in the scaling-critical class \(H^{1/2}(\mathbb T^3)\). Unlike approaches based on the GP-hierarchy or normal form reductions, which typically involve higher-order expansions, our proof works directly with the original cubic NLS, where we also make use of several new \textit{a priori} Euclidean-like Strichartz estimates. More precisely, we show that every distributional solution in \(C_tH_x^{1/2}\) belongs \textit{a priori} to the class \(L^2_{t,\text{loc}}L_x^6\). The key ingredients are a suitable high–low modulation decomposition and the compactness of solution trajectories on bounded time intervals. We also exploit the preservation of the canonical symplectic form under the truncated NLS flow, in order to obtain necessary stability estimates in the rough Sobolev space \(H^{-1/2}(\mathbb T^3)\).
\end{abstract}

\keywords{Unconditional uniqueness, critical regularity, Strichartz estimates, symplectic form}
\subjclass[2020]{35Q55, 35A02, 35B33}

\maketitle

\section{Introduction}
Consider the cubic nonlinear Schr\"odinger equation (NLS) 
\begin{equation}\label{cnls}
(i\pt_t +\Delta) u=\mu |u|^2 u,\qquad \mu=\pm 1
\end{equation}
on $\T^3$. We aim to prove the following unconditional uniqueness result for the cubic NLS in the scaling critical space $H^{1/2}(\T^3)$.

\begin{theorem}\label{main thm}
Let $I\subset \R$ be an interval and suppose that $u,v\in C(I; {H}^{1/2}(\T^3))$ are distributional solutions of the cubic NLS \eqref{cnls} satisfying $u(t_0)=v(t_0)$ for some $t_0\in I$. Then $u(t)=v(t)$ for any $t\in I$.
\end{theorem}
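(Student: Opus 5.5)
We follow the strategy announced in the abstract. First we show that every distributional solution $u\in C(I;H^{1/2}(\T^3))$ lies a priori in $L^2_{t,\mathrm{loc}}L^6_x$. Once this is known, uniqueness reduces to a conditional statement. Two solutions in $C_tH^{1/2}_x\cap L^2_tL^6_x$ with the same data at $t_0$ coincide, by a Gronwall/continuity argument for the difference $w=u-v$ in the rough norm $H^{-1/2}(\T^3)$. By time translation and symmetry it suffices to work on a short interval $J=[t_0,t_0+T]$, shrink $T$, and iterate. The final step uses connectedness: the set $\{t\in I: u(t)=v(t)\}$ is closed by continuity in $H^{1/2}$, and local uniqueness makes it open.

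\textbf{Step 1: a priori Strichartz bound.} Fix a compact $J\subset I$. Since $u\in C(J;H^{1/2})$, the trajectory $K=\{u(t):t\in J\}$ is compact in $H^{1/2}$. Hence for every $\eta>0$ there is $N$ with $\sup_{t\in J}\|P_{>N}u(t)\|_{H^{1/2}}<\eta$. This is the substitute for smallness at critical regularity.

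We Littlewood--Paley decompose $u$ and split each piece $P_Mu$ into a low-modulation part $|\tau+|\xi|^2|\lesssim L(M)$ and a high-modulation part, with $L(M)$ a suitable power of $M$ (say $L\sim M^2$, or adapted to the time scale $M^{-1}$).

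\emph{Low-modulation part.} On time intervals of length $\sim M^{-1}$ it behaves like a free Schr\"odinger wave. There we use the Euclidean-like Strichartz estimate $\|e^{it\Delta}P_M f\|_{L^2_{|t|\le M^{-1}}L^6}\lesssim \|f\|_{L^2}$, which holds on short times by the Burq--G\'erard--Tzvetkov argument, summed over $\sim M|J|$ intervals. This costs $M^{1/2}$ and is therefore $\lesssim\|P_Mu\|_{L^\infty H^{1/2}}$.

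\emph{High-modulation part.} We use the equation: $(i\pt_t+\Delta)P_Mu=\mu P_M(|u|^2u)$. Dividing by the modulation gains $L(M)^{-1}$. We bound $P_M(|u|^2u)$ in $L^2_tH^{-s}$ via Sobolev and $u\in L^\infty H^{1/2}\subset L^\infty L^3$, possibly using $X^{s,b}$-type duality so that the cubic term can be placed in a negative Sobolev space.

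Summing in $M$ gives a bound of the form $\|u\|_{L^2_JL^6}\lesssim C(K)+\eta^2\|u\|_{L^2_JL^6}$ on short $J$. We make this rigorous with frequency-truncated norms $\|P_{\le N'}u\|$, which are finite, and then let $N'\to\infty$.

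\textbf{Step 2: stability in $H^{-1/2}$.} With $u,v\in L^2L^6$, we compare each to the truncated flows $u_N$ solving $i\pt_tu_N+\Delta u_N=\mu P_{\le N}(|P_{\le N}u_N|^2P_{\le N}u_N)$. The truncated NLS is a finite-dimensional Hamiltonian system, so its linearized flow preserves the symplectic form $\omega(f,g)=\mathrm{Im}\int f\bar g$. By duality between $H^{1/2}$ and $H^{-1/2}$, this turns $H^{1/2}$ bounds on the linearization (Strichartz controlled through Step 1) into $H^{-1/2}$ bounds for the difference.

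Concretely, we estimate $\|u(t)-v(t)\|_{H^{-1/2}}$ by pairing $u-v$ with a backward linearized solution with $H^{1/2}$ data. The trilinear terms are controlled by $\|u\|_{L^2L^6}^2+\|v\|_{L^2L^6}^2$ on $J$, which is small for short $J$. This yields $\|w\|_{L^\infty_JH^{-1/2}}\le \tfrac12\|w\|_{L^\infty_JH^{-1/2}}$, hence $w=0$. Truncation errors vanish as $N\to\infty$ by compactness of $K$.

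\textbf{Main obstacle.} The hard part is Step 1, in particular the high-modulation estimate. At critical regularity on $\T^3$ there is no room in $\ell^2$ summation, so the cubic nonlinearity of a mere $C_tH^{1/2}$ function must be shown to gain exactly enough from the modulation weight. We would first try to place $|u|^2u$ in $L^\infty_tH^{-1/2-}$ and use $L\sim M^2$. If an endpoint loss appears, we would extract the needed smallness from the compactness parameter $\eta$ via a high–low decomposition of the three factors: high$\times$high pieces are small, and low pieces are smooth.
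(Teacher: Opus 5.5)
Your overall architecture (a priori $L^2_tL^6_x$ integrability, then comparison with truncated flows whose linearization is controlled in $H^{-1/2}$ through the symplectic form, then connectedness) matches the paper's. However, both key steps, as you set them up, have genuine gaps.

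\emph{Step 1.} The modulation threshold is forced.
\begin{itemize}
\item The short-time Strichartz argument for the part with modulation $\lesssim L(M)$ costs $L(M)^{1/2}$, since time is cut into intervals of length $L(M)^{-1}\le M^{-1}$. So only $L(M)\sim M$ gives the $M^{1/2}$ you claim. At $L(M)\sim M^2$ a full-derivative loss is sharp: take a frequency-$M$ bump of width $M^{-1}$, cut off to a time interval of length $M^{-2}$.
\item At $L(M)\sim M$, dividing by the modulation gains only $M^{-1}$. A Sobolev/Bernstein treatment of the high-modulation part then loses a full derivative, e.g.\ $\|P_M\mathcal HF\|_{L^2_tL^6_x}\lesssim M\cdot M^{-1}\cdot M\,\|F\|_{L^2_tL^{6/5}_x}$.
\item Your fallback of placing $|u|^2u$ in $L^\infty_tH^{-1/2-}$ is not available. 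For $u\in H^{1/2}(\T^3)\subset L^3$ one only has $|u|^2u\in L^1\subset H^{-3/2-}$, and $u\sim|x|^{-1+\delta}$ shows this cannot be improved.
\end{itemize}
The missing ingredient is the derivative-free bound $\|\mathcal HF\|_{L^2_tL^6_x}\lesssim\|F\|_{L^2_tL^{6/5}_x}$ for the inverse of $i\pt_t+\Delta$ on modulations $\gtrsim N$ at frequency $N$ (Lemma~\ref{lem 3.3}). This is an endpoint inhomogeneous Strichartz estimate at time scale $N^{-1}$. The paper proves it from Keel--Tao on $\R^3$ plus periodization: over times $2^j/N$ a frequency-$N$ wave travels distance $\sim 2^j$. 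It is applied with exactly one factor $u$ in $L^2_tL^6_x$. The potential is split into $|P_{\le K}u|^2$, which is bounded and moved into the forcing, plus a remainder that is $O(\eta)$ in $L^\infty_tL^{3/2}_x$.

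Your justification of the absorption also does not work as stated. $P_{\le N'}$ does not commute with multiplication by the potential, so the bound for $\|P_{\le N'}u\|_{L^2_tL^6_x}$ still contains the unknown, possibly infinite, $\|u\|_{L^2_tL^6_x}$. The paper instead solves the linear fixed-point problem in $L^2_tL^6_x$. It then identifies the fixed point with $u$ through a second contraction for the same operator in the norm $\|(1-\Delta)^{-1/5}\,\cdot\,\|_{L^\infty_tL^{5,\infty}_x}$, which is finite a priori for $C_tH^{1/2}_x$ functions (Lemmas~\ref{lem 3.4} and~\ref{lem 3.5}).

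\emph{Step 2} does not close either. A rough potential such as $|u|^2+|v|^2$ does not act boundedly on $H^{-1/2}$. So the only available bound for $|u|^2u-|v|^2v\approx(|u|^2+|v|^2)w$ in $L^1_tH^{-1/2}_x$ goes through $L^1_tL^{3/2}_x$ and uses $\|w\|_{L^\infty_tL^3_x}\lesssim\|w\|_{L^\infty_tH^{1/2}_x}$. This gives $\|w\|_{L^\infty H^{-1/2}}\lesssim\epsilon\|w\|_{L^\infty H^{1/2}}$, not $\tfrac12\|w\|_{L^\infty H^{-1/2}}$.

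Pairing with a backward linearized solution does not cure this mismatch.
\begin{itemize}
\item Linearizing around $u,v$ means solving a linear equation whose coefficients are known only in $C_tH^{1/2}_x\cap L^2_tL^6_x$. The critical theory on $\T^3$ does not handle this, since its multilinear estimates need $U^2/V^2$-type control of all entries.
\item Linearizing around a truncated LWP solution leaves coefficient discrepancies multiplying $w$, which again must be measured at the $H^{1/2}$ level.
\end{itemize}
Note also that the uniform $H^{1/2}$ bounds on the linearization come from the critical local theory near a compact set, not from Step 1.

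The paper never forms $u-v$. For a single solution it differentiates $r\mapsto\Phi_{L,t-r}(P_{\le L}u(r))$, which yields $-i\mu\,D_f\Phi_{L,t-r}(P_{\le L}u(r))\bigl[P_{\le L}\bigl(|u|^2u-|P_{\le L}^2u|^2P_{\le L}^2u\bigr)\bigr]$. As the truncation parameter $L\to\infty$, this forcing tends to $0$ in $L^1_tL^{3/2}_x\subset L^1_tH^{-1/2}_x$ because $u\in L^3_tL^{9/2}_x$. Meanwhile $D_f\Phi_{L,t}$ is uniformly bounded on $H^{-1/2}$ by symplecticity and the $H^{1/2}$ bound on its inverse. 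Together with $\Phi_{L,t}\to\Phi_t$ on compact sets, this gives $u(t)=\Phi_{t-s}(u(s))$ for each solution separately, with no smallness or absorption needed, and uniqueness follows.
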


\subsection{Historical background}
The question of unconditional uniqueness for the solutions of the NLS arises naturally from
the classical local well-posedness theory. In general, a contraction argument
constructs solutions in a suitable auxiliary space, which
combines time continuity in a Sobolev space with additional
space-time integrability. Although such an argument gives
uniqueness within the auxiliary space, it does not necessarily
rule out another distributional solution with the same initial
data, that is only known to belong to the natural Sobolev class.
Unconditional uniqueness aims to remove these auxiliary
assumptions and establish uniqueness in the Sobolev class
itself.

An important motivation for studying this property is to
identify solutions obtained by different constructions.
Indeed, solutions arising from contraction schemes,
compactness arguments or other methods may not be known
to satisfy the additional space-time bounds required by
the classical local theory. Unconditional uniqueness allows
such solutions to be identified without separately verifying
these bounds. This issue is also relevant to variational
and control problems, where the natural function space
for constructing solutions need not coincide with the
auxiliary space used in the evolution theory; see e.g. \cite{LugiatoLefever1987} and also
the discussion in \cite{uncond2}.

A systematic study of unconditional uniqueness for NLS
might at least date back to Kato \cite{Kato1995}. The main idea of
Kato's approach is to consider the difference of two
solutions with the same initial data and establish a
short-time bootstrap estimate. Since the difference has
zero initial data, its Duhamel representation contains
no free evolution term. One then seeks to estimate the
nonlinear difference by a small multiple of the norm
of the difference itself, so that uniqueness follows
by absorption. A crucial requirement is that the norm
used in this argument must already be finite under
the assumed Sobolev regularity, rather than imposed
as an additional condition on the solutions.

At low regularity, however, Kato's approach encounters a
fundamental difficulty arising from the mismatch between
Sobolev embedding and Strichartz estimates. To overcome
this difficulty, Furioli and Terraneo
\cite{FurioliTerraneo2003} developed Kato's bootstrap
argument by estimating the difference of two solutions
in Besov spaces of negative regularity. The main idea
is to trade spatial differentiability for integrability,
thereby allowing the nonlinear estimates to be matched
with the available Strichartz estimates. This approach
significantly extends the range of regularities for
which unconditional uniqueness can be established.
For further results in this direction, we refer to
\cite{HanFang2013,Rogers2007,LuXu2016}. 

For the cubic NLS \eqref{cnls} on $\R^3$, it is also worth pointing out that the result of Furioli and Terraneo
\cite{FurioliTerraneo2003} already yields unconditional uniqueness for every $1/2<s<1$, while the scaling-critical
case $s=1/2$ had remained open. This open endpoint was recently resolved in our work \cite{LuoZheng2026}, by appealing to several novel tools such as  
the endpoint critical Besov space $\dot B^{-1}_{\infty,\infty}(\R^3)$ and suitable Born-type Duhamel expansion.

\subsection{New challenges and methods in the periodic setting}
In previous works, the unconditional uniqueness problem was mainly studied on the Euclidean space $\R^d$. The problem becomes substantially more delicate and difficult when the underlying domain is replaced by a torus $\T^d$ or more generally a compact manifold. Unlike the Euclidean setting,
where the free Schr\"odinger propagator enjoys strong
dispersive decay, the NLS-flow on compact manifolds does usually not admit a
global-in-time dispersive estimate.
Consequently, Strichartz estimates on compact manifolds are typically formulated in term of
frequency-localized functions or eigenfunctions, see e.g. \cite{Bourgain1,l2decoupling,Burq1,Burq2,Burq3} and the references therein. These restrictions thus make it rather difficult to directly adapt the Euclidean approaches to the compact settings.

To overcome these difficulties, several approaches
have been developed by exploiting additional structures
of the NLS beyond the usual Strichartz estimates.
One such approach is based on the Gross--Pitaevskii
(GP) hierarchy, which arises naturally in the study
of quantum many-body systems. The key observation is
that a solution of the NLS generates a sequence
of factorized density matrices satisfying the
corresponding GP-hierarchy, and therefore uniqueness for this
infinite system can then be used to establish
uniqueness for the original NLS. The hierarchy
formulation allows one to iteratively expand the
interaction terms through Duhamel's formula and
organize the resulting multilinear expressions
using suitable combinatorial and analytic estimates. In this direction, Herr and Sohinger \cite{HerrSohinger2019} used for the first time the GP-hierarchy framework 
to establish unconditional uniqueness results for the cubic NLS on rectangular
tori. As a consequence, it is proved in \cite{HerrSohinger2019} that a solution of \eqref{cnls} is unconditionally unique in $H^{s}(\T^3)$ for any $s>4/5$.

A significant further advance was made by Chen and Holmer
\cite{uncond1}, who established unconditional uniqueness
for the energy-critical quintic NLS in $H^1(\T^3)$,
without any smallness assumption on the solutions.
The key ingredient of their argument is a new
hierarchical uniform frequency localization (HUFL)
property for the GP-hierarchy, which allows
one to exploit the uniform smallness of the high-frequency
part of a solution in the iterated Duhamel estimates,
thereby providing the smallness needed to close the
uniqueness argument at critical regularity.
In a subsequent work \cite{uncond2}, Chen and Holmer
further resolved the unconditional uniqueness problem
for the energy-critical cubic NLS in $H^1(\T^4)$.
An additional difficulty arises in this setting,
due to the fact that the Sobolev multilinear estimates are no longer sufficient.
Suitable $U^p$-$V^p$ multilinear estimates, together with an
extended Klainerman--Machedon boardgame argument, are exploited to overcome the new difficulties. We also refer to \cite{uncond3} for unconditional uniqueness results for energy-supercritical NLS.

Another important approach is the infinite normal
form reduction, which exploits the oscillatory
structure of the nonlinear interactions in
Fourier space. Instead of estimating the
nonlinearity directly through Strichartz estimates, which are generally incompatible with NLS of low critical scaling,
the crucial point of the normal form method lies in the fact, that higher order expansion of the nonlinearity can be obtained via repeatedly integration by parts in the nonresonant interactions. The resulting
inverse modulation factors hence provide additional
control, while the resonant terms are treated
separately. In this direction, we refer e.g. to \cite{GuoKwonOh2013,Kishimoto2021} and the references therein.

It is worth pointing out that Theorem \ref{main thm} is not covered by the previous results. The main difficulty is that for models of low critical regularity, such as the critical scaling $s=1/2$ considered in the present paper, the
periodic Strichartz estimates are still too restrictive
to provide sufficient space-time estimates. Overcoming this difficulty requires a new way
to recover additional space-time integrability
directly from the equation, which we elaborate in more details in the next subsection.

\subsection{Strategy for proving the main result}

We now explain the main ideas behind the proof of
Theorem \ref{main thm}. Unlike the GP-hierarchy and
normal form approaches discussed above, our argument
does not rely on higher-order expansions of the
nonlinear interactions. Instead, we work directly
with the original cubic NLS and exploit the structure
of its Hamiltonian flow. The main idea is to show that
every distributional solution in $C(I;H^{1/2}(\T^3))$
possesses sufficient additional space-time integrability
to be identified with the solution constructed by
the critical local well-posedness theory. The uniqueness
then follows from the uniqueness of the latter solution.

The first and most important step is to establish the
\textit{a priori} estimate
\[
u\in L_{t,\mathrm{loc}}^2L_x^6
\]
for every distributional solution
$u\in C_t H_x^{1/2}$.
This estimate is not a direct consequence of the
usual periodic Strichartz theory. Indeed, the key point is to exploit a delicate decomposition according
to the modulation relative to the characteristic
surface $\tau+|n|^2=0$. More precisely, at spatial
frequency $|n|\sim N$, we separate the low-modulation
region $|\tau+|n|^2|\lesssim N$ from its complement. The two regions will then be treated by different approaches.

For the low-modulation part, we shall make use of the
frequency-localized Strichartz estimate introduced by Burq, G\'{e}rard and Tzvetkov \cite{Burq1} on time
intervals of length $O(N^{-1})$. In particular, the short-time Strichartz estimates of \cite{Burq1}
allow us to work with admissible pairs having smaller time integrability
exponents, such as $(2,6)$, which lie beyond the range covered by the
usual Bourgain-type Strichartz estimates on $\T^3$ \cite{Bourgain1}.
Together with Littlewood--Paley square-function
estimates, this gives the desired $L_t^2L_x^6$
bound at the cost of only half a spatial derivative.

For the high-modulation part, the equation can be
inverted away from the characteristic surface,
giving rise to a Fourier multiplier containing
the factor $(\tau+|n|^2)^{-1}$.
We establish an $L_t^2L_x^{6/5}$ to $L_t^2L_x^6$
estimate for this operator by combining Euclidean
endpoint Strichartz estimates with a suitable
periodization argument. 

We note here that the compactness of the solution trajectories also play a crucial role. Indeed, since
$u\in C(I;H^{1/2}(\T^3))$, the set
$\{u(t):t\in J\}$ is compact in $H^{1/2}(\T^3)$
for every compact interval $J\subset I$.
Consequently, given any $\varepsilon>0$, one
can choose a frequency cutoff $K$ such that
\[
\sup_{t\in J}
\|(I-P_{\le K})u(t)\|_{H_x^{1/2}}
<\varepsilon.
\]
We then write
\[
|u|^2
=
|P_{\le K}u|^2
+
\bigl(|u|^2-|P_{\le K}u|^2\bigr).
\]
The low-frequency coefficient is bounded in
$L_{t,x}^\infty$, while the second term is
arbitrarily small in $L_t^\infty L_x^{3/2}$.
Combining this decomposition with the
high-modulation estimate, we obtain a fixed-point
equation in which the contribution of the
high-frequency coefficient defines a contraction
on $L_t^2L_x^6$.

Nevertheless, it still remains a subtle point to be handled in this argument.
The contraction only produces a
solution belonging to the space $L_t^2L_x^6$, whereas the
original distributional solution is initially
known merely to lie in $C_tH_x^{1/2}$.
We therefore can not identify these two solutions
by the strong estimate alone, since this would
already require the additional integrability
we are trying to prove. It is at this point where the Lorentz-space
estimates enter our argument. Indeed, 
we establish a
second contraction in the weaker space characterized
by
\[
(1-\Delta)^{-1/5}u
\in L_t^\infty L_x^{5,\infty},
\]
which is already controlled by the assumed
$L_t^\infty H_x^{1/2}$ regularity. The resulting contraction in the Lorentz class then identifies the original
distributional solution with the fixed point
constructed in $L_t^2L_x^6$, thereby establishing
the desired \textit{a priori} space-time integrability. At the end, we shall also make use of the preservation of the canonical symplectic form, given as 
\[
\omega(g_1,g_2)
=\operatorname{Im}\int_{\T^3}
g_1\overline{g_2}\,dx,\qquad\omega\bigl(
D_f\Phi_{L,t}(f)g_1,
D_f\Phi_{L,t}(f)g_2
\bigr)
=
\omega(g_1,g_2),
\]
to deduce the useful uniform bound
\[
\|D_f\Phi_{L,t}(f)\|_{H^{-1/2}\to H^{-1/2}}
\lesssim1
\]
which closes our desired argument in the rough Sobolev space $H_x^{-1/2}$.

\subsubsection*{Outline of the paper} 
The rest of the paper is organized as follows. In Section \ref{sec2} we show that the proof of Theorem \ref{main thm} can be reduced to proving two key Propositions \ref{prop 2.1} and \ref{prop 2.2}. The proofs of the key propositions will then be given in Section \ref{sec3} and Section \ref{sec4} respectively. 

\subsection{Notation}
We use the notation $A\lesssim B$ whenever there exists some positive constant $C$ such that $A\leq CB$. Similarly we define $A\gtrsim B$ and use $A\sim B$ when $A\lesssim B\lesssim A$. We use notation such as $P_N$ or $P_{\leq L}$ to denote the standard Littlewood-Paley projection for dyadic numbers $N$ and $L$. We use $L^p$, $L^{p,q}$ and $H^s$ to denote the usual Lebesgue, Lorentz and Sobolev spaces, respectively. 

\subsection{AI disclosure}
The authors used GPT-6 Astra (OpenAI) for assistance in mathematical reasoning, the initial development of the proof, and manuscript preparation. However, a first AI-assisted version of drafts and proofs required extensive revision by the authors, both mathematically and in its presentation.

One major difficulty with the AI-generated steps was its exposition. It frequently provided lengthy and repetitive explanations of elementary arguments, while certain nontrivial steps were treated too briefly or without sufficient justification. The authors therefore substantially reorganized and rewrote the proof, removing unnecessary discussions of routine arguments and supplying detailed justifications for steps requiring genuine mathematical analysis.

A more substantive mathematical revision concerned the choice of function space for the contraction argument. The initial AI-assisted proof was formulated primarily in the \(L_{t,x}^4\) norm. This approach would have required additional analytical tools, including local smoothing estimates of Staffilani and Tataru \cite{StaffTataru}, the endpoint inhomogeneous Strichartz estimates of Koh and Seo \cite{KohSeo}, and Khintchine's inequality. The authors subsequently recognized that the contraction argument could instead be formulated in the \(L_t^2L_x^6\) norm. This observation eliminated the need for these additional tools and led to a considerably simpler and more direct proof.

Throughout the preparation of the manuscript, all AI-generated mathematical arguments were critically examined and independently verified by the authors. The authors made the necessary mathematical corrections, developed the aforementioned simplification, and substantially revised the exposition. They take full responsibility for the correctness, originality, and final content of the manuscript.

\section{Proof of Theorem \ref{main thm}}\label{sec2}
In this section, we give the proof of Theorem \ref{main thm}. More precisely, we show that it suffices to establish two key propositions, which we state below.

\begin{proposition}\label{prop 2.1}
Let $I\subset \R$ be a nonempty interval and suppose that $u\in C(I; H^{1/2}(\T^3))$ is a distributional solution of \eqref{cnls}. Then for any compact $J\subset I$ it holds
$
u\in L_t^3 L_x^{9/2}(J).
$
\end{proposition}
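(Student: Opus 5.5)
The strategy is to obtain $L_t^3L_x^{9/2}$ by interpolation from the $L_t^2L_x^6$ bound described in the introduction. Since $\frac13=\frac{2/3}{2}+\frac{1/3}{\infty}$ and $\frac{2}{9}=\frac{2/3}{6}+\frac{1/3}{3}$, Hölder gives
\[
\|u\|_{L_t^3L_x^{9/2}(J)}\le \|u\|_{L_t^2L_x^6(J)}^{2/3}\|u\|_{L_t^\infty L_x^3(J)}^{1/3}.
\]
The factor $L_t^\infty L_x^3$ is immediate from $u\in C(J;H^{1/2})$ and the Sobolev embedding $H^{1/2}(\T^3)\subset L^3$. So the proposition reduces to proving $u\in L^2_tL^6_x(J)$ for every compact $J\subset I$. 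By covering $J$ with finitely many short intervals, it suffices to do this on intervals $J_0$ of small length $T$. The length $T$ is allowed to depend on $J$ through the compactness of $\{u(t):t\in J\}$.

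To prove the $L^2_tL^6_x$ bound, fix $\varepsilon>0$. By compactness, choose $K$ with $\sup_{t\in J}\|P_{>K}u(t)\|_{H^{1/2}}<\varepsilon$. Localize in time with a smooth cutoff $\chi$ adapted to $J_0$ and work with $w=\chi u$, which satisfies $(i\pt_t+\Delta)w=\mu\chi|u|^2u+i\chi'u$.

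Next split in space-time Fourier variables at frequency $|n|\sim N$.
\begin{itemize}
\item \emph{Low modulation}, $|\tau+|n|^2|\lesssim N$. Here the piece is essentially a superposition of free waves over time intervals of length $N^{-1}$. The Burq--Gérard--Tzvetkov short-time Strichartz estimate in the pair $(2,6)$ then costs $N^{1/2}$. Summing the intervals and using the Littlewood--Paley square function, this part is bounded by $\|u\|_{L^\infty_tH^{1/2}_x}$.
\item \emph{High modulation}. Here I write $w_{hi}=\mathcal{T}(F)$ with $\mathcal{T}$ the multiplier $(\tau+|n|^2)^{-1}\mathbf 1_{|\tau+|n|^2|\gtrsim N}$ and $F$ the right-hand side. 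I would establish $\|\mathcal{T}F\|_{L^2_tL^6_x}\lesssim\|F\|_{L^2_tL^{6/5}_x}$ by periodizing the Euclidean endpoint (Keel--Tao) inhomogeneous estimate; on $[0,T]\times\T^3$ this is harmless locally in time.
\end{itemize}

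For the nonlinear term, write $|u|^2u=|P_{\le K}u|^2u+(|u|^2-|P_{\le K}u|^2)u$. The first coefficient is bounded in $L^\infty_{t,x}$ by $CK^{3}$, so its contribution is at most $CK^3T^{1/2}\|u\|_{L^2_tL^6_x}$, which is small once $T$ is chosen depending on $K$. The second coefficient is $O(\varepsilon)$ in $L^\infty_tL^{3/2}_x$, and Hölder gives $L^{3/2}\cdot L^6\subset L^{6/5}$. Hence $u$ satisfies a fixed-point equation $u=A+\mathcal{L}u$, where $A$ collects the low-modulation and linear pieces and lies in $L^2L^6$, and $\mathcal{L}$ is a contraction on $L^2_tL^6_x(J_0)$. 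This yields a fixed point $\tilde u\in L^2_tL^6_x$.

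The main obstacle is identifying $u$ with $\tilde u$, since a priori $u$ is not known to lie in $L^2L^6$. My plan is to run the same contraction in the weaker norm $\|(1-\Delta)^{-1/5}\cdot\|_{L^\infty_tL^{5,\infty}_x}$, which is finite for $u$ by Sobolev embedding in Lorentz spaces. For this I need the corresponding mapping property of $\mathcal{T}$, and of the low-frequency multiplication, between Lorentz-type spaces, again obtained via Euclidean estimates and periodization. Uniqueness of the fixed point in this larger class then forces $u=\tilde u\in L^2_tL^6_x(J_0)$, and the interpolation above concludes the proof.
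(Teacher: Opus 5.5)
Your overall architecture is the paper's: interpolation against $L^\infty_tL^3_x$, the low/high modulation splitting, Burq--G\'erard--Tzvetkov on intervals of length $N^{-1}$, periodized Keel--Tao, the compactness splitting of $|u|^2$, and identification via a contraction in $\|(1-\Delta)^{-1/5}\cdot\|_{L^\infty_tL^{5,\infty}_x}$. However, your treatment of $|P_{\le K}u|^2u$ leaves a genuine gap.

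You put this term inside the iteration map and claim a factor $T^{1/2}$ from shortening the interval. With the high-modulation estimate you state, $L^2_tL^{6/5}_x\to L^2_tL^6_x$ for your $\mathcal{T}$ (the paper's $\mathcal{H}$), the time exponents on both sides agree. So H\"older only gives $\||P_{\le K}u|^2f\|_{L^2_tL^{6/5}_x}\le C_K\|f\|_{L^2_tL^6_x}$, with no power of $T$; a gain would need a different dual pair, e.g.\ $L^1_tL^2_x$. More seriously, your identification step requires the same map to contract in the weak norm, and there both sides are $L^\infty_t$. The weak high-modulation bound $L^\infty_tL^{15/13,\infty}_x\to L^\infty_tL^{5,\infty}_x$ together with the Lorentz product rule controls $f\mapsto\mathcal{H}(1_{J_0}|P_{\le K}u|^2f)$ only by $\sim R^2$, uniformly in $|J_0|$, which is not small. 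Getting smallness there would need a further estimate exploiting the smoothness of $P_{\le K}u$, and you do not provide one.

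The missing observation is that this term need not be iterated at all. Since $u\in L^\infty_tL^3_x$ and $P_{\le K}u\in L^\infty_{t,x}$, one has $1_J|P_{\le K}u|^2u\in L^\infty_tL^3_x\subset L^2_tL^{6/5}_x$ on a bounded interval. Its $\mathcal{H}$-image therefore already lies in $L^2_tL^6_x$. The paper puts it into the fixed forcing term $G$, together with the low-modulation part of $w$ and the cutoff term $i\eta'U$. The iteration operator $Sf=\mu\mathcal{H}[1_J(|u|^2-|P_{\le K}u|^2)f]$ then carries only the $\varepsilon$-small potential. It contracts in $L^2_tL^6_x$ by H\"older and in the weak norm by Lemmas~\ref{lem 3.4} and~\ref{lem 3.5}, with smallness coming from $\sup_{t\in J}\|P_{>K}u(t)\|_{H^{1/2}}$, so no shortening of the interval is needed.

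Note also that ``uniqueness of the fixed point in the larger class'' presupposes that $\tilde u$ lies in that class, which is unclear for an $L^2_t$-based fixed point. The paper instead runs the Picard iteration from $G$, which is finite in both norms (in the weak one because $G=w-Sw$). The iterates then converge to $\tilde u$ in $L^2_tL^6_x$ and to $w$ in the weak norm, which forces $\tilde u=w$ as distributions.
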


\begin{proposition}\label{prop 2.2}
Let $\mathcal K \subset H^{1/2}(\mathbb T^3)$ be compact.
Then there exist an open neighborhood $\mathcal U$ of
$\mathcal K$ in $H^{1/2}(\mathbb T^3)$, a time $h>0$,
and a constant $C<\infty$ such that, for every
$L\in\mathbb N\cup\{\infty\}$, the Cauchy problem
\begin{equation}
\begin{cases}
(i\partial_t+\Delta)z
=\mu P_{\le L}\bigl(
|P_{\le L}z|^2P_{\le L}z
\bigr),\\
z(0)=f\in\mathcal U,
\end{cases}
\label{2.2}
\end{equation}
admits a solution
$\Phi_{L,\cdot}(f)\in
C([-h,h];H^{1/2}(\mathbb T^3))$
constructed by the critical local well-posedness theory.
Moreover, the following properties hold:

\begin{itemize}
\item The flow map
$f\mapsto\Phi_{L,t}(f)$ defines a real $C^1$ map on $H^{\frac12}(\T^3)$ and the functional derivative $D_f\Phi_{L,t}(f)$ satisfies
\begin{equation}
\begin{aligned}
\sup_{|t|\leq h,\,f\in \mathcal{U},\,L\in\N}\left(\|D_f\Phi_{L,t}(f)\|_{H_x^{1/2}\to H_x^{1/2}}+
\|[D_f\Phi_{L,t}(f)]^{-1}\|_{H_x^{1/2}\to H_x^{1/2}}\right)
\lesssim 1.
\end{aligned}
\label{2.3}
\end{equation}

\item
The truncated flow $\Phi_{L,t}$ converges to the original solution flow $\Phi_t$ associated with \eqref{cnls} in the sense that
\begin{equation}
\sup_{\substack{|t|\le h,\,f\in\mathcal K}}
\|\Phi_{L,t}(f)-\Phi_t(f)\|_{H_x^{1/2}}
\longrightarrow 0
\qquad\text{as }L\to\infty.
\label{2.5}
\end{equation}

\item 
The functional derivative
$D_f\Phi_{L,t}(f)$ extends to a bounded
real-linear operator on $H^{-1/2}(\mathbb T^3)$ and satisfies 
\begin{equation}
\sup_{|t|\leq h,\,f\in \mathcal{U},\,L\in\N}\|D_f\Phi_{L,t}(f)\|_{H_x^{-1/2}\to H_x^{-1/2}}
\lesssim 1.
\label{2.4}
\end{equation}
\end{itemize}
\end{proposition}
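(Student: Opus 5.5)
We build $\Phi_{L,t}$ by a contraction argument that is uniform in $L$. The working space is a critical Bourgain-type space $X^{1/2}([-h,h])$, for instance the $U^2/V^2$-based space $X^{1/2}$ of Herr--Tataru--Tzvetkov adapted to $\T^3$. In this space the trilinear estimate
\[
\Bigl\|\int_0^t e^{i(t-s)\Delta}\bigl(u_1\bar u_2u_3\bigr)(s)\,ds\Bigr\|_{X^{1/2}}\lesssim \prod_{j}\|u_j\|_{X^{1/2}}
\]
holds, and it remains true uniformly in $L$ when $P_{\le L}$ is inserted, since $P_{\le L}$ is bounded on every space involved. Smallness cannot come from the data, because elements of $\mathcal K$ may be large in $H^{1/2}$. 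We obtain it from compactness instead: $\mathcal K$ is compact in $H^{1/2}$, so there is a single $K$ with $\sup_{f\in\mathcal K}\|P_{>K}f\|_{H^{1/2}}<\varepsilon$. Taking $\mathcal U$ to be an $\varepsilon$-neighborhood of $\mathcal K$, this bound holds on $\mathcal U$ with $2\varepsilon$. The linear evolution $e^{it\Delta}P_{\le K}f$ is smooth, so $\|e^{it\Delta}f\|_{L^4_{t,x}\text{-type}([-h,h])}$ is small once $h=h(K,\varepsilon)$ is small. With this we run the standard large-data critical argument: we contract in a ball around the linear flow and use the refined trilinear estimate in which one factor is measured in a weaker, Strichartz-type norm that is small. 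The outcome is existence and uniqueness of $\Phi_{L,\cdot}(f)$, with $h,C$ independent of $L\in\N\cup\{\infty\}$ and of $f\in\mathcal U$.

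Differentiability follows from the analytic dependence of the fixed point on $f$ via the implicit function theorem, since the nonlinearity is real-trilinear. The derivative $w=D_f\Phi_{L,t}(f)g$ solves the linearized equation
\[
(i\pt_t+\Delta)w=\mu P_{\le L}\bigl(2|z_L|^2w_L+z_L^2\bar w_L\bigr),\qquad z_L=P_{\le L}z,\quad w_L=P_{\le L}w.
\]
The same trilinear estimate, with two factors $z$ small in the auxiliary norm, gives $\|w\|_{X^{1/2}}\lesssim\|g\|_{H^{1/2}}$. The inverse satisfies the same bound because the flow is reversible: $[D_f\Phi_{L,t}(f)]^{-1}=D\Phi_{L,-t}(\Phi_{L,t}(f))$. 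Here one should choose $\mathcal U$ and $h$ so that $\Phi_{L,t}(\mathcal U)$ stays inside a slightly larger neighborhood where the same theory applies, e.g.\ by doing the construction on a $2\varepsilon$-neighborhood. This proves \eqref{2.3}. For \eqref{2.5}, take the difference $z_L-z_\infty$. It satisfies an equation whose forcing contains the commutator terms $(1-P_{\le L})$ applied to $z_\infty$ or to $|z_\infty|^2z_\infty$. Since $\{\Phi_t(f):f\in\mathcal K,|t|\le h\}$ is the continuous image of a compact set in $C_tH^{1/2}$ and in $X^{1/2}$, these terms tend to $0$ uniformly, and the contraction estimate then gives uniform convergence.

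The main obstacle is \eqref{2.4}, since $H^{-1/2}$ lies below the range where the trilinear estimate works. We use duality through the symplectic form, as the introduction suggests. The truncated equation is Hamiltonian for $\omega$ with Hamiltonian $\frac12\|\nabla z\|^2+\frac\mu4\|P_{\le L}z\|_4^4$. Its linearization is therefore a real-linear symplectic map:
\[
\omega(Ag_1,Ag_2)=\omega(g_1,g_2),\qquad A=D_f\Phi_{L,t}(f).
\]
Writing $\omega(g_1,g_2)=\langle g_1,Jg_2\rangle_{L^2,\R}$ with $J$ multiplication by $-i$, this says $A^{T}JA=J$, where $A^T$ is the real $L^2$-transpose. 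Hence $A^{-1}=J^{-1}A^{T}J$. The real $L^2$ pairing identifies $H^{-1/2}$ with the dual of $H^{1/2}$, and $J$ is an isometry on every $H^s$. It follows that $\|A^{T}\|_{H^{-1/2}\to H^{-1/2}}=\|A\|_{H^{1/2}\to H^{1/2}}$, and therefore
\[
\|A^{-1}\|_{H^{-1/2}\to H^{-1/2}}=\|A\|_{H^{1/2}\to H^{1/2}}\lesssim1 .
\]
Applying this with $A$ replaced by $A^{-1}=D\Phi_{L,-t}(\Phi_{L,t}f)$ gives the bound for $A$ itself on $H^{-1/2}$, via \eqref{2.3} for the inverse. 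Density of $H^{1/2}$ in $H^{-1/2}$ then yields the bounded extension.

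The points to check carefully are the following. First, the symplectic identity must hold exactly for each finite $L$; we verify it by differentiating $\omega(w_1(t),w_2(t))$ in $t$ and using that the linearized operator is $J$ times a self-adjoint operator, with $P_{\le L}$ self-adjoint. Second, the bounds must stay uniform along the orbit $\Phi_{L,t}(\mathcal U)$, which is where the enlarged neighborhood is needed.
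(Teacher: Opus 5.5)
Your proposal is correct and takes essentially the paper's route: the existence, \eqref{2.3} and \eqref{2.5} come from the standard uniform-in-$L$ critical local theory, which the paper only cites and you sketch (including the reversibility argument for the inverse bound), and \eqref{2.4} is derived from preservation of $\omega$ by duality. Your identity $A=J^{-1}(A^{-1})^{T}J$ is exactly the paper's relation $\omega(Ag_1,g_2)=\omega(g_1,A^{-1}g_2)$ combined with $\|h\|_{H^{-1/2}}=\sup_{\|g\|_{H^{1/2}}\le 1}|\omega(h,g)|$, and the sign of $J$ is immaterial (with $\langle g_1,g_2\rangle=\operatorname{Re}\int g_1\overline{g_2}$ it is multiplication by $i$, not $-i$).
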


\begin{remark}
It is worth pointing out that \eqref{2.3} and \eqref{2.4} hold for any $f\in\mathcal U$, while in \eqref{2.5} we shall merely assume $f\in \mathcal K$. The reason is that the former estimates are stable under small perturbation (hence extend to hold on a neighborhood), whereas the convergence in \eqref{2.5} is generally only uniform on compact sets.
\end{remark}

With Proposition \ref{prop 2.1} and \ref{prop 2.2} in place, we give the proof of Theorem \ref{main thm}.

\begin{proof}[Proof of Theorem \ref{main thm}]
Let $u\in C(I;H_x^{1/2})$ be a distributional solution of \eqref{cnls} and $J=[a,b]\subset I$ be a compact interval. We first show that
$$u(t)=\Phi_{t-s}(u(s))$$ 
whenever $s,t\in J$ are sufficiently close. Define the set
\[
\mathcal K
=\overline{\bigl\{P_{\le L}u(r):
r\in J,\ L\in\mathbb N\cup\{\infty\}\bigr\}}^{\,H_x^{1/2}}\subset H_x^{1/2}.
\]
Thanks to the continuity of $u$ in time, the set $\mathcal K$ is compact in $H_x^{1/2}$. We then apply
Proposition \ref{prop 2.2} to the set $\mathcal K$ and denote by $h$ the length number of the time interval given by Proposition \ref{prop 2.2}.

To proceed, we may w.l.o.g. assume $s\le t$ with $t-s\le h$. By calculating $\pt_\tau\Phi_{L,\tau}(f)$ in the way
\begin{align*}
\pt_\tau\Phi_{L,\tau}(f)&=\left.\frac{\partial}{\partial\varepsilon}\Phi_{L,\tau+\varepsilon}(f)\right|_{\varepsilon=0}
=\left.\frac{\partial}{\partial\varepsilon}\left(\Phi_{L,\tau}(\Phi_{L,\vare}(f))\right)\right|_{\varepsilon=0}
=D_f\Phi_{L,\tau}(f)\cdot\left.\partial_t\Phi_{L,t}(f)\right|_{t=0}
\end{align*}
using chain rule, we arrive, in conjunction with \eqref{2.2}, at
\[
\partial_\tau\Phi_{L,\tau}(f)
=D_f\Phi_{L,\tau}(f)
\bigl[
i\Delta f-i\mu P_{\le L}
(|P_{\le L}f|^2P_{\le L}f)
\bigr]
\]
for given initial data $f$. Hence
\begin{equation*}
\begin{aligned}
&\frac{d}{dr}
\Phi_{L,t-r}(P_{\le L}u(r))\\
&\quad=-\pt_\tau \Phi_{L,t-r}(P_{\le L}u(r))+D_f\Phi_{L,t-r}(P_{\le L}u(r))\cdot \pt_r P_{\leq L}u(r)\\
&\quad=D_f\Phi_{L,t-r}(P_{\leq L}u(r))
\bigl[
i\mu P_{\le L}
(|P^2_{\le L}u(r)|^2P^2_{\le L}u(r))
-i\Delta P_{\le L}u(r)\bigr]\\
&\quad\quad+D_f\Phi_{L,t-r}(P_{\le L}u(r))\bigr[i\Delta P_{\leq L}u-i\mu P_{\leq L}(|u|^2 u)\bigr]
\\
&\quad=-i\mu D_f\Phi_{L,t-r}(P_{\le L}u(r))
\Bigl[
P_{\le L}\bigl(
|u(r)|^2u(r)
-|P_{\le L}^2u(r)|^2P_{\le L}^2u(r)
\bigr)
\Bigr].
\end{aligned}
\end{equation*}
Integrating from $s$ to $t$
and using \eqref{2.4}, we conclude that
\begin{equation*}
\begin{aligned}
&\|P_{\le L}u(t)
-\Phi_{L,t-s}(P_{\le L}u(s))\|_{H_x^{-1/2}}\\
&\quad\lesssim \int_s^t
\bigl\|P_{\le L}\bigl(
|u(r)|^2u(r)
-|P_{\le L}^2u(r)|^2P_{\le L}^2u(r)
\bigr)\bigr\|_{H_x^{-1/2}}\,dr.
\end{aligned}
\end{equation*}
Recall from Proposition~\ref{prop 2.1} that $u\in L^3_tL^{9/2}_x(J)$. The dominated convergence theorem therefore yields
$P_{\le L}^2u\to u$ in $L^3_tL^{9/2}_x(J)$ as $L\to\infty$. On the other hand, telescoping and
H\"older give
\[
\begin{aligned}
\||v|^2v-|u|^2u\|_{L^1_tL^{3/2}_x(J)}
\lesssim
\bigl(
\|u\|_{L^3_tL^{9/2}_x(J)}^2
+\|v\|_{L^3_tL^{9/2}_x(J)}^2
\bigr)
\|v-u\|_{L^3_tL^{9/2}_x(J)}.
\end{aligned}
\]
Taking $v=P_{\le L}^2u$ and using the embedding
$L_x^{3/2}\hookrightarrow H_x^{-1/2}$, together
with the uniform (in $L$) boundedness of $P_{\le L}$ on
$H_x^{-1/2}$, we obtain
\begin{equation*}
\begin{aligned}
&\|P_{\le L}u(t)
-\Phi_{L,t-s}(P_{\le L}u(s))\|_{H_x^{-1/2}}\\
&\quad\lesssim 
\bigl\|P_{\le L}\bigl(
|u|^2u-|P_{\le L}^2u|^2P_{\le L}^2u
\bigr)\bigr\|_{L^1_tH^{-1/2}_x(J)}
\longrightarrow0\qquad\text{as $L\to\infty$.}
\end{aligned}
\end{equation*}
Next, using telescoping, \eqref{2.3} and \eqref{2.5} it follows
\begin{align*}
\begin{aligned}
&\|\Phi_{L,t-s}(P_{\le L}u(s))
-\Phi_{t-s}(u(s))\|_{H_x^{1/2}}\\
&\quad \leq \|\Phi_{L,t-s}(P_{\le L}u(s))-\Phi_{L,t-s}(u(s))\|_{H_x^{1/2}}+\|\Phi_{L,t-s}(u(s))
-\Phi_{t-s}(u(s))\|_{H_x^{1/2}}\\
&\quad \lesssim C\|P_{\le L}u(s)-u(s)\|_{H_x^{1/2}}+\sup_{\substack{|\tau|\le h,\,f\in\mathcal K}}
\|\Phi_{L,\tau}(f)-\Phi_\tau(f)\|_{H_x^{1/2}}
\longrightarrow0
\end{aligned}
\end{align*}
as $L\to\infty$. Combing with $P_{\leq L}u(t)\to u(t)$ in $H_{x}^{1/2}\hookrightarrow H_x^{-1/2}$ as $L\to\infty$ we conclude that
\begin{equation*}
u(t)=\Phi_{t-s}(u(s)),
\qquad 0\le t-s\le h
\end{equation*}
in $H_{x}^{-1/2}$. Since both sides belong to $H_x^{1/2}$, the identity also holds in $H_x^{1/2}$. The argument for negative time direction is identical. 

Let now $u,v\in C(I;H_x^{1/2})$ with $u(t_0)=v(t_0)$ for some $t_0\in I$. Consider the set 
\[I':=\{t\in I:\,u(t)=v(t)\}.\]
By the continuity of $u$ and $v$, we may invoke the previous argument to conclude that $I'$ is nonempty and must be open and closed simultaneously. Since $I$ is connected, this implies $I'=I$, as desired.
\end{proof}

\section{Proof of Proposition \ref{prop 2.1}}\label{sec3}
In this section we give the proof of Proposition \ref{prop 2.1}. Indeed, we will show that every distributional solution \(u\in C(I;H_x^{1/2})\) to \eqref{cnls} necessarily belongs to \(L_t^2L_x^6(J)\) for every compact subinterval \(J\subset I\), as stated in the following proposition. 

\begin{proposition}\label{prop 3.1}
Let $I\subset \R$ be a nonempty interval and suppose that $u\in C(I; H^{1/2}(\T^3))$ is a distributional solution of \eqref{cnls}. Then for any compact $J\subset I$ it holds
$
u\in L_t^2 L_x^{6}(J).
$
\end{proposition}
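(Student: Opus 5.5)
We follow the strategy sketched in the introduction: split $u$ according to modulation, and recover the missing integrability through two contraction arguments. Fix a compact $J\subset I$. By translation and a covering argument it suffices to work on a short interval $J_0=[t_1,t_1+\delta]\subset J$, where $\delta$ depends only on the compact set $\{u(t):t\in J\}\subset H^{1/2}$. Let $\chi$ be a smooth cutoff equal to $1$ on $J_0$, supported in $J$, and set $w=\chi u$. Then
\[
(i\partial_t+\Delta)w=\chi\mu|u|^2u+i\chi' u=:F .
\]
For each dyadic $N$, take the space-time Fourier transform of $P_Nw$ and split it into the low-modulation piece $|\tau+|n|^2|\lesssim N$ and the high-modulation piece, writing $P_Nw=w_N^{\rm lo}+w_N^{\rm hi}$.

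\textbf{Low-modulation piece.} Write $w_N^{\rm lo}$ as a superposition of free waves,
\[
w_N^{\rm lo}(t)=\int_{|\sigma|\lesssim N}e^{it\sigma}e^{it\Delta}g_\sigma\,d\sigma .
\]
Then localize in time to intervals of length $N^{-1}$; on each such interval $e^{i t\sigma}$ is essentially constant. Apply the Burq--G\'erard--Tzvetkov short-time estimate $\|e^{it\Delta}P_Nf\|_{L^2_tL^6_x(|t|\lesssim N^{-1})}\lesssim \|f\|_{L^2}$ and sum over the $\sim N|J|$ intervals. This gives
\[
\|w_N^{\rm lo}\|_{L^2_tL^6_x}\lesssim N^{1/2}\|P_Nw\|_{L^\infty_tL^2_x},
\]
up to harmless Schwartz tails of the modulation cutoff, which cost nothing since $u\in L^\infty_tH^{1/2}$. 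Squaring, summing in $N$ and using Littlewood--Paley square-function estimates in $L^6_x$ (via Minkowski, since $6\ge2$) yields
\[
\Bigl\|\Bigl(\sum_N|w_N^{\rm lo}|^2\Bigr)^{1/2}\Bigr\|_{L^2_tL^6_x}\lesssim\|u\|_{L^\infty_tH^{1/2}_x}.
\]
This part lies in $L^2L^6$ a priori.

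\textbf{High-modulation piece.} Here $w^{\rm hi}=T F$, where $T$ is the multiplier $\sum_N P_N(\tau+|n|^2)^{-1}\mathbf 1_{|\tau+|n|^2|\gtrsim N}$. The key linear lemma to prove is
\[
\|TF\|_{L^2_tL^6_x}\lesssim\|F\|_{L^2_tL^{6/5}_x}.
\]
We would obtain it by writing $(\tau+|n|^2)^{-1}$ restricted to high modulation as a truncated Duhamel kernel $\int e^{i(t-s)\Delta}K_N(t-s)\,ds$ with $|t-s|\lesssim N^{-1}$. On such short times the torus propagator at frequency $N$ agrees with the periodized Euclidean one up to negligible errors, because the dispersive estimate holds for $|t|\lesssim N^{-1}$. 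The Keel--Tao endpoint inhomogeneous estimate $L^2L^{6/5}\to L^2L^6$ then applies, and the pieces are summed by square functions.

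The nonlinearity is handled by splitting
\[
|u|^2u=|P_{\le K}u|^2u+\bigl(|u|^2-|P_{\le K}u|^2\bigr)u ,
\]
with $K$ chosen from compactness so that $\sup_{J}\|P_{>K}u\|_{H^{1/2}}<\varepsilon$. The second coefficient has small $L^\infty_tL^{3/2}_x$ norm, and Hölder gives $L^2L^6\cdot L^\infty L^{3/2}\subset L^2L^{6/5}$. The first coefficient is bounded in $L^\infty_{t,x}$ by $C_K$, so its contribution is $\lesssim C_K\delta^{1/2}\|u\|_{L^2L^6}$ or is treated via $L^\infty_tL^2_x$. Choosing $\delta$ small, the map
\[
v\mapsto w^{\rm lo}+T\bigl[\chi\mu(\text{coefficient})\,v+i\chi'u\bigr]
\]
is a contraction on $L^2_tL^6_x$, with the coefficient frozen, and has a fixed point $\tilde v$.

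\textbf{Identifying $u$ with the fixed point.} This is the main obstacle: $u$ itself solves the same linear fixed-point equation, but we do not yet know $u\in L^2L^6$. We would run the same contraction in the weak space $X$ defined by $\|(1-\Delta)^{-1/5}v\|_{L^\infty_tL^{5,\infty}_x}$. Sobolev embedding in Lorentz form, $H^{1/2}\hookrightarrow W^{-2/5,5}$-type bounds (from $H^{1/2}\subset L^{3,2}$), shows that $u\in X$. It would then suffice to show that $T$ composed with multiplication by the small coefficient (respectively the bounded coefficient on short time) maps $X\to X$ with small norm. We would try to prove this with the same short-time periodized Euclidean kernel: dispersive bounds for $|t|\lesssim N^{-1}$ together with real interpolation give Lorentz endpoints in space, while $L^\infty_t$ is handled by the $N^{-1}$ time-kernel integrability. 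If the two contractions are compatible, with $L^2L^6\cap X$ invariant, uniqueness in $X$ forces $u=\tilde v$ on $J_0$. Hence $u\in L^2_tL^6_x(J_0)$, and covering $J$ by finitely many $J_0$ concludes the proof. Verifying the $X\to X$ bound for $T$, with the correct Lorentz exponents and negative derivative balance, is the step we expect to be hardest.
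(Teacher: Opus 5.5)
Your architecture matches the paper's. You split by modulation at $|\tau+|n|^2|\sim N$, use Burq--G\'erard--Tzvetkov on $N^{-1}$-intervals for the low part, and use a periodized Keel--Tao endpoint bound $L^2_tL^{6/5}_x\to L^2_tL^6_x$ for the high part. You split $|u|^2$ by compactness. A second contraction in $\|(1-\Delta)^{-1/5}\cdot\|_{L^\infty_tL^{5,\infty}_x}$ then identifies $u$ with the $L^2_tL^6_x$ fixed point.

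The gap is that identification step, which is the heart of the proof and which you leave unverified. It needs two estimates, and you supply neither.

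(i) $\|TF\|_{L^\infty_tL^{5,\infty}_x}\lesssim\|F\|_{L^\infty_tL^{15/13,\infty}_x}$, where $T$ is your high-modulation operator (the paper's $\mathcal H$, Lemma~\ref{lem 3.4}). Note that $\frac{13}{15}-\frac15=\frac23$, so this gains exactly two derivatives, with no room to spare.

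(ii) The trilinear bound $\|(1-\Delta)^{-1/5}(fgh)\|_{L^{15/13,\infty}}\lesssim\|f\|_{H^{1/2}}\|g\|_{H^{1/2}}\|(1-\Delta)^{-1/5}h\|_{L^{5,\infty}}$ (Lemma~\ref{lem 3.5}). This is what makes multiplication by $|u|^2-|P_{\le K}u|^2$ small on your space $X$. Your proposal says nothing about how the potential acts on this negative-regularity space.

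Moreover, the route you sketch for (i) cannot work. Integrating fixed-time dispersive bounds against the kernel loses too much. For a frequency-$N$ bump $f$ of width $N^{-1}$, one has $\|e^{ih\Delta}P_Nf\|_{L^5}\gtrsim N^{1/5}|h|^{-9/10}\|f\|_{L^{15/13}}$ for $N^{-2}\ll|h|\ll N^{-1}$. In that range the time kernel satisfies $|k(Nh)|\sim 1$. So this approach loses $N^{1/10}$ per dyadic block, a power loss that no interpolation in the spatial exponents removes.

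The paper instead dualizes first, so the time integral falls on a fixed test function. There the favorable global Euclidean estimate $\int_{\R}\|e^{ih\Delta_{\R^3}}g\|_{L^{15/2,1}}\,dh\lesssim\|g\|_{L^{5/4,1}}$ is available. It is obtained by using Strichartz for short times and dispersion for long times, balancing the two, and then applying the $J$-method. The paper absorbs the sum over $N$ into the multiplier $m_{j,h}=\sum_N\phi_N k_j(Nh)$, which is Mikhlin uniformly in $h$. It then transfers the estimate to $\T^3$ by cutoff periodization, trading a $2^{3j/p}$ loss against $2^{-Aj}$ decay. Estimate (ii) is proved by duality and Lorentz fractional Leibniz rules.

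Two smaller points.

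First, in the low-modulation step the bound must be $N^{1/2}\|P_Nw\|_{L^2_tL^2_x}$, which is what your interval-by-interval argument actually gives. With $L^\infty_t$ inside the $\ell^2_N$ sum, $\sum_N N\|P_Nw\|^2_{L^\infty_tL^2_x}$ is not controlled by $\|u\|^2_{L^\infty_tH^{1/2}_x}$.

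Second, the claimed $\delta^{1/2}$ gain for $|P_{\le K}u|^2v$ does not follow from the $L^2_tL^{6/5}_x\to L^2_tL^6_x$ bound, since both sides are $L^2$ in time. You would also need smallness of this term in $X$, and nothing is offered for that. The simpler fix is to keep this term out of both contractions. Since $w\in L^\infty_tL^3_x$ is already known and compactly supported in time, $|P_{\le K}u|^2w\in L^2_tL^{6/5}_x$ can go into the source term, as in the paper. Then only the small coefficient enters the contractions. The source lies in $X$ because it equals $w$ minus the small-coefficient term applied to $w$. No short intervals or covering are needed.
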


As a consequence, Proposition 2.1 then follows by interpolation.

\begin{proof}[Proof of Proposition \ref{prop 2.1}]
Notice that $u\in L_t^\infty L_x^3(I)$ due to the Sobolev embedding $H_x^{1/2}\hookrightarrow L_x^3$. The desired claim follows from interpolating $L_t^\infty L_x^3$ with $L_t^2 L_x^6$ deduced from Proposition \ref{prop 3.1}.
\end{proof}

Before proceeding with the proof of Proposition \ref{prop 3.1}, we introduce some additional notation. Specifically, we define the low- and high-modulation cutoff operators $\mathcal L$ and $\mathcal H$ trough their space-time Fourier transforms, respectively, as follows:
\begin{align*}
\widetilde{\mathcal{L}z}(\tau,n)
&=\sum_N \phi_N(n)
\left[1-\chi((\tau+|n|^2)/N)\right]
\widetilde z(\tau,n),
\\
\widetilde{\mathcal{H}f}(\tau,n)
&=-\sum_N \phi_N(n)
\frac{\chi((\tau+|n|^2)/N)}{\tau+|n|^2}
\widetilde f(\tau,n).
\end{align*}
Here, the function $\chi\in C^\infty(\R)$ is a real even function which equals zero on $|\tau|\leq 1$ and one on $|\tau|\geq 2$. The proof of Proposition \ref{prop 3.1} relies on the following key lemmas giving crucial estimates for the operators $\mathcal{L}$ and $\mathcal{H}$.

\begin{lemma}[Low-part strong estimate]\label{lem 3.2}
For $f:\R\times\T^3\to\C$ it holds
\begin{equation*}
\|\mathcal{L}f\|_{L_{t}^2 L_x^6(\R)}\lesssim \|f\|_{L_t^2 H_x^{1/2}(\R)}.
\end{equation*}
\end{lemma}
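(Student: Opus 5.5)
The plan is to reduce to a single dyadic block via Littlewood--Paley theory, and then to write $\mathcal L$ at frequency $N$ as a time average, over windows of length about $N^{-1}$, of free Schr\"odinger evolutions. On each window the short-time Strichartz estimate of Burq--G\'erard--Tzvetkov \cite{Burq1} at the endpoint pair $(2,6)$ applies with no loss. Write $\mathcal L f=\sum_N \mathcal L_N f$ with $\mathcal L_N f$ supported at spatial frequencies $|n|\sim N$, and $f_N:=P_Nf$ (with $\phi_N$ playing the role of $P_N$).

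\textbf{Step 1: reduction to one block.} By the Littlewood--Paley square function estimate on $\T^3$ (with slightly fattened projections $\widetilde P_N$ satisfying $\widetilde P_N\mathcal L_N=\mathcal L_N$), and then Minkowski's inequality since $6\ge 2$, we get for each fixed $t$
\[
\|\mathcal Lf(t)\|_{L^6_x}\lesssim \Bigl(\sum_N\|\mathcal L_Nf(t)\|_{L^6_x}^2\Bigr)^{1/2}.
\]
Integrating in time gives $\|\mathcal Lf\|_{L^2_tL^6_x}^2\lesssim\sum_N\|\mathcal L_Nf\|_{L^2_tL^6_x}^2$. It therefore suffices to prove
\[
\|\mathcal L_Nf\|_{L^2_tL^6_x}\lesssim N^{1/2}\|f_N\|_{L^2_{t,x}},
\]
since summing the squares over $N$ then yields $\|f\|_{L^2_tH^{1/2}_x}^2$.

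\textbf{Step 2: kernel representation.} Set $\psi:=\mathcal F^{-1}_\tau(1-\chi)$, which is Schwartz because $1-\chi\in C_c^\infty$. The multiplier $1-\chi((\tau+|n|^2)/N)$ acts as a time convolution after conjugating by the free flow. Taking the paper's Fourier sign conventions, this gives
\[
\mathcal L_Nf(t)=\int_\R N\psi(N(t-r))\,e^{i(t-r)\Delta}f_N(r)\,dr .
\]
Using $|N\psi(N\cdot)|\lesssim N\langle N\cdot\rangle^{-M}$ and Cauchy--Schwarz in $r$ against the integrable weight $N\langle N(t-r)\rangle^{-M}$, then squaring, integrating in $t$ and applying Fubini, we obtain
\[
\|\mathcal L_Nf\|_{L^2_tL^6_x}^2\lesssim\int_\R\sum_{k\in\Z}\langle k\rangle^{-M}N\,\bigl\|e^{is\Delta}f_N(r)\bigr\|^2_{L^2_sL^6_x([k/N,(k+1)/N])}\,dr .
\]
By unitarity of $e^{ik\Delta/N}$ on $L^2$, each window can be shifted to $[0,N^{-1}]$.

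\textbf{Step 3: short-time endpoint Strichartz (the main obstacle).} We need
\[
\|e^{is\Delta}P_N\varphi\|_{L^2_sL^6_x([0,N^{-1}])}\lesssim\|\varphi\|_{L^2_x}
\]
uniformly in $N$. Granting this, the display in Step 2 is bounded by $N\int\|f_N(r)\|_{L^2_x}^2\,dr$, which is exactly the required $N^{1/2}$ bound. To prove it, I would use the dispersive estimate of \cite{Burq1}: the kernel of $e^{is\Delta}\widetilde P_N$ on $\T^3$ is bounded by $C|s|^{-3/2}$ for $|s|\lesssim N^{-1}$, obtained by comparing with the Euclidean kernel via Poisson summation and finite speed of propagation at frequency $N$ on that time scale. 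With this in hand, the abstract Keel--Tao theorem on the time interval $[0,N^{-1}]$ gives the endpoint $(2,6)$ estimate with no derivative loss.

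The delicate point is that Keel--Tao needs the $TT^*$ dispersive bound for the frequency-localized operator $\widetilde P_N e^{i(s-s')\Delta}\widetilde P_N$ for all $|s-s'|\le N^{-1}$. I would check that the constant in the BGT parametrix is uniform in $N$ on this range, possibly after shrinking the window to $cN^{-1}$; changing the window length only alters the number of windows by a constant factor and is harmless.
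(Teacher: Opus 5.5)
Your proposal is correct and follows essentially the same route as the paper. Both arguments write $\mathcal L_Nf(t)=\int N\check\psi(N(t-s))e^{i(t-s)\Delta}P_Nf(s)\,ds$, cut time into windows of length $N^{-1}$, apply the Burq--G\'erard--Tzvetkov short-time $(2,6)$ Strichartz estimate on each window to get $\|\mathcal L_Nf\|_{L^2_tL^6_x}\lesssim N^{1/2}\|P_Nf\|_{L^2_{t,x}}$, and then sum with Littlewood--Paley and Minkowski. The differences are minor: you close the dyadic bound by Cauchy--Schwarz against the kernel and the substitution $s=t-r$ rather than the paper's Minkowski plus discrete Young over windows, and you sketch the short-time estimate (dispersive bound for $|s|\lesssim N^{-1}$ via Poisson summation, then Keel--Tao) where the paper simply cites \cite{Burq1}.
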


\begin{lemma}[High-part strong estimate]\label{lem 3.3}
For $f:\R\times\T^3\to\C$ it holds
\begin{equation*}
\|\mathcal{H}f\|_{L_{t}^2 L_x^6(\R)}\lesssim \|f\|_{L_t^2 L_x^{6/5}(\R)}.
\end{equation*}
\end{lemma}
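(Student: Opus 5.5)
The proof reduces to a single dyadic frequency block and then treats that block as a time-truncated, periodized version of the Euclidean retarded Duhamel operator, for which endpoint Strichartz machinery is available. The last step is the one I cannot yet close cleanly.

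\emph{Step 1: reduction to one frequency block.} First I would remove the sum over $N$ by Littlewood--Paley theory. Write $\mathcal H f=\sum_N \mathcal H_N \tilde P_N f$, where $\tilde P_N$ is a fattened projection and $\mathcal H_N$ has the multiplier $-\phi_N(n)\chi((\tau+|n|^2)/N)/(\tau+|n|^2)$. The square function estimate in $L^6_x$, followed by Minkowski (since $6\ge 2$), gives
\[
\|\mathcal Hf\|_{L^2_tL^6_x}\lesssim \Bigl(\sum_N\|\mathcal H_N\tilde P_Nf\|_{L^2_tL^6_x}^2\Bigr)^{1/2}.
\]
On the dual side ($6/5\le 2$) one has $(\sum_N\|\tilde P_N f\|_{L^2_tL^{6/5}_x}^2)^{1/2}\lesssim\|f\|_{L^2_tL^{6/5}_x}$. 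So it suffices to prove the bound
\[
\|\mathcal H_N g\|_{L^2_tL^6_x}\lesssim\|g\|_{L^2_tL^{6/5}_x}\quad\text{uniformly in }N.
\]

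\emph{Step 2: the physical-space kernel.} In physical space, $\mathcal H_N g(t)=\int k_N(t-s)\,e^{i(t-s)\Delta}P_Ng(s)\,ds$. Here $k_N$ is the inverse time Fourier transform of $\chi(\sigma/N)/\sigma$. Since $(1-\chi(\sigma/N))/\sigma$ is smooth and compactly supported at scale $N$, we have
\[
k_N(s)=c\,\mathrm{sgn}(s)-\psi(Ns)
\]
with $\psi$ odd and Schwartz. Thus $k_N$ is bounded and equals the retarded Duhamel kernel $c\,\mathrm{sgn}(s)$ up to a smoothing at time scale $1/N$. To exploit the time scale $1/N$ I would decompose the modulation dyadically. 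With $|\tau+|n|^2|\sim M\ge N$, the multiplier becomes $M^{-1}\rho(\sigma/M)$ with $\rho$ vanishing near $0$. Its kernel is $\check\rho(Ms)$, which is concentrated on $|s|\lesssim 1/M\le 1/N$ with rapidly decaying tails.

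\emph{Step 3: the periodized dispersive estimate.} On $|s|\lesssim 1/N$ the frequency-$N$ periodic propagator behaves like the Euclidean one. By Poisson summation, the kernel of $e^{is\Delta}P_N$ on $\T^3$ is the periodization of the Euclidean kernel. Only $O(1)$ translates contribute non-negligibly, because of finite propagation speed $\sim N$ over times $\lesssim 1/N$. This gives
\[
\|e^{is\Delta}P_N\|_{L^1\to L^\infty}\lesssim|s|^{-3/2}\quad\text{for }|s|\lesssim 1/N.
\]
Combined with $L^2$ unitarity, this is exactly the hypothesis of the Keel--Tao abstract endpoint theorem, including its retarded and time-truncated version: their dyadic decomposition in $|t-s|\sim 2^j$ only uses $2^j\lesssim 1/N$. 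It yields the inhomogeneous $L^2_tL^{6/5}_x\to L^2_tL^6_x$ bound for the part of the kernel supported in $|t-s|\lesssim 1/N$. Equivalently, I would localize $g$ to a unit cube, lift to $\R^3$, and apply the Euclidean endpoint inhomogeneous Strichartz estimate for the operator with kernel $k_N(t-s)\eta(N(t-s))$. Here the factor $1/N$ in the multiplier exactly compensates for the fact that the time window has length $1/N$.

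\emph{Step 4 (main obstacle): the long-time part.} The hardest step is the part of $k_N$ supported on $|t-s|\gg 1/N$. There the torus propagator has no dispersion and $k_N\approx c\,\mathrm{sgn}$ does not decay. I would first try to show that this long-time piece has a time multiplier $m_N^{\rm long}(\sigma)=\chi(\sigma/N)/\sigma-\widehat{k_N\eta(N\cdot)}(\sigma)$ that is $O(N^{-1})$ and smooth on scale $N$ in $\sigma=\tau+|n|^2$. If so, partitioning time into intervals of length $1/N$ makes the piece an almost-orthogonal sum: each interaction between intervals at distance $\sim 2^j/N$ is bounded via Bernstein, with a gain from the decay of the kernel of $m_N^{\rm long}$. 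Failing that, I would fall back on the dyadic-in-$M$ decomposition and look for a summable gain in $M/N$ from the cancellation $\rho(0)=0$ in the short-time endpoint estimate. Making this summation work uniformly is where I expect the real difficulty of Lemma~\ref{lem 3.3} to lie.
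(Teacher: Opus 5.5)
Your Littlewood--Paley reduction (Step 1) and the short-time estimate (Step 3) are sound in outline. But the proposal does not close, and the obstacle you name in Step 4 does not exist: it comes from a miscomputation in Step 2.

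\textbf{The kernel decays.} The function $(1-\chi(\sigma/N))/\sigma$ is not smooth. Since $1-\chi\equiv 1$ near $0$, it carries the whole $1/\sigma$ singularity. The correct observation goes the other way. The symbol $\chi(\sigma/N)/\sigma$ vanishes for $|\sigma|\le N$ and is a smooth symbol of order $-1$. Hence its kernel $k_N(s)=-k(Ns)$ is bounded, jumps only at $s=0$, and satisfies $|k(Ns)|\lesssim_A(1+N|s|)^{-A}$. Explicitly, $k(h)=-i\int_h^\infty b$ for $h>0$ and $k(h)=i\int_{-\infty}^h b$ for $h<0$, where $b=\mathcal F_t^{-1}(1-\chi)$ is Schwartz.

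Your $\psi$ is therefore the smoothed sign function $c\,\mathrm{sgn}*b$, which is not Schwartz. The term $\psi(Ns)$ cancels $c\,\mathrm{sgn}(s)$ for $|s|\gg 1/N$ rather than perturbing it. Your own dyadic decomposition already shows this, since $\sum_{M\ge N}\check\rho(Ms)=O((N|s|)^{-A})$ for $|s|\gg1/N$. So there is no non-decaying long-time piece; removing it is exactly what the modulation cutoff does.

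Your fallback would not help either. The pieces with $M\to\infty$ accumulate to build the jump of $k_N$ at $s=0$. That jump is precisely the genuinely endpoint part, which needs a Keel--Tao type argument rather than a summable gain in $M/N$.

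\textbf{What is missing: a uniform-in-$N$ treatment of the decaying tail.}
\begin{itemize}
\item The paper writes $k=\sum_j k_j$ with $k_j$ supported where $|h|\sim 2^j$ and $\|k_j\|_{L^\infty}\lesssim_A 2^{-Aj}$ (Lemma~\ref{lem:H-time-kernel}).
\item On $|t-s|\sim 2^j/N$ a frequency-$N$ wave travels a distance $\sim 2^j$, so the periodic propagator sees $O(2^{3j})$ translates. Any bound losing a fixed power of $2^j$, uniformly in $N$, is then absorbed by $2^{-Aj}$.
\item The paper implements this by lifting $\rho_j f_{\mathrm{per}}$ to $\R^3$ with a cutoff at scale $2^j$, which costs $2^{5j/2}$ in $L^{6/5}$. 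It then applies the Euclidean endpoint retarded/advanced Strichartz estimate and discards far translates by nonstationary phase (Lemma~\ref{lem 3.10}).
\end{itemize}

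\textbf{A point Step 3 glosses over.} Keel--Tao is stated for the sharp cutoff $\mathbf 1_{s<t}$, not for a weight such as $k_N(t-s)\eta(N(t-s))$ with a jump at $0$. The paper handles this as follows. It splits into $h>0$ and $h<0$ and extends $k_j$ from each half-line to Schwartz functions $h_j^\pm$. It then expands $h_j^\pm(N(t-s))$ into modulations $e^{i\nu N(t-s)}$. This reduces everything to the sharp Duhamel estimate at cost $\|\widehat{h_j^\pm}\|_{L^1}\lesssim 2^{-Aj}$.

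\textbf{Within your own framework.} Once the decay is known, the pieces $j\ge 1$ need no endpoint machinery at all. Periodization gives $\|e^{is\Delta}P_N\|_{L^{6/5}\to L^6}\lesssim 2^{2j}|s|^{-1}$ for $|s|\sim 2^j/N$. Young's inequality in time over this shell then gives $O(2^{(2-A)j})$ uniformly in $N$. So only the $j=0$ piece near $s=0$ requires your Step 3, with the Fourier-expansion fix above.
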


\begin{lemma}[High-part weak estimate]\label{lem 3.4}
For $f:\R\times\T^3\to\C$ it holds
\begin{equation*}
\|\mathcal{H}f\|_{L_{t}^\infty L_x^{5,\infty}(\R)}\lesssim \|f\|_{L_t^\infty L_x^{15/13,\infty}(\R)}.
\end{equation*}
\end{lemma}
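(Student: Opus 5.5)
The plan is to write $\mathcal H$ as a space-time convolution against an explicit kernel. I will then show that this kernel acts, uniformly in time, like a fractional integral of order $2$ on $\T^3$, i.e.\ like $|x|^{-1}$. The exponents in the statement are exactly those of the critical Hardy--Littlewood--Sobolev/O'Neil inequality in three dimensions: $\frac{13}{15}-\frac15=\frac23$, i.e.\ a gain of two derivatives, and $|x|^{-1}\in L^{3,\infty}$ with $1+\frac15=\frac{13}{15}+\frac13$. I expect the weak-type Lorentz endpoint, rather than the kernel bounds, to cause the trouble.

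\textbf{Step 1: kernel representation.} Put $\sigma=\tau+|n|^2$ and $m(\sigma)=-\chi(\sigma)/\sigma$. Then the $N$-th piece of $\mathcal H$ has multiplier $N^{-1}m(\sigma/N)$. Undoing the modulation by conjugating with $e^{it\Delta}$, we get
\[
\mathcal Hf(t)=\sum_N\int_{\R}k(Ns)\,e^{is\Delta}P_Nf(t-s)\,ds ,\qquad k:=\mathcal F_\sigma^{-1}m .
\]
Since $m$ is smooth and equals $-1/\sigma$ for $|\sigma|\ge2$, the function $k$ is bounded, equal to $c\,\mathrm{sign}(s)$ plus a Schwartz-type function near $s=0$, and decays rapidly as $|s|\to\infty$. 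Thus $\mathcal Hf=\int G(s)*f(t-s)\,ds$ with $G(s,\cdot)=\sum_N k(Ns)\,[e^{is\Delta}P_N](\cdot)$.

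\textbf{Step 2: pointwise kernel bounds.} For $|s|\lesssim N^{-1}$ I will use the short-time frequency-localized dispersive bound of Burq--G\'erard--Tzvetkov \cite{Burq1}, $\|e^{is\Delta}P_N\|_{L^1\to L^\infty}\lesssim |s|^{-3/2}$. I will also use the trivial bound $N^3$ and a nonstationary-phase bound, rapid decay in $N|x|$ when $|x|\gg N|s|$. For $N|s|\gg1$ I will exploit the decay of $k$, integrating by parts in $s$ if necessary. Summing in $N$, I aim for
\[
|G(s,x)|\lesssim |s|^{-3/2}\langle |x|/|s|^{1/2}\rangle^{-M}+(\text{acceptable error}),\qquad s\in\R,\ x\in\T^3 .
\]
This is the parabolic profile that integrates in $s$ to $\Gamma(x):=\int|G(s,x)|\,ds\lesssim|x|^{-1}$.

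\textbf{Step 3: the Lorentz endpoint, where I expect the main obstacle.} The difficulty is that $f$ depends on $t-s$, so I cannot simply apply O'Neil to $\Gamma$. Moreover, $\|G(s)\|_{L^{3,\infty}}\sim|s|^{-1}$ is not integrable in $s$, so Minkowski followed by O'Neil at each fixed $s$ fails logarithmically. I would therefore decompose dyadically, $G=\sum_j G_j$ with $G_j$ supported in $|s|\sim 2^{-2j}$. By Young's inequality in $x$ and the trivial integral over $|s|\sim2^{-2j}$,
\[
\Bigl\|\int G_j(s)*f(t-s)\,ds\Bigr\|_{L^\infty_tL^q_x}\lesssim 2^{-2j}\|G_j\|_{L^\infty_sL^r_x}\|f\|_{L^\infty_tL^p_x}\lesssim 2^{j(1-3/r)}\|f\|_{L^\infty_tL^p_x},
\]
where $1+\frac1q=\frac1p+\frac1r$. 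Choosing $(p,q)$ on either side of $(15/13,5)$ along the line $\frac1p-\frac1q=\frac23$ gives bounds with geometric gains $2^{\mp j\varepsilon}$. Bourgain's summation trick, i.e.\ real interpolation of the sum of dyadic pieces, then yields the weak-type estimate at the critical pair. This is carried out in the $x$-variable pointwise in $t$, which is legitimate since all bounds are uniform in $t$.

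What remains delicate is that this trick naturally produces $L^{15/13,1}\to L^{5,\infty}$ rather than $L^{15/13,\infty}\to L^{5,\infty}$. To upgrade the source space, I would first try to exploit a positive majorant: bound $|\mathcal Hf(t,x)|$ by $\int\Gamma(y)\,\sup_{s}|f(t-s,x-y)|\,dy$ after localizing $s$ to $|s|\lesssim|y|^2$. I would then apply O'Neil's convolution inequality $L^{3,\infty}*L^{15/13,\infty}\to L^{5,\infty}$ to the parabolic maximal function of $f$, using its boundedness on $L^{15/13,\infty}$ in $x$ for each fixed time scale. If this fails, the fallback is to prove the lemma with $L^{15/13,1}$ on the right-hand side and verify that this weaker form still suffices for the second contraction described in the introduction.
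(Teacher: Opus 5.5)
Your Step 1 is the same kernel representation the paper uses. There is, however, a genuine gap in Step 2, and it is more basic than the Lorentz endpoint you anticipate.

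The profile $|s|^{-3/2}\langle |x|/|s|^{1/2}\rangle^{-M}$ is a heat-kernel profile, and the kernel of $\mathcal H$ does not have it. Since $k(0^\pm)\neq 0$ and $k$ is Lipschitz on each half-line, $k(Ns)=k(0^\pm)+O(N|s|)$ for all $N\ll |s|^{-1}$. So $G(s)$ is essentially $k(0^\pm)e^{is\Delta}$ truncated to frequencies $|\xi|\lesssim |s|^{-1}$. Your per-piece bounds localize the $N$-th piece only to $|x|\lesssim N|s|$, which for $N\sim|s|^{-1}$ is all of $|x|\lesssim 1$. Stationary phase at $\xi=x/(2s)$ then shows, already in the Euclidean model, that $|G(s,x)|\sim |s|^{-3/2}$ for $|s|^{1/2}\ll |x|\ll 1$; periodization only adds the images $x+2\pi\ell$ and does not help.

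The consequences are as follows.
\begin{itemize}
\item $\Gamma(x)=\int |G(s,x)|\,ds=\infty$ for all such $x$, and $\|G(s)\|_{L^r_x}\gtrsim |s|^{-3/2}$ for every $r$.
\item The best bound Young's inequality can give in Step 3 is $2^{-2j}\cdot 2^{3j}=2^{j}$, for every choice of $(p,q)$. There is no pair with a gain, so the summation trick has nothing to work with. (The endpoint upgrade you worry about would in fact be routine by off-diagonal Marcinkiewicz interpolation, if the dyadic bounds held.)
\item The positive-majorant route fails for the same reason. In addition, $\sup_s|f(t-s,\cdot)|$ is not controlled by $\|f\|_{L^\infty_tL^{15/13,\infty}_x}$, since $f$ has no regularity in time. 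A bump $\lambda^{13/5}\varphi(\lambda(x-c(t)))$ sweeping across $\T^3$ has norm $O(1)$ at each time, but its time-maximal function is $\gtrsim\lambda^{13/5}$ everywhere.
\end{itemize}
The boundedness of $\mathcal H$ is a cancellation effect in $s$. Compare $\int_0^\infty (4\pi i s)^{-3/2}e^{i|x|^2/4s}\,ds=c|x|^{-1}$, which converges only conditionally at $s=0$. So no argument that puts absolute values on $G$, or that integrates fixed-$s$ operator norms in $s$, can close.

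What is needed is a dispersive space-time estimate used in dual form, with the time integral taken after the test function is fixed; this is what the paper does. Write $\mathcal H_j^{\R^3}f(t)=\int e^{ih\Delta}m_{j,h}(-i\nabla)f(t-h)\,dh$ and pair with $g\in L^{5/4,1}(\R^3)$. Lorentz--H\"older and the uniform multiplier bound give $|\langle \mathcal H_j^{\R^3}f(t),g\rangle|\lesssim 2^{-Aj}\|f\|_{L^\infty_tL^{15/13,\infty}_x}\int\|e^{-ih\Delta}g\|_{L^{15/2,1}}\,dh$.

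The key input is $\|e^{it\Delta}g\|_{L^1_tL^{15/2,1}_x(\R\times\R^3)}\lesssim\|g\|_{L^{5/4,1}}$ (Lemma~\ref{lem:global-free-Lorentz}). It is proved by interpolating the $L^1\to L^\infty$ dispersive decay with the $L^2_tL^6_x$ Strichartz estimate for $|t|<T$, and with $L^2$ conservation for $|t|>T$. One then optimizes in $T$ and passes to $(L^1,L^2)_{2/5,1}=L^{5/4,1}$ by the $J$-method. For no fixed $h$ is $e^{ih\Delta}$ bounded from $L^{5/4}$ to $L^{15/2}$; only the time integral is finite, and that is exactly the oscillation your approach discards.

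Since $(L^{5/4,1})^*=L^{5,\infty}$ and each $f(t-h)$ is paired with a freely evolved fixed $g$, neither the weak-type endpoint on both sides nor the arbitrary time dependence of $f$ causes any loss. The transfer to $\T^3$ then uses three ingredients, all absorbed by the decay $2^{-Aj}$: the splitting $k=\sum_j k_j$ (the $j$-th piece propagates a distance $\lesssim 2^j$), the cutoff $\rho_j$ on the periodic extension, and a non-stationary-phase remainder.
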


\begin{lemma}[Fractional product rule]\label{lem 3.5}
For $f,g,h:\T^3\to \C$ it holds
\begin{equation*}
\|(1-\Delta)^{-1/5}(fgh)\|_{L_x^{15/13,\infty}}\lesssim \|f\|_{H_x^{1/2}}\|g\|_{H_x^{1/2}}\|(1-\Delta)^{-1/5}h\|_{L_x^{5,\infty}}.
\end{equation*}
\end{lemma}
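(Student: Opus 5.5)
The plan is to prove Lemma \ref{lem 3.5} by duality combined with Hölder's inequality in Lorentz spaces and Sobolev embeddings on $\T^3$. Write $h=(1-\Delta)^{1/5}w$ with $w:=(1-\Delta)^{-1/5}h\in L^{5,\infty}_x$. The Lorentz space $L^{15/13,\infty}$ is (up to equivalent norms) the dual of $L^{15/2,1}$, since $13/15+2/15=1$. It therefore suffices to show, for every test function $\varphi$,
\[
\Bigl|\int_{\T^3} fg\,\bigl((1-\Delta)^{1/5}w\bigr)\,(1-\Delta)^{-1/5}\bar\varphi\,dx\Bigr|\lesssim \|f\|_{H^{1/2}}\|g\|_{H^{1/2}}\|w\|_{L^{5,\infty}}\|\varphi\|_{L^{15/2,1}}.
\]
Next we move the operator $(1-\Delta)^{1/5}$ off $w$ by self-adjointness. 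Setting $\psi:=(1-\Delta)^{-1/5}\varphi$, the integral becomes $\int w\,(1-\Delta)^{1/5}\bigl(fg\bar\psi\bigr)\,dx$. By the $L^{5,\infty}$–$L^{5/4,1}$ duality, we need
\[
\|(1-\Delta)^{1/5}(fg\bar\psi)\|_{L^{5/4,1}}\lesssim \|f\|_{H^{1/2}}\|g\|_{H^{1/2}}\|\varphi\|_{L^{15/2,1}}.
\]

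We prove this bound with a fractional Leibniz (Kato–Ponce) rule of order $2/5$ on $\T^3$, applied to the triple product and placing the derivative on each factor in turn. The exponent bookkeeping runs as follows.
\begin{itemize}
\item Since $H^{1/2}\hookrightarrow L^{3,2}\subset L^{3}$, we have $f,g\in L^{3,2}$.
\item Since $J^{2/5}H^{1/2}=H^{1/10}\hookrightarrow L^{30/13,2}$ (where $1/2-1/30=13/30$), the derivative $J^{2/5}f$ lies in $L^{30/13,2}$; here $J=(1-\Delta)^{1/2}$.
\item $\psi=J^{-2/5}\varphi$ lies in $L^{q,1}$ with $1/q=2/15-2/15=0$. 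This is the $L^\infty$ endpoint, which fails, so it needs care.
\item $J^{2/5}\psi=\varphi$ lies in $L^{15/2,1}$.
\end{itemize}
Checking the terms: $\tfrac13+\tfrac13+\tfrac2{15}=\tfrac{4}{5}$, so the term with the derivative on $\psi$ lands in $L^{5/4,1}$ by Lorentz–Hölder, using $L^{3,2}\cdot L^{3,2}\subset L^{3/2,1}$. For the term with the derivative on $f$, we have $\tfrac{13}{30}+\tfrac13=\tfrac{23}{30}$ and would need $\psi$ in $L^{30/1}$, i.e.\ $L^{30}$ rather than $L^\infty$. So we rebalance: using $H^{1/2}\hookrightarrow L^{3}$ only on $g$, it suffices to have $\psi\in L^{30,1}$ with $\tfrac{13}{30}+\tfrac{10}{30}+\tfrac1{30}=\tfrac{24}{30}=\tfrac45$. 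This holds since $J^{-2/5}:L^{15/2,1}\to L^{r,1}$ for every $r<\infty$, as $\T^3$ is compact.

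The main obstacle is that the fractional Leibniz rule is needed in Lorentz spaces on the torus, and that at the endpoint it requires a factor in $L^\infty$. To avoid both issues, I would first try to prove the estimate directly via a Littlewood–Paley paraproduct decomposition of $fg\bar\psi$. The low–high and high–high pieces are estimated frequency by frequency, and the Lorentz refinement is recovered by real interpolation. This uses the fact that the estimate is trilinear and each fixed exponent configuration is strictly non-endpoint, so we can perturb the exponents slightly and interpolate (Marcinkiewicz). Alternatively, one can transfer the Euclidean Kato–Ponce inequality to $\T^3$ by periodization, since all exponents are in the open range $(1,\infty)$ once $\psi$ is placed in $L^{30,1}$.
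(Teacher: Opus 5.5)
Your reduction is the same as the paper's. You pair with $\varphi\in L^{15/2,1}$, move $(1-\Delta)^{1/5}$ onto $fg\bar\psi$ with $\psi=(1-\Delta)^{-1/5}\varphi$, and reduce to $\|(1-\Delta)^{1/5}(fg\bar\psi)\|_{L^{5/4,1}}\lesssim\|f\|_{H^{1/2}}\|g\|_{H^{1/2}}\|\varphi\|_{L^{15/2,1}}$. The gap is in how you propose to prove this last bound.

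\textbf{The exponent bookkeeping is wrong.} We have $(1-\Delta)^{1/5}f\in H^{1/10}(\T^3)\hookrightarrow L^{15/7,2}$, because $\frac12-\frac1{30}=\frac{14}{30}=\frac7{15}$, not $\frac{13}{30}$. With the correct value, the term with the derivative on $f$ requires $\frac7{15}+\frac13+\frac1r=\frac45$, that is $r=\infty$. So the rebalancing to $\psi\in L^{30,1}$ is not available. In fact no rebalancing can work: the estimate is exactly scale-critical (each of $f$, $g$, $h$ sits at the $L^3$ level), and on $\T^3$ you can only lower integrability exponents, never raise them.

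\textbf{The endpoint you think fails is true, and it is exactly what is needed.} We have $\|(1-\Delta)^{-1/5}\varphi\|_{L^\infty}\lesssim\|\varphi\|_{L^{15/2,1}}$. The reason is that the Bessel kernel of order $2/5$ behaves like $|x|^{-13/5}$ near the origin, so it lies in $L^{15/13,\infty}=(L^{15/2,1})^*$. This is the paper's \eqref{eq:Sobolev-Lorentz-particular2}, and it is why the test class is $L^{15/2,1}$ rather than $L^{15/2}$.

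\textbf{The Lorentz upgrade is not justified.} Once $\psi$ sits at $L^\infty$, your claim that every configuration is strictly non-endpoint, so you can perturb exponents and interpolate, breaks down. The $\psi$-slot is at the $L^\infty$ endpoint. Moreover, the term $(1-\Delta)^{1/5}f\cdot g\cdot\psi$ lands in $L^{5/4}$ no matter how good $\psi$ is. So interpolating in $\varphi$ cannot produce the second Lorentz index $1$, which you need in order to pair with $w\in L^{5,\infty}$.

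\textbf{How the paper handles it.} It splits the product as $F=fg$ and $G=\psi$ and freezes $G$. The linear map $u\mapsto(1-\Delta)^{1/5}[G(1-\Delta)^{-1/5}u]$ is then bounded on $L^q$ for all $1<q<15/2$, with norm $\lesssim\|G\|_{L^\infty}+\|(1-\Delta)^{1/5}G\|_{L^{15/2}}$; this uses only ordinary Kato--Ponce plus Sobolev. Real interpolation between $q=6/5$ and $q=4/3$ then gives the $L^{5/4,1}$ bound (Lemma \ref{lem:Lorentz-Leibniz}).

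What remains is the bilinear estimate $\|(1-\Delta)^{1/5}(fg)\|_{L^{5/4,1}}\lesssim\|f\|_{H^{1/2}}\|g\|_{H^{1/2}}$. Here the second index $1$ genuinely comes from $L^{15/7,2}\cdot L^{3,2}\subset L^{5/4,1}$. It therefore requires a fractional Leibniz rule valid in Lorentz spaces; the paper uses the one of Cruz-Uribe and Naibo. Linear interpolation in $f$ with $g$ fixed would only give $L^{5/4,2}$.

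Your trilinear Leibniz scheme can be repaired along these lines. As written, though, its key steps rest on the arithmetic slip and on a non-endpoint claim that is false.
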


With Lemma \ref{lem 3.2} to Lemma \ref{lem 3.5} in place, we give the proof of Proposition \ref{prop 3.1}.

%
%
%

\begin{proof}[Proof of Proposition~\ref{prop 3.1}]
Fix a compact interval $J=[a,b]\subset I$.
We first extend $u$ to the whole time axis by setting
\[
U(t)=
\begin{cases}
e^{i(t-a)\Delta}u(a), & t<a,\\
u(t), & a\le t\le b,\\
e^{i(t-b)\Delta}u(b), & t>b.
\end{cases}
\]
Choose $\eta\in C_c^\infty(\R)$ such that
$\eta=1$ in a neighborhood of $J$, and let
$w=\eta U$. Since $u\in C(I;H_x^{1/2})$,
we have $w\in L_t^2H_x^{1/2}
\cap L_t^\infty H_x^{1/2}$. Moreover, the equation
\[
(i\partial_t+\Delta)w
=\mu 1_J|u|^2w+i\eta'U
\]
holds in the distributional sense. By the definitions
of $\mathcal L$ and $\mathcal H$, we have
$\mathcal H(i\partial_t+\Delta)=I-\mathcal L$.
Consequently,
\begin{equation}\label{eq:w-modulation-identity}
w=\mathcal Lw+
\mathcal H\bigl(\mu1_J|u|^2w+i\eta'U\bigr).
\end{equation}

We next decompose the potential into a low-frequency
part and a small remainder. Set
$R=\|u\|_{L_t^\infty H_x^{1/2}(J)}$.
If $R=0$, the conclusion is immediate.
Otherwise, since $u(J)$ is compact in
$H^{1/2}(\T^3)$, for every $\varepsilon>0$
we may choose $K$ sufficiently large so that
\[
\sup_{t\in J}
\|(I-P_{\le K})u(t)\|_{H_x^{1/2}}
<\varepsilon.
\]
Using the identity
\[
|u|^2-|P_{\le K}u|^2
=(u-P_{\le K}u)\overline u
+(P_{\le K}u)\overline{(u-P_{\le K}u)},
\]
together with the Sobolev embedding
$H^{1/2}(\T^3)\hookrightarrow L^3(\T^3)$,
we obtain
\begin{equation}\label{eq:potential-small}
\bigl\||u|^2-|P_{\le K}u|^2
\bigr\|_{L_t^\infty L_x^{3/2}(J)}
\lesssim \|(I-P_{\leq K})u\|_{L_t^\infty L_x^{3}(J)}\|u\|_{L_t^\infty L_x^{3}(J)}
\lesssim\varepsilon R.
\end{equation}

Define
\[
\mathcal{T}f=\mu\mathcal H
\bigl[1_J(|u|^2-|P_{\le K}u|^2)f\bigr].
\]
We first estimate $\mathcal{T}$ in $L_t^2L_x^6$.
By Lemma~\ref{lem 3.3}, H\"older and \eqref{eq:potential-small}, we have
\begin{equation}\label{eq:T-strong}
\begin{aligned}
\|\mathcal{T}f\|_{L_t^2L_x^6(\R)}
\lesssim
\|1_J(|u|^2-|P_{\le K}u|^2)f
\|_{L_t^2L_x^{6/5}(\R)}
\lesssim
\varepsilon R\|f\|_{L_t^2L_x^6(J)}.
\end{aligned}
\end{equation}
Similarly, using Lemma~\ref{lem 3.4}, Lemma~\ref{lem 3.5} and the fact that $\mathcal H$ commutes
with $(1-\Delta)^{-1/5}$, we obtain
\begin{equation}\label{eq:T-weak}
\begin{aligned}
&\|(1-\Delta)^{-1/5}\mathcal{T}f
\|_{L_t^\infty L_x^{5,\infty}(\R)}\\
&\quad\lesssim
\|(1-\Delta)^{-1/5}
[1_J(|u|^2-|P_{\le K}u|^2)f]
\|_{L_t^\infty L_x^{15/13,\infty}(\R)}\\
&\quad\lesssim
\|(I-P_{\leq K})u\|_{L_t^\infty H_x^{1/2}(J)}
\|u\|_{L_t^\infty H_x^{1/2}(J)}
\|(1-\Delta)^{-1/5}f\|_{L_t^\infty L_x^{5,\infty}(J)}
\\
&\quad\lesssim
\varepsilon R
\|(1-\Delta)^{-1/5}f
\|_{L_t^\infty L_x^{5,\infty}(J)}.
\end{aligned}
\end{equation}
We shall choose $\varepsilon$ sufficiently small so that $C\vare R\leq 1/2$, and in turn \eqref{eq:T-strong} and \eqref{eq:T-weak} define contractions on corresponding function spaces.

Next, by decomposing $|u|^2$ to
$$|u|^2=(|u|^2-|P_{\leq K}u|^2)+|P_{\leq K}u|^2,$$
we may rewrite \eqref{eq:w-modulation-identity} as
\begin{equation}\label{eq:w-fixedpoint}
\begin{aligned}
w=G+\mathcal T w,\qquad
G=\mathcal Lw+
\mathcal H\bigl(
\mu1_J|P_{\le K}u|^2w+i\eta'U
\bigr),
\end{aligned}
\end{equation}
where $G$ and $\mathcal T w$ can be seen as the low and high part of $w$ respectively. We first show that $G\in L_t^2L_x^6$. Indeed, Lemma~\ref{lem 3.2} already gives $\mathcal Lw\in L_t^2L_x^6$. Using Bernstein we also know that $P_{\le K}u\in L_t^\infty L_x^\infty(J)$. Moreover, $w$ and $\eta'U$ are compactly supported in time and therefore belong to $L_t^\infty H_x^{1/2}$. H\"older and Sobolev's embedding $H_x^{1/2}\hookrightarrow L_x^3$ then yield
\[
\mu1_J|P_{\le K}u|^2w+i\eta'U
\in L_t^2L_x^{6/5}.
\]
Applying Lemma~\ref{lem 3.3}, we conclude the claim $G\in L_t^2L_x^6$. We next claim
\begin{align}\label{3.12}
\|(1-\Delta)^{-1/5}w\|_{L_t^\infty L_x^{5,\infty}(J)}+\|(1-\Delta)^{-1/5}G\|_{L_t^\infty L_x^{5,\infty}(J)}<\infty.
\end{align}
Indeed, we first obtain by using Sobolev's embedding that
\[
\|(1-\Delta)^{-1/5}w
\|_{L_t^\infty L_x^{5,\infty}}\lesssim \|(1-\Delta)^{-1/5}w
\|_{L_t^\infty L_x^{5}}
\lesssim
\|w\|_{L_t^\infty H_x^{1/2}}
<\infty.
\]
Since $w=G+\mathcal{T}w$, the estimate
\eqref{eq:T-weak} further implies
\begin{equation*}
\|(1-\Delta)^{-1/5}G
\|_{L_t^\infty L_x^{5,\infty}}\lesssim \|(1-\Delta)^{-1/5}w
\|_{L_t^\infty L_x^{5,\infty}}<\infty,
\end{equation*}
from which \eqref{3.12} follows.

We now consider the iteration
\begin{equation}\label{eq:w-iteration}
w^{0}=G,\qquad
w^{j+1}=G+\mathcal{T}w^{j}.
\end{equation}

By \eqref{eq:T-strong} and Banach fixed point theorem (by choosing $C\vare R\leq 1/2<1$), the sequence
$(w^{j})_j$ converges in $L_t^2L_x^6(J)$ to some $v\in L_t^2L_x^6(J)$ satisfying 
\[v=G+\mathcal{T}v\]
in $L_{t}^2 L_x^6(J)$. The remaining task is to show that $w=v$ in distributional sense, from which it follows $w\in L_{t}^2 L_x^6(J)$ and also the desired claim $u\in L_{t}^2 L_x^6(J)$ (using the fact $u=w$ on $J$).
We now subtract \eqref{eq:w-fixedpoint} from \eqref{eq:w-iteration}, and apply \eqref{eq:T-weak} iteratively from $w^{j}$ down to $w^0$, to obtain
\begin{equation}\label{eq:w-weak-convergence}
\begin{aligned}
\|(1-\Delta)^{-1/5}(w^{j}-w)
\|_{L_t^\infty L_x^{5,\infty}}
\le 2^{-j}
\|(1-\Delta)^{-1/5}(G-w)
\|_{L_t^\infty L_x^{5,\infty}}
\longrightarrow0
\end{aligned}
\end{equation}
as $j\to\infty$. 
Both convergence (\eqref{eq:w-iteration} in the space $L_t^2 L_x^6$ and \eqref{eq:w-weak-convergence}) imply that $v$ and $w$ will indeed coincide in the distributional sense, and the desired proof is complete.
\end{proof}

The remaining part of this section will be devoted to the proofs of Lemma \ref{lem 3.2} to Lemma \ref{lem 3.5}.

\subsection{Low-part strong estimate}
We begin by stating a useful short-time Strichartz estimate introduced in \cite{Burq1} (see the estimate above (3.10) in \cite{Burq1}). In particular, by restricting the time interval to length \(\sim N^{-1}\), we can go beyond the range of exponents covered by the usual Bourgain-type frequency-localized Strichartz estimates, and obtain Strichartz estimates with lower time integrability exponents.

\begin{lemma}[Short time Strichartz on tori, {\cite{Burq1}}]\label{lem:BGT-short-time}
For any dyadic number $N\geq 1$ and time interval $J_N\subset\mathbb R$
with $|J_N|\lesssim N^{-1}$, we have
\begin{equation*}
\|e^{it\Delta}P_N f\|_{L_t^2(J_N;L_x^6(\mathbb T^3))}
\lesssim C\|f\|_{L^2(\mathbb T^3)}.
\end{equation*}
\end{lemma}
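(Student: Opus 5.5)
The plan is to derive the estimate from a frequency-localized dispersive estimate valid on times $|t|\lesssim N^{-1}$, and then apply the abstract Keel--Tao endpoint machinery on the interval $J_N$. Since $e^{it\Delta}$ is unitary and commutes with translations in time, I may assume $J_N=[0,cN^{-1}]$. I will write $P_N=\tilde P_N P_N$, where $\tilde P_N$ is a slightly fattened Littlewood--Paley multiplier with symbol $\tilde\phi(n/N)$. It then suffices to prove
\[
\|e^{it\Delta}\tilde P_N g\|_{L^2_t(J_N;L^6_x)}\lesssim \|g\|_{L^2},\qquad g=P_Nf .
\]

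\textbf{Step 1 (dispersive bound).} I aim to show
\[
\|e^{it\Delta}\tilde P_N\|_{L^1(\T^3)\to L^\infty(\T^3)}\lesssim |t|^{-3/2}\qquad\text{for } 0<|t|\le 2cN^{-1}.
\]
The kernel is $K_t(x)=\sum_{n\in\Z^3}\tilde\phi(n/N)e^{i(x\cdot n-t|n|^2)}$ (up to normalization). By Poisson summation,
\[
K_t(x)=\sum_{k\in\Z^3}k_t(x+2\pi k),
\]
where $k_t$ is the Euclidean kernel of $e^{it\Delta}\tilde\phi(D/N)$ on $\R^3$. For this Euclidean kernel I will use two bounds.
\begin{itemize}
\item The standard bound $|k_t(y)|\lesssim |t|^{-3/2}$.
\item For $|y|\ge 8N|t|$, the phase $y\cdot\xi-t|\xi|^2$ satisfies $|\nabla_\xi(\text{phase})|=|y-2t\xi|\ge |y|/2$ on $|\xi|\sim N$. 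Integrating by parts $M$ times then gives $|k_t(y)|\lesssim_M N^3(N|y|)^{-M}$. Each integration by parts gains $(N|y|)^{-1}$, because the amplitude varies on scale $N$ and the second derivative of the phase is $O(|t|)$, with $|t|\le|y|/N$.
\end{itemize}
Since $N|t|\lesssim 1$, I choose the reduced representative $x\in[-\pi,\pi)^3$. Then only the $O(1)$ indices $k$ with $|k|\le 1$ can have $|x+2\pi k|<8N|t|$, and these contribute $\lesssim|t|^{-3/2}$. The remaining terms sum to $\lesssim N^{3}\sum_{k\ne0}(N|k|)^{-M}\lesssim N^{3-M}\lesssim 1\lesssim |t|^{-3/2}$.

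\textbf{Step 2 (Keel--Tao).} Set $U(t)=1_{J_N}(t)e^{it\Delta}\tilde P_N$. Then:
\begin{itemize}
\item $\|U(t)\|_{L^2\to L^2}\le 1$;
\item $U(t)U(s)^*=1_{J_N}(t)1_{J_N}(s)e^{i(t-s)\Delta}\tilde P_N^2$, and $|t-s|\le |J_N|\lesssim N^{-1}$.
\end{itemize}
Step 1, applied to $\tilde P_N^2$ (whose symbol is again a smooth bump at scale $N$), therefore gives $\|U(t)U(s)^*\|_{L^1\to L^\infty}\lesssim|t-s|^{-3/2}$. The abstract Keel--Tao theorem with $\sigma=3/2$ yields the endpoint pair $(2,6)$, which is exactly the claim, with a constant uniform in $N$.

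\textbf{Main obstacle.} I expect the main difficulty to be Step 1. One must make sure that the restriction $|t|\lesssim N^{-1}$ really confines the wave packets to $O(1)$ periods, and that the tail sum over $k$ is bounded uniformly in $N$. The integration-by-parts bound must be checked carefully, including the $t$-dependence of the phase. An alternative is to prove the localized bound directly via the one-dimensional factorization $\tilde\phi(n/N)\approx\prod_j\psi(n_j/N)$, which reduces matters to a one-dimensional Poisson summation estimate; I would switch to this if the multidimensional stationary-phase bookkeeping becomes cumbersome.
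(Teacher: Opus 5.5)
Your argument is correct and is essentially the proof in the cited source \cite{Burq1}; the paper itself only quotes the result. That proof obtains a dispersive bound $\|e^{it\Delta}P_N\|_{L^1\to L^\infty}\lesssim |t|^{-3/2}$ for $0<|t|\lesssim N^{-1}$ and feeds it into the Keel--Tao endpoint theorem on $J_N$; your Poisson summation on $\T^3$ replaces Burq--G\'erard--Tzvetkov's semiclassical parametrix on general manifolds. Two bookkeeping points need tidying, both harmless: always bound the $k=0$ term by the uniform $|t|^{-3/2}$ estimate, and note that the number of translates with $|x+2\pi k|<8N|t|$ is $O(1)$ depending on the implicit constant in $|J_N|\lesssim N^{-1}$, not literally those with $|k|\le 1$.
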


We are now ready to give the proof of Lemma~\ref{lem 3.2}.

\begin{proof}[Proof of Lemma~\ref{lem 3.2}]
Let $\psi=1-\chi$. By the definition of $\mathcal L$, we may write
$\mathcal Lf=\sum_N\mathcal L_Nf$, where
\begin{equation}\label{eq:L-kernel}
\mathcal L_Nf(t)
=\int_{\R}N\check\psi(N(t-s))
e^{i(t-s)\Delta}P_Nf(s)\,ds.
\end{equation}
We claim that
\begin{equation}\label{eq:L-dyadic}
\|\mathcal L_Nf\|_{L_t^2L_x^6(\R)}
\lesssim N^{1/2}
\|P_Nf\|_{L_t^2L_x^2(\R)}.
\end{equation}
To prove \eqref{eq:L-dyadic}, partition first the time axis into
intervals
$I_k=[k/N,(k+1)/N)$, $k\in\Z$.
For $t\in I_k$ and $s\in I_\ell$, the rapid
decay of $\check\psi$ gives, for any $A>1$,
\[
N|\check\psi(N(t-s))|
\lesssim_A N(1+|k-\ell|)^{-A}.
\]
Moreover, for each fixed $s\in I_\ell$,
Lemma~\ref{lem:BGT-short-time} yields
\[
\begin{aligned}
\|e^{i(t-s)\Delta}P_Nf(s)
\|_{L_t^2L_x^6(I_k)}
&=
\|e^{it\Delta}e^{-is\Delta}P_Nf(s)
\|_{L_t^2L_x^6(I_k)}\\
&\lesssim
\|e^{-is\Delta}P_Nf(s)\|_{L_x^2}
=\|P_Nf(s)\|_{L_x^2}.
\end{aligned}
\]
Applying then Minkowski's inequality to
\eqref{eq:L-kernel}, we obtain
\[
\begin{aligned}
\|\mathcal L_Nf\|_{L_t^2L_x^6(I_k)}
&\leq
C_AN\sum_{\ell\in\Z}(1+|k-\ell|)^{-A}
\int_{I_\ell}\|P_Nf(s)\|_{L_x^2}\,ds\\
&\leq
C_AN^{1/2}\sum_{\ell\in\Z}
(1+|k-\ell|)^{-A}
\|P_Nf\|_{L_t^2(I_\ell;L_x^2)},
\end{aligned}
\]
where the last inequality follows from Cauchy--Schwarz and $|I_\ell|=N^{-1}$. For large $A$, the kernel $(1+|k|)^{-A}$ belongs to $\ell^1(\Z)$. Hence, by choosing $A\gg 1$, the discrete Young's convolution inequality gives
\[
\begin{aligned}
\|\mathcal L_Nf\|_{L_t^2L_x^6(\R)}^2
=\sum_{k\in\Z}
\|\mathcal L_Nf\|_{L_t^2L_x^6(I_k)}^2
\lesssim_A
N\sum_{\ell\in\Z}
\|P_Nf\|_{L_t^2L_x^2(I_\ell)}^2
=N\|P_Nf\|_{L_t^2L_x^2(\R)}^2,
\end{aligned}
\]
which proves \eqref{eq:L-dyadic}.

It remains to sum over the dyadic frequencies. By fixing some $A\gg 1$ and using Littlewood--Paley inequality and Minkowski, we obtain
\[
\begin{aligned}
\|\mathcal Lf\|_{L_t^2L_x^6(\R)}
&\lesssim
\left\|
\left(\sum_N|\mathcal L_Nf|^2\right)^{1/2}
\right\|_{L_t^2L_x^6(\R)}
\le
\left(
\sum_N\|\mathcal L_Nf\|_{L_t^2L_x^6(\R)}^2
\right)^{1/2}\\
&\lesssim
\left(
\sum_N N\|P_Nf\|_{L_t^2L_x^2(\R)}^2
\right)^{1/2}
\lesssim
\|f\|_{L_t^2H_x^{1/2}(\R)},
\end{aligned}
\]
as desired.
\end{proof}

\subsection{High-part strong estimate}

In this subsection we give the proof of Lemma~\ref{lem 3.3}. Throughout the paper, the function $k$ is defined by 
\[
k=\mathcal F_t^{-1}\left(-\chi(\tau)/\tau\right).
\]
Notice that since the function $\ld\mapsto-\chi(\lambda)/\lambda$ is not of $L^1$-class, the function $k$ exhibits a singularity at zero, causing possible obstruction for our analysis. We will partition $k$ into certain pieces, which admit better decay properties, to overcome such difficulty. 

\begin{lemma}\label{lem:H-time-kernel}
There exist functions $k_j$, $j\geq 0$, such that
\(
k=\sum_{j\geq 0}k_j
\)
with $k_j$ satisfying
\[
\operatorname{supp}k_0\subseteq\{|h|\leq 2\},
\qquad
\operatorname{supp}k_j
\subseteq\{2^{j-1}\leq |h|\leq 2^{j+1}\},
\quad j\geq 1.
\]
For every dyadic $N\geq 1$ and $A\in\N$,
\begin{equation}
\label{eq:H-kernel-L1}
\|k_j\|_{L^\infty(\R)}\lesssim 2^{-Aj},\qquad
\|k_j(N\cdot)\|_{L^1(\R)}
\lesssim_A N^{-1}2^{(1-A)j}.
\end{equation}
For each $j\geq 0$, there also exist
$h_j^+,h_j^-\in\mathcal S(\R)$ such that
\[
h_j^+(h)=k_j(h)\quad(h>0),
\qquad
h_j^-(h)=k_j(h)\quad(h<0)
\]
and
\begin{equation}
\label{eq:H-kernel-Fourier-L1}
\|\widehat{h_j^+}\|_{L^1(\R)}
+\|\widehat{h_j^-}\|_{L^1(\R)}
\lesssim_A 2^{-Aj}.
\end{equation}
\end{lemma}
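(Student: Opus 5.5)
We analyse $k=\mathcal F_t^{-1}(m)$ with $m(\tau)=-\chi(\tau)/\tau$ and decompose it dyadically in physical space. The symbol $m$ is smooth, since $\chi$ vanishes on $|\tau|\le1$. It is odd and satisfies the symbol bounds $|\partial_\tau^\alpha m(\tau)|\lesssim_\alpha\langle\tau\rangle^{-1-\alpha}$. The standard consequences are as follows.

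\emph{Away from the origin:} $k$ is smooth on $h\neq0$ and decays faster than any power, $|\partial_h^\beta k(h)|\lesssim_{A,\beta}|h|^{-A}$ for $|h|\ge1$. To see this, integrate by parts $A$ times in $\tau$; every derivative of $m$ is integrable because $m'\in L^1$.

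\emph{Near the origin:} since $m(\tau)=-1/\tau$ for $|\tau|\ge2$, we have $m=-\mathrm{p.v.}\,1/\tau+r$ with $r$ a compactly supported distribution, smooth away from $0$ and of the form $(1-\chi(\tau))\,\mathrm{p.v.}(1/\tau)$. Hence $k(h)=c\,\mathrm{sgn}(h)+\mathcal F^{-1}r$, where $\mathcal F^{-1}r$ is smooth; indeed it is entire and bounded on compact sets. So $k$ is bounded with a jump discontinuity at $h=0$, and $k|_{(0,\infty)}$, $k|_{(-\infty,0)}$ extend smoothly up to $h=0$.

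For the decomposition, fix a smooth partition of unity $1=\rho_0(h)+\sum_{j\ge1}\rho(2^{-j}h)$ with $\rho_0$ supported in $|h|\le2$ and $\rho$ supported in $1/2\le|h|\le2$. Set $k_0=\rho_0k$ and $k_j=\rho(2^{-j}\cdot)k$. The support conditions are then immediate.

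The $L^\infty$ bound follows from the decay of $k$: $\|k_j\|_\infty\lesssim\sup_{|h|\sim2^j}|k|\lesssim 2^{-Aj}$, and $k_0$ is bounded. For the $L^1$ bound, change variables to get $\|k_j(N\cdot)\|_{L^1}=N^{-1}\|k_j\|_{L^1}\le N^{-1}\,|\mathrm{supp}\,k_j|\,\|k_j\|_\infty\lesssim N^{-1}2^{j}2^{-Aj}$. This is exactly \eqref{eq:H-kernel-L1}.

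Constructing the Schwartz extensions $h_j^\pm$ is the only step requiring care.

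For $j\ge1$, $k_j$ vanishes near $h=0$. We take $h_j^+=k_j1_{h>0}$ and $h_j^-=k_j1_{h<0}$; these are smooth and compactly supported, hence Schwartz. For the Fourier bound we use $\|\widehat g\|_{L^1}\lesssim\|g\|_{L^1}+\|g''\|_{L^1}$, which follows from $|\widehat g(\tau)|\lesssim\langle\tau\rangle^{-2}(\|g\|_1+\|g''\|_1)$. The derivative bounds on $k$ on $|h|\sim2^j$, together with $|\partial^\beta\rho(2^{-j}\cdot)|\lesssim1$, give $\|\partial^\beta h_j^\pm\|_{L^1}\lesssim 2^{j}2^{-Aj}$. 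Renaming $A$ yields \eqref{eq:H-kernel-Fourier-L1}.

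For $j=0$, $k_0$ has the jump at the origin, so we cannot simply cut off. Instead, let $g_+$ be the smooth function on $\R$ that equals $c+\mathcal F^{-1}r$ for $h>0$; this is possible because $k|_{(0,\infty)}=c+\mathcal F^{-1}r$ extends smoothly across $0$. Then set $h_0^+=\rho_0\,g_+$, and define $h_0^-$ analogously with $-c$. These are compactly supported, smooth and agree with $k_0$ on the respective half-lines. Their Fourier transforms are integrable by the same $L^1$ estimate on two derivatives, which gives an $O(1)$ bound, consistent with \eqref{eq:H-kernel-Fourier-L1} at $j=0$.

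The main obstacle is thus justifying the exact structure $k=c\,\mathrm{sgn}+\text{smooth}$ near $0$. To do this, write $m=-\mathrm{p.v.}(1/\tau)+(1-\chi)\,\mathrm{p.v.}(1/\tau)$. Use $\mathcal F^{-1}(\mathrm{p.v.}\,1/\tau)=c'\,\mathrm{sgn}(h)$. Then note that $(1-\chi)\,\mathrm{p.v.}(1/\tau)$ is compactly supported, so its inverse Fourier transform is smooth by Paley–Wiener.
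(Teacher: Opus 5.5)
Your proposal is correct and follows essentially the paper's route. Both arguments use the dyadic cutoff $k_j=\eta_j k$, the $L^\infty$ and $L^1$ bounds from rapid decay of $k$ away from $0$, $h_j^\pm$ built from the smooth $k_j$ for $j\ge1$, and for $j=0$ the cutoff times a smooth extension of each one-sided branch of $k$. The Fourier $L^1$ bound also comes from a Sobolev-type estimate in both ($H^1$ in the paper, $W^{2,1}$ for you). The only real difference is how you obtain the structure of $k$. The paper writes $k'=-i\delta_0+ib$ with $b=\mathcal F_t^{-1}(1-\chi)$ Schwartz and integrates from $\pm\infty$, so $k(h)=-i\int_h^\infty b$ for $h>0$; this gives the one-sided extensions and the decay of all derivatives ($\partial_h^\ell k=ib^{(\ell-1)}$ for $h\neq0$) at once. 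Your principal-value/Paley--Wiener splitting plus integration by parts in $\tau$ reaches the same conclusion. One small correction: for $\partial_h^\beta k$ you should integrate by parts against $\tau^\beta m$ rather than $m$, which the symbol bounds make harmless.
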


\begin{proof}
Set
\(
b=\mathcal F_t^{-1}(1-\chi).
\)
By definition, we have 
\(
k'=-i\delta_0+ib
\) holding in the distributional sense. On each of the half-lines $(0,\infty)$ and $(-\infty,0)$,
the function $k$ is therefore smooth and satisfies $k'=ib$.
Since $\widehat k=-\chi(\lambda)/\lambda$ belongs to $L^2(\R)$,
we also have $k\in L^2(\R)$. Consequently,
\[
k(h)=
\begin{cases}
-i\displaystyle\int_h^\infty b(s)\,ds,
& h>0,\\[6pt]
i\displaystyle\int_{-\infty}^h b(s)\,ds,
& h<0.
\end{cases}
\]
Notice that $\widehat{b}\in C_c^\infty(\R)$. It follows from Cauchy--Schwarz that, for every $\ell\geq 0$ and every $A>0$,
\begin{equation}
\label{eq:H-kernel-rapid-decay}
|\partial_h^\ell k(h)|
\lesssim_{\ell,A}(1+|h|)^{-A},
\qquad h\neq 0.
\end{equation}
Choose a smooth dyadic partition of unity
$\sum_{j\geq 0}\eta_j(h)=1$ for $h\neq 0$, with
\begin{align}\label{bound eta}
\operatorname{supp}\eta_0\subseteq\{|h|\leq 2\},
\quad
\operatorname{supp}\eta_j
\subseteq\{2^{j-1}\leq |h|\leq 2^{j+1}\},\quad
|\partial_h^\ell\eta_j(h)|\lesssim_\ell 2^{-j\ell},
\quad j\geq 1
\end{align}
and define $k_j:=\eta_j k$. By
\eqref{eq:H-kernel-rapid-decay}, for any $A>0$, we have
\(
\|k_j\|_{L^\infty(\R)}\lesssim_A 2^{-Aj}, 
\)
which already gives the first estimate in \eqref{eq:H-kernel-L1}. Combining the fact that $|\operatorname{supp}\eta_j|\sim 2^j$ it follows
\[
\|k_j\|_{L^1(\R)}\lesssim 2^{(1-A)j},
\]
which gives \eqref{eq:H-kernel-L1} by a change of variables.

We next prove \eqref{eq:H-kernel-Fourier-L1}. For $j\geq 1$, by noticing that the support of $k_j$ is away from zero, we may set
\[h_j^+(h)=h_j^-(h)=k_j(h).\]
For $j=0$, we simply define
\[
h_0^+(h)=-i\eta_0(h)\int_h^\infty b(s)\,ds,\qquad
h_0^-(h)=i\eta_0(h)\int_{-\infty}^h b(s)\,ds.
\]
By \eqref{eq:H-kernel-rapid-decay} and the derivative bounds for $\eta_j$ given in \eqref{bound eta}, it holds
\(
\|h_j^{\pm}\|_{H^1(\R)}
\lesssim 2^{-Aj}.
\)
Finally, Cauchy--Schwarz and Plancherel give
\[
\|\widehat g\|_{L^1_{\tau}(\R)}\lesssim \|(1+\cdot)^{-1}\|_{L_{\tau}^{2}(\R)}\|(1+\cdot)\widehat g\|_{L_t^2(\R)}
\lesssim \|g\|_{H_t^1(\R)}
\]
for every $g\in H^1(\R)$. Setting $g=h_j^\pm$
proves \eqref{eq:H-kernel-Fourier-L1}.
\end{proof}

\begin{lemma}\label{lem:H-vector-Strichartz}
Let $\{f_N\}_N$ be a finite family of functions on
$\R\times\R^3$. Then
\begin{equation*}
\begin{aligned}
\left\|
\left(
\sum_N
\left|
\int_{s<t}
e^{i(t-s)\Delta_{\R^3}}f_N(s)\,ds
\right|^2
\right)^{1/2}
\right\|_{L_t^2L_x^6(\R)}
\lesssim
\left\|
\left(\sum_N|f_N|^2\right)^{1/2}
\right\|_{L_t^2L_x^{6/5}(\R)}.
\end{aligned}
\end{equation*}
The same estimate holds with the region $s<t$ replaced by $s>t$. 
\end{lemma}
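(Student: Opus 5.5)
The plan is to view the estimate as the endpoint inhomogeneous Strichartz estimate of Keel--Tao on $\R^3$, valued in the Hilbert space $\ell^2_N$. For each $N$, the operator $f_N\mapsto\int_{s<t}e^{i(t-s)\Delta_{\R^3}}f_N(s)\,ds$ is the retarded Duhamel operator. Keel--Tao gives the endpoint bound $L_t^2L_x^{6/5}\to L_t^2L_x^6$ for it in dimension three, with $(q,r)=(2,6)$ on both sides.

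The first step is to form the vector $F=(f_N)_N$ and the operator
\[
\mathcal D F(t)=\Bigl(\int_{s<t}e^{i(t-s)\Delta_{\R^3}}f_N(s)\,ds\Bigr)_N .
\]
Since the family is finite, $F$ takes values in $\C^M$ with the Euclidean norm. The propagator acts diagonally, as $e^{it\Delta}\otimes \mathrm{Id}_{\C^M}$.

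The second step is to check the two hypotheses of the abstract Keel--Tao theorem for $U(t)=e^{it\Delta}\otimes\mathrm{Id}$ on $L^2(\R^3;\C^M)$, with constants independent of $M$.
\begin{itemize}
\item The energy estimate $\|U(t)F\|_{L^2_x\ell^2}=\|F\|_{L^2_x\ell^2}$ holds by Plancherel in each component.
\item The dispersive estimate $\|U(t)U(s)^*G\|_{L_x^\infty\ell^2}\lesssim|t-s|^{-3/2}\|G\|_{L^1_x\ell^2}$ holds because the kernel $(4\pi i(t-s))^{-3/2}e^{i|x-y|^2/4(t-s)}$ is scalar. Minkowski's inequality for the $\ell^2$-valued integral then gives the bound directly.
\end{itemize}

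The Keel--Tao argument never uses scalarity, only these two bounds and the Hilbert structure of the fibre. It therefore yields the vector-valued endpoint estimate
\[
\|\mathcal DF\|_{L_t^2L_x^6\ell^2}\lesssim\|F\|_{L_t^2L_x^{6/5}\ell^2}.
\]
This covers both the retarded ($s<t$) and the advanced ($s>t$) truncation, which is exactly the claim. A constant independent of $M$ lets one pass to arbitrary finite families.

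The main obstacle is justifying that the endpoint bilinear (Whitney/atomic) decomposition in Keel--Tao goes through for Hilbert-space valued functions. In the endpoint proof, the $L^{r'}\to L^r$ bounds, obtained by interpolating the energy and dispersive estimates, are needed with $\ell^2$-valued Lebesgue spaces. This is fine because complex interpolation of $L^p(\ell^2)$ spaces behaves as in the scalar case, giving $L^{p}_x\ell^2$, $1\le p\le2$. The real-interpolation and atomic steps rely only on these mixed-norm bounds.

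The truncation $s<t$ is handled inside Keel--Tao itself, since their inhomogeneous endpoint bound is proved for the truncated form. Alternatively, one can note that the retarded estimate follows from the bilinear form estimate for $T_{s<t}$ in their Section 7, again with vector values.
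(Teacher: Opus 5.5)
Your proposal is correct, but it takes a considerably heavier route than the paper. The paper stays entirely in the scalar setting. It applies the scalar endpoint Keel--Tao estimate to each $f_N$ separately and then uses Minkowski's inequality twice, exploiting that the output exponents $(2,6)$ are both $\ge 2$ while the input exponents $(2,6/5)$ are both $\le 2$. Writing $D$ for the retarded Duhamel operator $f\mapsto\int_{s<t}e^{i(t-s)\Delta_{\R^3}}f(s)\,ds$, the chain is
\[
\Bigl\|\Bigl(\sum_N|Df_N|^2\Bigr)^{1/2}\Bigr\|_{L_t^2L_x^6}\le\Bigl(\sum_N\|Df_N\|_{L_t^2L_x^6}^2\Bigr)^{1/2}\lesssim\Bigl(\sum_N\|f_N\|_{L_t^2L_x^{6/5}}^2\Bigr)^{1/2}\le\Bigl\|\Bigl(\sum_N|f_N|^2\Bigr)^{1/2}\Bigr\|_{L_t^2L_x^{6/5}}.
\]
The first step is the triangle inequality in $L_x^3$ for $\sum_N|Df_N|^2$ at each fixed $t$. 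The last step is the reverse triangle inequality in $L_x^{3/5}$ for the nonnegative functions $|f_N|^2$, again at each fixed $t$.

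So the step you identify as the main obstacle, rerunning the endpoint bilinear interpolation argument in $\ell^2$-valued Bochner spaces, is avoided altogether. Your route does go through:
\begin{itemize}
\item the energy and dispersive bounds for $e^{it\Delta}\otimes\mathrm{Id}$ hold uniformly in $M$;
\item complex and real interpolation of $L^p(\ell^2)$ behave as in the scalar case;
\item the bilinear real interpolation used by Keel--Tao holds for arbitrary Banach couples;
\item $L^{6/5}(\ell^2)\hookrightarrow L^{6/5,2}(\ell^2)$.
\end{itemize}
What your approach buys is generality. For example, it covers propagators acting non-diagonally on the fibre, provided they satisfy a uniform operator-valued dispersive bound. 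For the diagonal propagator here, the two-line Minkowski reduction suffices. It in fact yields the $\ell^2$-valued extension of any Strichartz estimate, since admissible exponents are always $\ge 2$ and their duals $\le 2$.
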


\begin{proof}
We recall the classical Strichartz estimate on $\R^3$, see e.g. Keel and Tao \cite{EndpointStrichartz}:
\begin{equation*}
\left\|
\int_{s<t}e^{i(t-s)\Delta_{\R^3}}f(s)\,ds
\right\|_{L_t^2L_x^6(\R)}
\lesssim
\|f\|_{L_t^2L_x^{6/5}(\R)}.
\end{equation*}
Two applications of Minkowski's inequality then yield
\begin{align*}
&\left\|
\left(
\sum_N
\left|
\int_{s<t}
e^{i(t-s)\Delta_{\R^3}}f_N(s)\,ds
\right|^2
\right)^{1/2}
\right\|_{L_t^2L_x^6(\R)}\\
&\qquad\leq
\left(
\sum_N
\left\|
\int_{s<t}
e^{i(t-s)\Delta_{\R^3}}f_N(s)\,ds
\right\|_{L_t^2L_x^6(\R)}^2
\right)^{1/2}
\\
&\qquad\lesssim
\left(
\sum_N
\|f_N\|_{L_t^2L_x^{6/5}(\R)}^2
\right)^{1/2}
\leq
\left\|
\left(\sum_N|f_N|^2\right)^{1/2}
\right\|_{L_t^2L_x^{6/5}(\R)}.
\end{align*}
The case over $s>t$ follows verbatim, we omit the repeating details here.
\end{proof}

In the following we shall also use $e^{it\Delta_{\R^3}}$ to denote the Schr\"odinger propagators on $\R^3$. In particular, the spatial norm in this case shall also be taken over $\R^3$. The strategy for proving the claims will then be first establishing estimates w.r.t. the spatial domain $\R^3$. At the end, we turn such Euclidean estimates to the ones on $\T^3$ by combining with certain extension arguments.       

For $j\geq 0$ and $f:\R\times\R^3\to\C$, define
\[
\mathcal H_{j,N}^{\R^3}f(t)
=
\int_{\R}
k_j(Nh)e^{ih\Delta_{\R^3}}
P_Nf(t-h)\,dh
\]
and
\[
\mathcal H_j^{\R^3}f
=\sum_N\mathcal H_{j,N}^{\R^3}f.
\]
Lemma \ref{lem:H-time-kernel} and Lemma \ref{lem:H-vector-Strichartz} then yield useful decay mapping properties for the operator $H_j^{\R^3}$, as stated in the following lemma.

\begin{lemma}
\label{lem:H-Euclidean}
We have
\begin{equation}
\label{eq:H-Euclidean-estimate}
\|\mathcal H_j^{\R^3}f\|_{L_t^2L_x^6(\R\times\R^3)}
\lesssim
2^{-Aj}
\|f\|_{L_t^2L_x^{6/5}(\R\times\R^3)}.
\end{equation}
\end{lemma}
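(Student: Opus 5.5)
The plan is to split the time kernel into its forward and backward halves and reduce each half to Lemma \ref{lem:H-vector-Strichartz} by writing the kernel as a superposition of time modulations. Fix $j$ and $N$. Write $k_j(Nh)=h_j^+(Nh)1_{h>0}+h_j^-(Nh)1_{h<0}$, using Lemma \ref{lem:H-time-kernel}. By Fourier inversion,
\[
h_j^\pm(Nh)=\frac{1}{2\pi}\int_{\R}\widehat{h_j^\pm}(\lambda)e^{iN\lambda h}\,d\lambda .
\]
Substitute this into $\mathcal H_{j,N}^{\R^3}f$ and set $s=t-h$. The forward part becomes
\[
\frac{1}{2\pi}\int_\R \widehat{h_j^+}(\lambda)\,e^{iN\lambda t}\int_{s<t}e^{i(t-s)\Delta_{\R^3}}\bigl(e^{-iN\lambda s}P_Nf(s)\bigr)\,ds\,d\lambda .
\]
The backward part has the same form with $s>t$ and $h_j^-$. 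The key point is that the modulation $e^{iN\lambda h}$ factors into a phase in $t$, which has modulus one, and a phase in $s$, which can be absorbed into the forcing term.

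Next I sum over $N$. The Littlewood--Paley square function estimate on $\R^3$ in $L^6_x$ gives
\[
\|\mathcal H_j^{\R^3}f\|_{L^2_tL^6_x}\lesssim\Bigl\|\Bigl(\sum_N|\mathcal H_{j,N}^{\R^3}f|^2\Bigr)^{1/2}\Bigr\|_{L^2_tL^6_x}.
\]
I reduce to finitely many $N$ by monotone convergence. The $\lambda$-integral is an average against the measures $|\widehat{h_j^\pm}|\,d\lambda$, so Minkowski's inequality in $\ell^2_N$ and then in $L^2_tL^6_x$ moves $\int|\widehat{h_j^\pm}(\lambda)|\,d\lambda$ outside. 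There is one subtlety: the modulation frequency $N\lambda$ depends on $N$. So for fixed $\lambda$ I apply Lemma \ref{lem:H-vector-Strichartz} to the family $f_N^\lambda(s)=e^{-iN\lambda s}P_Nf(s)$. This family has the same pointwise square function as $\{P_Nf\}$, since $|f_N^\lambda|=|P_Nf|$, and the outer phases $e^{iN\lambda t}$ drop out inside the absolute values. The result is
\[
\|\mathcal H_j^{\R^3}f\|_{L^2_tL^6_x}\lesssim\bigl(\|\widehat{h_j^+}\|_{L^1}+\|\widehat{h_j^-}\|_{L^1}\bigr)\Bigl\|\Bigl(\sum_N|P_Nf|^2\Bigr)^{1/2}\Bigr\|_{L^2_tL^{6/5}_x}.
\]

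Finally, I use the Littlewood--Paley square function bound in $L^{6/5}_x(\R^3)$, which holds because $1<6/5<\infty$, together with \eqref{eq:H-kernel-Fourier-L1}. This gives the factor $2^{-Aj}$ and hence \eqref{eq:H-Euclidean-estimate}.

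The main obstacle is the $N$-dependence of the modulation $e^{iN\lambda h}$, which arises from the rescaling $k_j(N\cdot)$. It is handled by the trick above: for fixed $\lambda$, moving the phase onto the forcing preserves the square function. A second point is that the cutoff $1_{h>0}$ in the forward half cannot be smoothed. This is exactly why we need the retarded (Christ--Kiselev-free, Keel--Tao endpoint) estimate in Lemma \ref{lem:H-vector-Strichartz}, rather than a $TT^*$ bound for the full integral $\int_\R$.
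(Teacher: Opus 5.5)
Your proposal is correct and follows essentially the same route as the paper. Both split into $h>0$ and $h<0$, and both use Fourier inversion of $h_j^\pm(N(t-s))$ so that the modulation becomes a unimodular phase in $t$ times a phase absorbed into the forcing, followed by Minkowski, Lemma \ref{lem:H-vector-Strichartz}, \eqref{eq:H-kernel-Fourier-L1} and Littlewood--Paley. You also make explicit two points the paper leaves tacit: the reverse square-function bound that passes from $\mathcal H_j^{\R^3}f$ to $\bigl(\sum_N|\mathcal H_{j,N}^{\R^3}f|^2\bigr)^{1/2}$, and the observation that $|e^{-iN\lambda s}P_Nf|=|P_Nf|$, which makes the $N$-dependence of the modulation harmless.
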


\begin{proof}
We first split the time integral into its positive and
negative parts respectively:
\begin{align*}
\mathcal H_{j,N}^{\R^3,+}f(t)&
=
\int_{h>0}
k_j(Nh)e^{ih\Delta_{\R^3}}
P_Nf(t-h)\,dh,\\ 
\mathcal H_{j,N}^{\R^3,-}f(t)&
=
\int_{h<0}
k_j(Nh)e^{ih\Delta_{\R^3}}
P_Nf(t-h)\,dh.
\end{align*}
It suffices to consider the positive part $H_{j,N}^{\R^3,+}F(t)$, the negative part $H_{j,N}^{\R^3,-}F(t)$ can be dealt similarly with. The change of variables $s=t-h$ and the identity $k_j(h)=h_j^+(h)$ for $h>0$ give
\[
\mathcal H_{j,N}^{\R^3,+}f(t)
=
\int_{s<t}
h_j^+(N(t-s))
e^{i(t-s)\Delta_{\R^3}}
{P_N}f(s)\,ds.
\]
By Fourier inversion,
\begin{align*}
\begin{aligned}
\mathcal H_{j,N}^{\R^3,+}f(t)
&=
\frac{1}{2\pi}
\int_{\R}
\widehat{h_j^+}(\nu)e^{i\nu Nt}
\\
&\quad\times
\left(
\int_{s<t}
e^{i(t-s)\Delta_{\R^3}}
e^{-i\nu Ns}{P_N}f(s)\,ds
\right)d\nu.
\end{aligned}
\end{align*}
Now Minkowski, Lemma~\ref{lem:H-vector-Strichartz},
\eqref{eq:H-kernel-Fourier-L1} and the Littlewood-Paley inequality imply
\begin{align*}
\left\|
\left(
\sum_N
|\mathcal H_{j,N}^{\R^3,+}f|^2
\right)^{1/2}
\right\|_{L_t^2L_x^6(\R\times\R^3)}
&\lesssim
\|\widehat{h_j^+}\|_{L^1(\R)}
\left\|
\left(
\sum_N|{P_N}f|^2
\right)^{1/2}
\right\|_{L_t^2L_x^{6/5}(\R\times\R^3)}
\\
&\lesssim
2^{-Aj}
\left\|
\left(
\sum_N|{P_N}f|^2
\right)^{1/2}
\right\|_{L_t^2L_x^{6/5}(\R\times\R^3)}\\
&\lesssim 2^{-Aj}\|f\|_{L_t^2L_x^{6/5}(\R\times\R^3)},
\end{align*}
as claimed.
\end{proof}

By suitable extension argument we may transform Lemma \ref{lem:H-Euclidean} to a periodic version, as stated in the following lemma.

\begin{lemma}
\label{lem 3.10}
Let $\Omega=[-\pi,\pi]^3$ and let $F_{\mathrm{per}}$ denote the $2\pi$-periodic extension of a function  $F:\T^3\to \C$. For $j\geq 0$ and $f:\R\times\T^3\to\C$, define
\begin{align*}
\mathcal H_j^{\T^3}f(t)
&:=\sum_N\int_{\R}k_j(Nh)
e^{ih\Delta_{\T^3}}P_N f(t-h)\,dh.
\end{align*}
Choose $\rho_j\in C_c^\infty(\R^3)$ such that
$0\leq\rho_j\leq1$ and
\[
\rho_j(x)=1
\quad\text{for }|x|_\infty\leq C_*2^j,
\qquad
\rho_j(x)=0
\quad\text{for }|x|_\infty\geq 2C_*2^j,
\]
where $C_*>0$ is a sufficiently large fixed constant. On $\R\times\Omega$, also define the remainder operator $R_j^{\T^3}$ via
\begin{equation}
\mathcal H_j^{\T^3}f
=
\mathcal H_j^{\R^3}(\rho_jf_{\mathrm{per}})
+\mathcal R_j^{\T^3}f.
\label{eq:exact-periodization}
\end{equation}
Then for every $1\leq q\leq \infty$, $1<p<\infty$, $L>3$, $A>0$ we have
\begin{align}
\|\rho_jf_{\mathrm{per}}\|_{L_t^qL_x^p(\R\times\R^3)}
&\lesssim 2^{3j/p}
\|f\|_{L_t^qL_x^p(\R\times\T^3)},
\label{eq:extension-strong}\\
\|\rho_jf_{\mathrm{per}}\|_{L_t^q L_x^{p,\infty}(\R\times\R^3)}
&\lesssim 2^{3j/p}
\|f\|_{L_t^q L_x^{p,\infty}(\R\times\T^3)},
\label{eq:extension-weak}\\
\|\mathcal R_j^{\T^3}f\|_{L_t^qL_x^\infty(\R\times\Omega)}
&\lesssim_{A,L,p}
2^{(4-L-A)j}
\|f\|_{L_t^qL_x^p(\R\times\T^3)},
\label{eq:remainder-strong}\\
\|\mathcal R_j^{\T^3}f\|_{L_t^q L_x^\infty(\R\times\Omega)}
&\lesssim_{A,L,p}
2^{(4-L-A)j}
\|f\|_{L_t^q L_x^{p,\infty}(\R\times\T^3)}.
\label{eq:remainder-weak}
\end{align}
\end{lemma}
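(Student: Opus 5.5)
The plan is to treat the two extension bounds by counting periodic copies. The remainder bounds will come from writing $\mathcal R_j^{\T^3}$ exactly as a Euclidean operator acting on the far-away copies of $f$, and then using non-stationary phase.

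\emph{Extension bounds \eqref{eq:extension-strong}, \eqref{eq:extension-weak}.} Since $\operatorname{supp}\rho_j\subseteq\{|x|_\infty\le 2C_*2^j\}$, this support is covered by $O(2^{3j})$ translates $\Omega+2\pi m$, $m\in\Z^3$. Using $0\le\rho_j\le1$ and periodicity, for each fixed $t$
\[
\int_{\R^3}|\rho_jf_{\mathrm{per}}(t)|^p\,dx\le\sum_{|m|_\infty\lesssim 2^j}\int_{\Omega+2\pi m}|f_{\mathrm{per}}(t)|^p\,dx\lesssim 2^{3j}\|f(t)\|_{L^p(\T^3)}^p .
\]
Taking the $L_t^q$ norm gives \eqref{eq:extension-strong}. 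For the Lorentz bound I use the same count on distribution functions:
\[
|\{x:|\rho_jf_{\mathrm{per}}(t,x)|>\lambda\}|\lesssim 2^{3j}\,|\{x\in\T^3:|f(t,x)|>\lambda\}| .
\]
Then $\sup_\lambda\lambda\,|\{\cdot\}|^{1/p}$ picks up the factor $2^{3j/p}$, which proves \eqref{eq:extension-weak}.

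\emph{Identifying the remainder.} For a $2\pi$-periodic function $F$, viewed as a tempered distribution on $\R^3$, both $e^{ih\Delta_{\R^3}}$ and $P_N$ act by the same Fourier multiplier as on $\T^3$. Hence on $\Omega$ we have $e^{ih\Delta_{\T^3}}P_NF=e^{ih\Delta_{\R^3}}P_NF_{\mathrm{per}}$. Consequently, on $\R\times\Omega$,
\[
\mathcal R_j^{\T^3}f=\mathcal H_j^{\R^3}\bigl((1-\rho_j)f_{\mathrm{per}}\bigr).
\]
This expression only sees points $y$ with $|y|_\infty\ge C_*2^j$, so $|x-y|\gtrsim C_*2^j$ for $x\in\Omega$.

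\emph{Kernel bound (main step).} Let $K_{h,N}(z)=\int e^{i(z\cdot\xi-h|\xi|^2)}\phi(\xi/N)\,d\xi$ be the kernel of $e^{ih\Delta_{\R^3}}P_N$. On the support of $k_j(Nh)$ we have $|h|\le 2^{j+1}/N$, so $|2h\xi|\lesssim 2^{j+2}$ on $|\xi|\sim N$. Choosing $C_*$ large, the phase gradient satisfies $|z-2h\xi|\ge|z|/2$ whenever $|z|\ge C_*2^j/2$. I then integrate by parts $L$ times with the operator $\frac{(z-2h\xi)\cdot\nabla_\xi}{i|z-2h\xi|^2}$. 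Each $\xi$-derivative falling on $\phi(\xi/N)$ costs $N^{-1}$. Each derivative falling on the coefficients gains $|h|/|z|\lesssim 2^j/(N|z|)\lesssim N^{-1}$ relative to $|z|^{-1}$. This yields
\[
|K_{h,N}(z)|\lesssim_L N^{3-L}|z|^{-L}
\]
uniformly in $h\in\operatorname{supp}k_j(N\cdot)$. The low block $P_{\le1}$ is treated the same way with $N=1$. I expect this uniform non-stationary phase estimate to be the main obstacle: it is the only place where the size of $C_*$ and the support property of $k_j$ from Lemma~\ref{lem:H-time-kernel} are genuinely used.

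\emph{Summation.} Inserting the kernel bound, I obtain for $x\in\Omega$
\[
|\mathcal R_j^{\T^3}f(t,x)|\lesssim\sum_N N^{3-L}\int_\R|k_j(Nh)|\int_{|x-y|\gtrsim 2^j}|x-y|^{-L}|f_{\mathrm{per}}(t-h,y)|\,dy\,dh .
\]
I split the $y$-integral over the translates $\Omega+2\pi m$ with $|m|\gtrsim 2^j$; this is where $L>3$ is needed to sum $\sum_{|m|\gtrsim2^j}|m|^{-L}$. It bounds the inner integral by
\[
2^{(3-L)j}\|f(t-h)\|_{L^1(\T^3)}\lesssim 2^{(3-L)j}\|f(t-h)\|_{L^p(\T^3)},
\]
and also by $2^{(3-L)j}\|f(t-h)\|_{L^{p,\infty}(\T^3)}$, since $L^{p,\infty}(\T^3)\hookrightarrow L^1(\T^3)$ for $p>1$. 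Young's inequality in time together with \eqref{eq:H-kernel-L1}, namely $\|k_j(N\cdot)\|_{L^1}\lesssim N^{-1}2^{(1-A)j}$, then gives the bound
\[
\sum_N N^{2-L}\,2^{(4-L-A)j}\|f\|_{L_t^qL_x^p}.
\]
The sum over $N$ converges because $L>3$. This proves \eqref{eq:remainder-strong} and, with the Lorentz input, \eqref{eq:remainder-weak}.
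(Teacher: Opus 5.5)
Your proposal is correct and follows essentially the same route as the paper. The shared steps are:
\begin{itemize}
\item counting the $O(2^{3j})$ periodic cells to get the extension bounds;
\item rewriting the remainder as the Euclidean operator applied to $(1-\rho_j)f_{\mathrm{per}}$ (you do this via the Fourier transform of periodic distributions, the paper via Poisson summation; these give the same identity);
\item the non-stationary phase bound $|K_N(h,z)|\lesssim_L N^{3-L}|z|^{-L}$ on the support of $k_j(N\cdot)$;
\item summing over cells and applying Young's inequality in time with \eqref{eq:H-kernel-L1}.
\end{itemize}
Your treatment of the integration by parts, where the phase gradient $z-2h\xi$ stays comparable to $|z|$ once $C_*$ is large, is more explicit than the paper's.
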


\begin{remark}
We will only use strong estimates \eqref{eq:extension-strong} and \eqref{eq:remainder-strong} in the proof of Lemma \ref{lem 3.3}, while \eqref{eq:extension-weak} and \eqref{eq:remainder-weak} will be used in the proof of Lemma \ref{lem 3.4}.
\end{remark}

\begin{proof}
We first establish
\eqref{eq:extension-strong} and
\eqref{eq:extension-weak}.
Since $\rho_j$ is supported in a box of
side length $O(2^j)$, its support intersects
at most $O(2^{3j})$ translates of $\Omega$.
By periodicity, for every fixed $t$,
\[
\|\rho_jF_{\mathrm{per}}(t)\|_{L_x^p(\R^3)}^p
\lesssim
2^{3j}\|F(t)\|_{L_x^p(\T^3)}^p.
\]
Taking the $L_t^q$ norm yields
\eqref{eq:extension-strong}. To prove the weak estimate, observe that
for every $\lambda>0$,
\[
\begin{aligned}
\bigl|\{x\in\R^3:
|\rho_j(x)F_{\mathrm{per}}(t,x)|>\lambda\}\bigr|
\lesssim
2^{3j}
\bigl|\{x\in\Omega:
|F(t,x)|>\lambda\}\bigr|.
\end{aligned}
\]
Taking the supremum over $\lambda>0$
in the definition of the weak $L^p$ norm
gives
\[
\|\rho_jF_{\mathrm{per}}(t)\|_{L_x^{p,\infty}(\R^3)}
\lesssim
2^{3j/p}
\|F(t)\|_{L_x^{p,\infty}(\T^3)}.
\]
Again, taking the $L_t^q$ norm proves \eqref{eq:extension-weak}.

To prove \eqref{eq:remainder-strong} and \eqref{eq:remainder-weak}, notice first that it suffices to only prove \eqref{eq:remainder-weak} using the embedding $L^p\hookrightarrow L^{p,\infty}$. To proceed, first write down an explicit formula for $\mathcal R_j^{\T^3}f$ via \eqref{eq:exact-periodization}. For each dyadic $N\geq1$, denote by
\begin{align}\label{kn kernel}
K_N^{\R^3}(h,z)
:=
\frac{1}{(2\pi)^3}
\int_{\R^3}
e^{iz\cdot\xi-ih|\xi|^2}
\phi_N(\xi)\,d\xi
\end{align}
the kernel of
$e^{ih\Delta_{\R^3}}P_N^{\R^3}$. Poisson summation therefore gives
\begin{equation}
\sum_{\ell\in\Z^3}
K_N^{\R^3}(h,z+2\pi\ell)
=
\frac{1}{(2\pi)^3}
\sum_{n\in\Z^3}
e^{in\cdot z-ih|n|^2}\phi_N(n).
\label{eq:Poisson-periodization}
\end{equation}
Now using \eqref{eq:Poisson-periodization} and the definition of $e^{it\Delta_{\T^3}}P_N$, we obtain, for $x\in\Omega$, that
\begin{align}
\begin{aligned}
&e^{ih\Delta_{\T^3}}P_N f(t-h,x)=
\sum_{n\in\Z^3}
e^{in\cdot x-ih|n|^2}\phi_N(n)\widehat{f}(t-h,n).
\\
&\quad=
\int_\Omega
\sum_{\ell\in\Z^3}
K_N^{\R^3}(h,x-y+2\pi\ell)
f(t-h,y)\,dy
\\
&\quad=
\int_{\R^3}
K_N^{\R^3}(h,x-y)
f_{\mathrm{per}}(t-h,y)\,dy.
\end{aligned}
\label{eq:periodic-kernel-identity}
\end{align}
By decomposing 
\[
f_{\mathrm{per}}
=
\rho_jf_{\mathrm{per}}
+(1-\rho_j)f_{\mathrm{per}}
\]
in \eqref{eq:periodic-kernel-identity} we obtain an explicit formula for $R_j^{\T^3} f$, stated as
\begin{equation}
\begin{aligned}
\mathcal R_j^{\T^3}f(t,x)
&=
\sum_N\int_{\R}k_j(Nh)
\int_{\R^3}
K_N^{\R^3}(h,x-y)
(1-\rho_j(y))
f_{\mathrm{per}}(t-h,y)\,dy\,dh\\
&
=
\sum_N\int_{\R}k_j(Nh)
\sum_{\ell\in\Z^3}\int_{\Omega+2\pi\ell}
K_N^{\R^3}(h,x-y)
(1-\rho_j(y))
f_{\mathrm{per}}(t-h,y)\,dy\,dh.
\end{aligned}
\label{eq:periodization-remainder}
\end{equation}
It remains to prove \eqref{eq:remainder-strong} and \eqref{eq:remainder-weak} by making use of \eqref{eq:periodization-remainder}. First recall that the support properties of $k_j$ imply
\[
k_j(Nh)\neq0
\quad\Longrightarrow\quad
|h|N\lesssim 2^j.
\]
On the other hand, if $x\in\Omega$ and $1-\rho_j(y)\neq0$, then
\[
|x-y|\gtrsim C_*2^j,
\]
provided $C_*$ is chosen sufficiently large. For any $L\in\N$, repeated integration by parts in $\xi$ in \eqref{kn kernel} gives 
\begin{equation*}
|K_N^{\R^3}(h,z)|
\lesssim_L N^{3-L}|z|^{-L}
\end{equation*}
whenever $k_j(Nh)\neq0$, $x\in\Omega$, and $1-\rho_j(y)\neq0$. Setting $z=x-y$ with $x\in\Omega$ and $1-\rho_j(y)\neq0$ it then follows
\begin{equation*}
|K_N^{\R^3}(h,x-y)|
\lesssim_L N^{3-L}|x-y|^{-L}.
\end{equation*}
For $L>3$, summing the spatial decay over those fundamental cells which
intersect the support of $1-\rho_j$
gives, uniformly for $x\in\Omega$,
\[
\sum_{\ell\in\Z^3}
\sup_{\substack{y\in\Omega+2\pi\ell\\
1-\rho_j(y)\neq0}}
|x-y|^{-L}
\lesssim_L
2^{(3-L)j}.
\]
Next, partition the $y$-integration in \eqref{eq:periodization-remainder} into the translates
$\Omega+2\pi\ell$, $\ell\in\Z^3$. By periodicity and H\"older in weak $L^p$-norm, stated as 
\[\|F\|_{L_x^1}\lesssim \|F\|_{L_x^{p,\infty}}\|1\|_{L_x^{p',1}}\lesssim \|F\|_{L_x^{p,\infty}},\]
we have for $1<p\leq \infty$
\[
\int_{\Omega+2\pi\ell}
|f_{\mathrm{per}}(t-h,y)|\,dy
\lesssim_p
\|f(t-h)\|_{L_x^{p,\infty}(\T^3)}.
\]
Summing up, we conclude that
\begin{equation}
\begin{aligned}
|\mathcal R_j^{\T^3}Ff(t,x)|
\lesssim_L 
2^{(3-L)j}
\sum_N N^{3-L}
\int_{\R}|k_j(Nh)|
\|f(t-h)\|_{L_x^{p,\infty}(\T^3)}\,dh.
\end{aligned}
\label{eq:remainder-pointwise}
\end{equation}
Applying Minkowski, Young's convolution inequality
in time and also \eqref{eq:H-kernel-L1} to \eqref{eq:remainder-pointwise},
we obtain
\begin{align*}
\|\mathcal R_j^{\T^3}f
\|_{L_t^qL_x^\infty(\Omega)}
&\lesssim_L
2^{(3-L)j}
\sum_N N^{3-L}
\|k_j(N\cdot)\|_{L^1(\R)}
\|f\|_{L_t^qL_x^{p,\infty}(\T^3)}\\
&\lesssim_{A,L}
2^{(4-L-A)j}
\left(\sum_N N^{2-L}\right)
\|f\|_{L_t^qL_x^{p,\infty}(\T^3)}
\lesssim
2^{(4-L-A)j}\|f\|_{L_t^qL_x^{p,\infty}(\T^3)},
\end{align*}
as desired.
\end{proof}

\begin{proof}[Proof of Lemma~\ref{lem 3.3}]
Recall that $k=\sum_{j\geq0}k_j$. By the definitions
of $\mathcal H$ and $\mathcal H_j^{\T^3}$, we have
\[
\mathcal Hf=\sum_{j\geq0}\mathcal H_j^{\T^3}f.
\]
Using the identity
\eqref{eq:exact-periodization} and triangular inequality, it holds
\begin{align*}
\|\mathcal H_j^{\T^3}f\|_{L_t^2L_x^6(\R\times\T^3)}
\lesssim
\|\mathcal H_j^{\R^3}(\rho_jf_{\mathrm{per}})\|
_{L_t^2L_x^6(\R\times\R^3)}
+
\|\mathcal R_j^{\T^3}f\|
_{L_t^2L_x^\infty(\R\times\Omega)}.
\end{align*}
For the first term, applying
\eqref{eq:H-Euclidean-estimate} and
\eqref{eq:extension-strong} with $p=6/5$ gives
\begin{align*}
\|\mathcal H_j^{\R^3}(\rho_jf_{\mathrm{per}})\|
_{L_t^2L_x^6(\R\times\R^3)}
\lesssim 
2^{(-A+5/2)j}
\|f\|_{L_t^2L_x^{6/5}(\R\times\T^3)}.
\end{align*}
For the remainder, we apply
\eqref{eq:remainder-strong} with
$p=6/5$ and $q=2$ to obtain
\[
\|\mathcal R_j^{\T^3}f\|
_{L_t^2L_x^\infty(\R\times\Omega)}
\lesssim
2^{(4-L-A)j}
\|f\|_{L_t^2L_x^{6/5}(\R\times\T^3)}.
\]
By choosing $L$ and $A$ sufficiently large, this in turn implies
\[
\|\mathcal H_j^{\T^3}f\|_{L_t^2L_x^6(\R\times\T^3)}
\lesssim
2^{-A'j}
\|f\|_{L_t^2L_x^{6/5}(\R\times\T^3)}.
\]
for some $A'>0$. Summing over $j$ yields the desired claim.
\end{proof}

\subsection{High-part weak estimate}
In this subsection we give the proof of Lemma \ref{lem 3.4}. We begin with a global space-time estimate. 

\begin{lemma}
\label{lem:global-free-Lorentz}
For $g:\R\times\R^3\to\C$ we have
\begin{equation}
\label{eq:global-free-Lorentz}
\|e^{it\Delta_{\R^3}}g\|_{L_t^1L_x^{15/2,1}(\R\times\R^3)}
\lesssim
\|g\|_{L^{5/4,1}(\R^3)}.
\end{equation}
\end{lemma}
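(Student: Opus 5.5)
The plan is to obtain \eqref{eq:global-free-Lorentz} by real interpolation from a dyadic-in-time decomposition of the free flow. A direct Hölder-in-time argument cannot work. The fixed-time operator $e^{it\Delta_{\R^3}}$ is not bounded from $L^{5/4}$ to $L^{15/2}$, since these exponents are not dual. Moreover, the pair $(5/4,15/2)$ sits exactly at the scaling $|t|^{-1}$, because $\tfrac32(\tfrac45-\tfrac2{15})=1$, and $|t|^{-1}$ just fails to be integrable. The Lorentz refinements $L^{5/4,1}$ and $L^{15/2,1}$ are what should compensate for this logarithmic failure.

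\textbf{Step 1 (dyadic pieces).} For $j\in\Z$ set $T_jg=1_{\{2^j\le|t|<2^{j+1}\}}e^{it\Delta_{\R^3}}g$, so that $e^{it\Delta_{\R^3}}g=\sum_jT_jg$. I will record three elementary bounds, writing $(1/a,1/b)$ for the exponents in $L^a_x\to L^1_tL^b_x$.
\begin{itemize}
\item Dispersive, at $(1,0)$: $\|T_jg\|_{L^1_tL^\infty_x}\lesssim 2^{j}2^{-3j/2}\|g\|_{L^1}=2^{-j/2}\|g\|_{L^1}$.
\item Unitarity, at $(\tfrac12,\tfrac12)$: $\|T_jg\|_{L^1_tL^2_x}\lesssim 2^{j}\|g\|_{L^2}$.
\item Endpoint Strichartz of Keel--Tao with Hölder on the time interval, at $(\tfrac12,\tfrac16)$: $\|T_jg\|_{L^1_tL^6_x}\lesssim 2^{j/2}\|g\|_{L^2}$.
\end{itemize}
By Riesz--Thorin (complex interpolation of mixed norms), $T_j$ maps $L^a\to L^1_tL^b_x$ with norm $\lesssim 2^{j\beta(1/a,1/b)}$ for every $(1/a,1/b)$ in the closed triangle with these three vertices. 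Here $\beta$ is the affine function through the vertex data, namely $\beta(x,y)=1-\tfrac32x+\tfrac32y$. The target point $(\tfrac45,\tfrac2{15})$ lies in the interior of the triangle: at $x=4/5$ the triangle covers $1/15\le y\le 1/5$. It also satisfies $\beta=0$, so it lies on the zero-level line $y=x-\tfrac23$.

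\textbf{Step 2 (Bourgain's summation trick).} I will pick two points $P_0,P_1$ in the interior of the triangle on a short segment through the target that is transversal to the zero line, with $\beta(P_0)<0<\beta(P_1)$. The abstract lemma says that if $\|T_j\|_{A_0\to B_0}\lesssim 2^{j\beta_0}$ and $\|T_j\|_{A_1\to B_1}\lesssim 2^{j\beta_1}$ with $\beta_0<0<\beta_1$, then $\sum_jT_j$ maps $(A_0,A_1)_{\theta,1}\to(B_0,B_1)_{\theta,\infty}$ at the $\theta$ where $\beta$ vanishes. The proof splits $\sum_j$ at the scale dictated by the $K$-functional. I will apply it with $A_i=L^{a_i}$ and $B_i=L^1_tL^{b_i}_x$, using that $(L^1_tX_0,L^1_tX_1)_{\theta,\infty}\hookrightarrow L^1_t(X_0,X_1)_{\theta,\infty}$. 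This yields a restricted-type estimate
\[
\|e^{it\Delta_{\R^3}}g\|_{L^1_tL_x^{b,\infty}}\lesssim\|g\|_{L^{a,1}}
\]
at every point $(1/a,1/b)$ of the zero line lying in the interior of the triangle.

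\textbf{Step 3 (upgrade to $L^{15/2,1}$).} I will take two such points $Q_0,Q_1$ on the zero line, one on each side of $(\tfrac45,\tfrac2{15})$, and interpolate the two weak-type estimates from Step 2 by the real method with second index $1$. On the input side, reiteration gives $(L^{a_0,1},L^{a_1,1})_{\eta,1}=L^{5/4,1}$. On the output side, since the outer exponent is $1$, we have $(L^1_tL^{b_0,\infty}_x,L^1_tL^{b_1,\infty}_x)_{\eta,1}=L^1_t(L^{b_0,\infty},L^{b_1,\infty})_{\eta,1}=L^1_tL^{15/2,1}_x$. This gives \eqref{eq:global-free-Lorentz}.

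The main obstacle is this last identification. One has to check that real interpolation commutes with the outer $L^1_t$ when the second index is $1$ (the Lions--Peetre identity $(L^p(X_0),L^p(X_1))_{\theta,p}=L^p((X_0,X_1)_{\theta,p})$ with $p=1$), and that the two endpoints $Q_0,Q_1$ really lie inside the region where Step 1 applies. If the vector-valued identity causes trouble, I would fall back on proving the bilinear dual form $|\iint F\,e^{it\Delta}g|\lesssim\|g\|_{L^{5/4,1}}\|F\|_{L^\infty_tL^{15/7,\infty}_x}$ by a two-parameter Bourgain trick, with $g$ and $F$ both restricted to characteristic functions.
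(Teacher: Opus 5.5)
\textbf{Gap in Step 2.} Your Step 1 bounds and the abstract summation lemma are correct. Step 2 fails at the embedding $(L^1_tX_0,L^1_tX_1)_{\theta,\infty}\hookrightarrow L^1_t(X_0,X_1)_{\theta,\infty}$, which goes the wrong way. For the couple $(L^1_tX_0,L^1_tX_1)$ one has $K(s,F)\approx\int K(s,F(t);X_0,X_1)\,dt$. So the $(\theta,\infty)$-norm of that couple is $\sup_s s^{-\theta}\int K(s,F(t))\,dt$, while the norm of $L^1_t(X_0,X_1)_{\theta,\infty}$ is $\int\sup_s s^{-\theta}K(s,F(t))\,dt$. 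Minkowski only gives $\sup_s\int\le\int\sup_s$, i.e.\ $L^1_t(X_0,X_1)_{\theta,\infty}\hookrightarrow(L^1_tX_0,L^1_tX_1)_{\theta,\infty}$.

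The reverse inclusion genuinely fails. Take $F=\sum_{k=1}^M 1_{[k,k+1)}(t)f_k$ with $f_k=|E_k|^{-1/b}1_{E_k}$ and $|E_k|$ growing geometrically fast. Each $s^{-\theta}K(s,f_k)$ is $\approx1$ at its peak, but the peaks sit at well-separated scales $s$. Hence the left-hand norm is $O(1)$ while the right-hand one is $\approx M$. With $b_0\neq b_1$, Bourgain's trick therefore only puts $e^{it\Delta_{\R^3}}g$ in the strictly larger space $(L^1_tL^{b_0}_x,L^1_tL^{b_1}_x)_{\theta,\infty}$, not in $L^1_tL^{b,\infty}_x$, and Step 3 has no valid input. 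The identification you single out as the main obstacle is actually fine: when the second index equals the outer exponent $1$, Fubini gives $(L^1_tX_0,L^1_tX_1)_{\eta,1}=L^1_t(X_0,X_1)_{\eta,1}$ exactly.

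\textbf{How to repair it.} The fix stays inside your framework: in Step 2, choose the segment \emph{horizontal} in the $(1/a,1/b)$-plane, so that only the input exponent varies.
\begin{itemize}
\item The zero line $y=x-\tfrac23$ has slope $1$, so a horizontal segment is transversal to it, and $\beta$ changes sign along it.
\item Near $(\tfrac45,\tfrac2{15})$ such a segment stays inside your triangle; at height $y=\tfrac2{15}$ the triangle spans $\tfrac35\le x\le\tfrac{13}{15}$.
\item Then $B_0=B_1=L^1_tL^b_x$, so $(B_0,B_1)_{\theta,\infty}=L^1_tL^b_x$ with no identification needed. Step 2 now gives the \emph{strong} bound $L^{a,1}\to L^1_tL^b_x$ at every interior point of the zero line.
\item Step 3 then proceeds exactly as you wrote, via $(L^{a_0,1},L^{a_1,1})_{\eta,1}=L^{5/4,1}$ and $(L^1_tL^{b_0}_x,L^1_tL^{b_1}_x)_{\eta,1}=L^1_tL^{15/2,1}_x$.
\end{itemize}

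This fixed-output form of the summation trick is what the paper uses, in its simplest two-piece version. It keeps the target $L^{15/2,1}_x$ throughout. It bounds $\int_0^T$ by interpolating $L^6$ (endpoint Strichartz) with $L^\infty$ (dispersion), and $\int_T^\infty$ by interpolating $L^2$ with $L^\infty$. Optimizing $T$ gives $\|g\|_{L^1}^{3/5}\|g\|_{L^2}^{2/5}$, and the $J$-method together with $(L^1,L^2)_{2/5,1}=L^{5/4,1}$ finishes. No interpolation on the output side is ever needed. Separately, in your fallback the dual of $L^1_tL^{15/2,1}_x$ is $L^\infty_tL^{15/13,\infty}_x$, not $L^\infty_tL^{15/7,\infty}_x$.
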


\begin{proof}
Recall first the real interpolation $(L^r,L^\infty)_{1-r/p,1}=L^{p,1}$:
\begin{equation}
\label{eq:Lorentz-interpolation-free}
\|F\|_{L^{p,1}(\R^3)}
\lesssim_{p,r}
\|F\|_{L^r(\R^3)}^{r/p}
\|F\|_{L^\infty(\R^3)}^{1-r/p},
\qquad 1\leq r<p<\infty.
\end{equation}
Fix $T>0$. For $0<t<T$, applying
\eqref{eq:Lorentz-interpolation-free}
with $p=15/2$ and $r=6$, followed by
H\"older in time and the Strichartz estimate
\(\|e^{it\Delta_{\R^3}}g\|_{L_t^2 L_x^6(\R\times\R^3)}\lesssim \|g\|_{L_x^2},\)
yields
\begin{align}\label{3.39}
\begin{aligned}
&\int_0^T
\|e^{it\Delta_{\R^3}}g\|_{L_x^{15/2,1}(\R^3)}
\,dt\\
&\quad\lesssim
\|g\|_{L^1}^{1/5}
\int_0^T
t^{-3/10}
\|e^{it\Delta_{\R^3}}g\|_{L_x^6}^{4/5}\,dt\\
&\quad\lesssim
\|g\|_{L^1}^{1/5}
\left(
\int_0^T
\|e^{it\Delta_{\R^3}}g\|_{L_x^6}^2\,dt
\right)^{2/5}
\left(
\int_0^T t^{-1/2}\,dt
\right)^{3/5}\\
&\quad\lesssim
T^{3/10}
\|g\|_{L^1}^{1/5}
\|g\|_{L^2}^{4/5}.
\end{aligned}
\end{align}
For $t>T$, applying
\eqref{eq:Lorentz-interpolation-free}
with $p=15/2$ and $r=2$, and using the dispersive estimate of $e^{it\Delta_{\R^3}}$, we obtain
\begin{align}
\int_T^\infty
\|e^{it\Delta_{\R^3}}g\|_{L_x^{15/2,1}(\R^3)}
\,dt
\lesssim
\|g\|_{L^1}^{11/15}
\|g\|_{L^2}^{4/15}
\int_T^\infty t^{-11/10}\,dt
\lesssim
T^{-1/10}
\|g\|_{L^1}^{11/15}
\|g\|_{L^2}^{4/15}.
\label{3.40}
\end{align}
Balancing \eqref{3.39} and \eqref{3.40} gives
\begin{align}\label{3.41}
\int_0^\infty
\|e^{it\Delta_{\R^3}}g\|_{L_x^{15/2,1}}
\,dt
\lesssim
\|g\|_{L^1}^{3/5}
\|g\|_{L^2}^{2/5}.
\end{align}
The same estimate for negative times follows from
\(
e^{-it\Delta_{\R^3}}g
=
\overline{e^{it\Delta_{\R^3}}\overline g}.
\)
Since the map $g\mapsto e^{it\Delta_{\R^3}}g$ is linear, the $J$-method of real interpolation
(see e.g. \cite{BerghLofstrom}) allows us to extend \eqref{3.41} to $L_x^{5/4,1}$ due to the real interpolation
\(
(L^1,L^2)_{2/5,1}=L^{5/4,1}.
\)
This proves \eqref{eq:global-free-Lorentz}.
\end{proof}

We next establish a uniform multiplier estimate in $L^{15/2,1}(\R^3)$.

\begin{lemma}
\label{lem:H-Lorentz-multiplier}
For $j\geq0$ and $h\neq0$, define
\[
m_{j,h}(\xi)
=
\sum_N \phi_N(\xi)k_j(Nh).
\]
Then, for any $A>0$, we have, uniformly in $j$ and $h$, that
\begin{equation}
\label{eq:H-multiplier-derivatives}
|\partial_\xi^\gamma m_{j,h}(\xi)|
\lesssim_{A,\gamma}
2^{-Aj}|\xi|^{-|\gamma|}
\qquad \xi\neq0,
\end{equation}
for every multi-index $\gamma$. Consequently,
\begin{equation}
\label{eq:H-Lorentz-multiplier}
\begin{aligned}
\|m_{j,h}(-i\nabla)g\|_{L^{15/2,1}(\R^3)}
+\|m_{j,h}(-i\nabla)^*g\|_{L^{15/2,1}(\R^3)}
\lesssim
2^{-Aj}\|g\|_{L^{15/2,1}(\R^3)}.
\end{aligned}
\end{equation}
\end{lemma}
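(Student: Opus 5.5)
The plan is to prove the symbol bound \eqref{eq:H-multiplier-derivatives} directly from Lemma~\ref{lem:H-time-kernel}, and then deduce \eqref{eq:H-Lorentz-multiplier} from the Mikhlin--H\"ormander multiplier theorem together with real interpolation.

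\emph{Symbol bound.} For fixed $\xi\neq0$, only the $O(1)$ dyadic numbers $N\sim|\xi|$ with $\phi_N(\xi)\neq0$ contribute to the sum. Hence
\[
\partial_\xi^\gamma m_{j,h}(\xi)=\sum_{N\sim|\xi|}\partial_\xi^\gamma\phi_N(\xi)\,k_j(Nh),
\]
since $k_j(Nh)$ does not depend on $\xi$. The standard Littlewood--Paley bounds give $|\partial_\xi^\gamma\phi_N(\xi)|\lesssim_\gamma N^{-|\gamma|}\sim|\xi|^{-|\gamma|}$. The first estimate in \eqref{eq:H-kernel-L1} gives $|k_j(Nh)|\le\|k_j\|_{L^\infty}\lesssim_A2^{-Aj}$, uniformly in $N$ and $h$. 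Multiplying these and summing the boundedly many terms yields \eqref{eq:H-multiplier-derivatives}, uniformly in $j$ and $h$. The only point to watch is that the low-frequency piece $\phi_1$ (if it is a bump near the origin) still satisfies $|\partial^\gamma\phi_1(\xi)|\lesssim|\xi|^{-|\gamma|}$. This holds trivially, since $\phi_1$ is smooth and compactly supported.

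\emph{Lorentz bound.} By \eqref{eq:H-multiplier-derivatives}, the rescaled symbol $2^{Aj}m_{j,h}$ satisfies the Mikhlin condition in $\R^3$ (up to order $|\gamma|\le 2$ suffices) with constants independent of $j$ and $h$. The Mikhlin--H\"ormander theorem therefore gives $\|m_{j,h}(-i\nabla)\|_{L^p\to L^p}\lesssim_p2^{-Aj}$ for every $1<p<\infty$. I would use, say, $p=6$ and $p=9$. Since $15/2$ lies strictly between them, the real interpolation identity $(L^{6},L^{9})_{\theta,1}=L^{15/2,1}$, for the appropriate $\theta\in(0,1)$, transfers the bound to $L^{15/2,1}(\R^3)$ with the same constant $2^{-Aj}$ up to an absolute factor.

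For the adjoint, $m_{j,h}(-i\nabla)^*$ is the Fourier multiplier with symbol $\overline{m_{j,h}}$. This symbol obeys the same derivative bounds, so the identical argument applies; alternatively one can argue by duality through $L^{15/7,\infty}$.

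\emph{Main obstacle.} I expect no real difficulty. The only step needing care is the uniformity in $h$: it comes entirely from the sup bound on $k_j$, and in particular no smoothness of $k_j$ at $0$ is needed. This matters because $k_0$ may be discontinuous at the origin, but it enters only through its values $k_j(Nh)$ for fixed $h\neq0$.
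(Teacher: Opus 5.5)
Your proposal is correct and follows the paper's proof essentially verbatim. The symbol bound comes from $\|k_j\|_{L^\infty}\lesssim_A 2^{-Aj}$, the bounded overlap of the $\phi_N$, and $|\partial_\xi^\gamma\phi_N|\lesssim_\gamma N^{-|\gamma|}$. The Lorentz bound comes from Mikhlin at two Lebesgue exponents plus real interpolation: the paper uses $(L^7,L^8)_{8/15,1}=L^{15/2,1}$, and your pair $L^6,L^9$ with $\theta=3/5$ works equally well. The adjoint is handled through the conjugate symbol $\overline{m_{j,h}}$. One small slip in your aside: the dual of $L^{15/2,1}$ is $L^{15/13,\infty}$, not $L^{15/7,\infty}$, but your main argument does not rely on it.
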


\begin{proof}
From \eqref{eq:H-kernel-L1} we have
\(
\|k_j\|_{L^\infty(\R)}
\lesssim_A 2^{-Aj}.
\)
Notice also that for each fixed $\xi\neq0$, only a bounded number of dyadic multipliers $\phi_N(\xi)$ can be nonzero. Combining the uniform bound
\[
|\partial_\xi^\gamma\phi_N(\xi)|
\lesssim_\gamma N^{-|\gamma|}\sim |\xi|^{-|\gamma|},
\] 
we obtain
\[
\begin{aligned}
|\partial_\xi^\gamma m_{j,h}(\xi)|
\leq
\sum_N
|k_j(Nh)|
|\partial_\xi^\gamma\phi_N(\xi)|
\lesssim_\gamma
2^{-Aj}|\xi|^{-|\gamma|},
\end{aligned}
\]
which proves \eqref{eq:H-multiplier-derivatives}. Now by the Mikhlin multiplier theorem,
$m_{j,h}(-i\nabla)$ is bounded, say, on $L^7(\R^3)$ and $L^8(\R^3)$, with operator norms bounded by $C2^{-Aj}$.
The real interpolation
\(
(L^7,L^8)_{8/15,1}=L^{15/2,1}
\)
then gives
\[
\|m_{j,h}(-i\nabla)g\|_{L^{15/2,1}}
\lesssim
2^{-Aj}\|g\|_{L^{15/2,1}}.
\]
Since the adjoint multiplier has symbol
$\overline{m_{j,h}(\xi)}$, which satisfies
the same derivative bounds, the corresponding
estimate for $m_{j,h}(-i\nabla)^*$ follows
identically.
\end{proof}

\begin{lemma}
\label{lem:H-Euclidean-weak}
For every $j\geq0$, we have
\begin{equation}
\label{eq:H-Euclidean-weak}
\|\mathcal H_j^{\R^3}f\|
_{L_t^\infty L_x^{5,\infty}(\R\times\R^3)}
\lesssim
2^{-Aj}
\|f\|_{L_t^\infty L_x^{15/13,\infty}(\R\times\R^3)}.
\end{equation}
\end{lemma}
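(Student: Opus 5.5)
The estimate will follow by duality from Lemma~\ref{lem:global-free-Lorentz} and Lemma~\ref{lem:H-Lorentz-multiplier}. Recall $(L^{5,\infty})^*\supseteq L^{5/4,1}$ and $L^{5,\infty}=(L^{5/4,1})^*$ (up to equivalent norms). Likewise the pairing of $L^{15/13,\infty}$ with $L^{15/2,1}$ is bounded, since $13/15+2/15=1$. So fix $t\in\R$ and a test function $g\in L^{5/4,1}(\R^3)$ with $\|g\|_{L^{5/4,1}}\le1$; it then suffices to bound $|\langle \mathcal H_j^{\R^3}f(t),g\rangle|$ by $2^{-Aj}\|f\|_{L_t^\infty L_x^{15/13,\infty}}$.

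First I rewrite the operator as a single time integral. Since $k_j(Nh)$ is a scalar for fixed $h$, the sum over $N$ can be moved inside:
\[
\mathcal H_j^{\R^3}f(t)=\int_{\R} e^{ih\Delta_{\R^3}}\, m_{j,h}(-i\nabla) f(t-h)\,dh,
\]
with $m_{j,h}$ as in Lemma~\ref{lem:H-Lorentz-multiplier}. Here the identity $\sum_N\phi_N(\xi)k_j(Nh)P$-structure is exactly the multiplier $m_{j,h}$. Pairing with $g$ and moving the operators to the other side gives
\[
|\langle \mathcal H_j^{\R^3}f(t),g\rangle|\le \int_{\R}\|f(t-h)\|_{L^{15/13,\infty}}\,\bigl\|m_{j,h}(-i\nabla)^*e^{-ih\Delta_{\R^3}}g\bigr\|_{L^{15/2,1}}\,dh .
\]
Lemma~\ref{lem:H-Lorentz-multiplier} applied uniformly in $h$ bounds the second factor by $2^{-Aj}\|e^{-ih\Delta_{\R^3}}g\|_{L^{15/2,1}}$. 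Taking the sup in time of the first factor leaves
\[
2^{-Aj}\|f\|_{L_t^\infty L_x^{15/13,\infty}}\int_\R\|e^{-ih\Delta}g\|_{L^{15/2,1}}\,dh .
\]
By Lemma~\ref{lem:global-free-Lorentz}, using conjugation for the time reversal, this integral is $\lesssim\|g\|_{L^{5/4,1}}\le1$. Taking the supremum over $g$ and $t$ concludes the proof.

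The main points needing care are as follows.
\begin{itemize}
\item Justifying the interchange of sum, integral and pairing. I would first take $f$ a finite sum of dyadic pieces, or Schwartz in $x$, and then use density or monotone arguments. Note that the $L_t^\infty$ norm does not need density in time, since the estimate is pointwise in $t$.
\item The duality statement $L^{5,\infty}\cong (L^{5/4,1})^*$ with comparable norms, which holds because $1<5<\infty$.
\item Checking that the singularity of $k$ at $h=0$ causes no problem. This is fine because $m_{j,h}$ is bounded uniformly in $h$: $k_j$ is bounded.
\end{itemize}
Notably, no $h$-integrability of $k_j$ is needed here. All time integrability comes from the global $L^1_t$ estimate of Lemma~\ref{lem:global-free-Lorentz}, which is why that lemma was stated with the endpoint Lorentz spaces.
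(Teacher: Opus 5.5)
Your proposal is correct and follows essentially the same route as the paper: you rewrite $\mathcal H_j^{\R^3}$ through the multiplier $m_{j,h}$, dualize against a test function $g$, apply the Lorentz H\"older pairing $L^{15/13,\infty}\times L^{15/2,1}$, use Lemma~\ref{lem:H-Lorentz-multiplier} uniformly in $h$, and then integrate in $h$ with Lemma~\ref{lem:global-free-Lorentz}. Your remarks on justification (density, the duality $(L^{5/4,1})^*=L^{5,\infty}$, and that the bound for $m_{j,h}$ needs only $\|k_j\|_{L^\infty}$) are sound and somewhat more careful than the paper's write-up.
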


\begin{proof}
By the definition of $\mathcal H_j^{\R^3}$ and the multiplier $m_{j,h}$ introduced above, we have
\[
\mathcal H_j^{\R^3}f(t)
=
\int_{\R}
e^{ih\Delta_{\R^3}}
m_{j,h}(-i\nabla)f(t-h)\,dh.
\]
Fix $t\in\R$ and a trial function $g\in\mathcal S(\R^3)$. Duality then gives
\begin{align*}
|\langle\mathcal H_j^{\R^3}f(t),g\rangle|
\leq
\int_{\R}
\left|
\left\langle
f(t-h),
m_{j,h}(-i\nabla)^*
e^{-ih\Delta_{\R^3}}g
\right\rangle
\right|\,dh.
\end{align*}
Combining the H\"older inequality 
$\|fg\|_{L^1}\lesssim\|f\|_{L^{p,1}}\|g\|_{p',\infty}$ 
for $1<p\leq \infty$ and \eqref{eq:H-Lorentz-multiplier}, it follows
\begin{align*}
|\langle\mathcal H_j^{\R^3}f(t),g\rangle|
&\lesssim
\int_{\R}
\|f(t-h)\|_{L^{15/13,\infty}}
\|m_{j,h}(-i\nabla)^*
e^{-ih\Delta_{\R^3}}g\|_{L^{15/2,1}}
\,dh\\
&\lesssim
2^{-Aj}
\|f\|_{L_t^\infty L_x^{15/13,\infty}}
\int_{\R}
\|e^{-ih\Delta_{\R^3}}g\|_{L^{15/2,1}}
\,dh.
\end{align*}
Applying Lemma~\ref{lem:global-free-Lorentz}
to the last integral yields
\[
|\langle\mathcal H_j^{\R^3}f(t),g\rangle|
\lesssim
2^{-Aj}
\|f\|_{L_t^\infty L_x^{15/13,\infty}}
\|g\|_{L^{5/4,1}}
\]
and the desired claim follows now by duality.
\end{proof}

We are now ready to give the proof of Lemma~\ref{lem 3.4}.

\begin{proof}[Proof of Lemma~\ref{lem 3.4}]
This follows the same line as in the proof of Lemma \ref{lem 3.3}, where we shall simply replace \eqref{eq:H-Euclidean-estimate}, \eqref{eq:extension-strong} and \eqref{eq:remainder-strong} applied therein, by \eqref{eq:H-Euclidean-weak}, \eqref{eq:extension-weak} and \eqref{eq:remainder-weak}. We thus omit the repeating details.
\end{proof}

\subsection{Fractional product rule}
In this subsection we give the proof of Lemma \ref{lem 3.5}. We begin with stating several useful Lorentz-Sobolev embedding. 

\begin{lemma}
\label{lem:Sobolev-Lorentz}
For $f:\T^3\to \C$ it holds 
\begin{align}
\|f\|_{L^{3,2}_x}
+
\|(1-\Delta)^{1/5}f\|_{L^{15/7,2}_x}
&\lesssim \|f\|_{H^{1/2}_x},
\label{eq:Sobolev-Lorentz-particular}\\
\|(1-\Delta)^{-1/5}f\|_{L^{\infty}_x}
&\lesssim \|f\|_{L_x^{15/2,1}}.\label{eq:Sobolev-Lorentz-particular2}
\end{align}
\end{lemma}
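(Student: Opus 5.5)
The plan is to prove both estimates by transferring the corresponding Euclidean Bessel-potential/Sobolev embeddings in Lorentz spaces to $\T^3$. An alternative route is to build them directly from the periodic Hardy--Littlewood--Sobolev inequality plus real interpolation.

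\emph{Kernel estimates.} On $\T^3$, the operator $(1-\Delta)^{-s/2}$ with $0<s<3$ is convolution with the periodic Bessel kernel
\[
G_s(x)=\sum_{n\in\Z^3}(1+|n|^2)^{-s/2}e^{in\cdot x}.
\]
By Poisson summation, $G_s$ is the periodization of the Euclidean Bessel kernel, which decays exponentially. Hence on $\Omega=[-\pi,\pi]^3$ we have $|G_s(x)|\lesssim |x|^{s-3}$, and so $G_s\in L^{3/(3-s),\infty}(\T^3)$.

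\emph{Proof of \eqref{eq:Sobolev-Lorentz-particular2}.} Take $s=2/5$, so that $G_{2/5}\in L^{15/13,\infty}$. The Lorentz H\"older inequality $\|FG\|_{L^1}\lesssim\|F\|_{L^{p,1}}\|G\|_{L^{p',\infty}}$, already used in the proof of Lemma \ref{lem 3.10}, applies with $p=15/2$ and $p'=15/13$. It gives
\[
|(1-\Delta)^{-1/5}f(x)|\le \|G_{2/5}(x-\cdot)\|_{L^{15/13,\infty}}\|f\|_{L^{15/2,1}},
\]
uniformly in $x$. This is exactly \eqref{eq:Sobolev-Lorentz-particular2}.

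\emph{The $L^{3,2}$ bound in \eqref{eq:Sobolev-Lorentz-particular}.} Write $f=(1-\Delta)^{-1/4}g$ with $\|g\|_{L^2}=\|f\|_{H^{1/2}}$. The kernel $G_{1/2}$ lies in $L^{6/5,\infty}(\T^3)$. O'Neil's convolution inequality on the finite measure space $\T^3$ states
\[
\|G*g\|_{L^{r,q}}\lesssim\|G\|_{L^{p_1,\infty}}\|g\|_{L^{p_2,q}},\qquad \tfrac1r+1=\tfrac1{p_1}+\tfrac1{p_2}.
\]
With $p_1=6/5$ and $p_2=q=2$ this gives $r=3$, hence $\|f\|_{L^{3,2}}\lesssim\|g\|_{L^{2,2}}=\|g\|_{L^2}$.

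If one prefers to avoid O'Neil's inequality, there is a second route. First obtain the strong-type bounds $L^{2}\to L^{p}$ for $p$ slightly above and below $3$, using the periodic HLS inequality for $G_{1/2\pm\delta}$ together with boundedness of the multipliers $(1-\Delta)^{\pm\delta/2}$; finite measure helps here. Then real interpolation with the second index equal to $2$ yields $L^2=(L^2,L^2)_{\theta,2}\to L^{3,2}$. The same bootstrap can also be run via Marcinkiewicz interpolation.

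\emph{The second term in \eqref{eq:Sobolev-Lorentz-particular}.} Here $(1-\Delta)^{1/5}f=(1-\Delta)^{-1/20}g$. Since $G_{1/10}\in L^{30/29,\infty}$, O'Neil's inequality with $p_1=30/29$ and $p_2=q=2$ gives exponent $1/r=29/30+1/2-1=7/15$, i.e.\ $r=15/7$. This is the claimed $L^{15/7,2}$ bound.

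\emph{Main obstacle.} The main technical point is justifying O'Neil's inequality, or equivalently the Lorentz-space fractional integration, on $\T^3$ rather than $\R^3$, and in particular the mean-zero issue at frequency $n=0$. I would handle this by extension as in Lemma \ref{lem 3.10}. Extend $g$ periodically and cut off with a fixed $\rho_0$. On $\Omega$, split $G_s$ into its local singular part $|x|^{s-3}1_{|x|\le 2}$, for which Euclidean O'Neil applies, and a smooth bounded remainder, which maps $L^1\to L^\infty$ trivially and is therefore harmless on the finite measure space.
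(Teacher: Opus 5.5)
Your argument is correct, but it takes a different route from the paper's.

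\textbf{Comparison with the paper.} The paper proves \eqref{eq:Sobolev-Lorentz-particular} in one line by real interpolation of standard Sobolev embeddings. One natural reading is $H^{1/2\mp\delta}\hookrightarrow L^{q_\mp}$, combined with $(H^{1/2-\delta},H^{1/2+\delta})_{1/2,2}=H^{1/2}$ and $(L^{q_-},L^{q_+})_{1/2,2}=L^{3,2}$, and similarly for $H^{1/10}\hookrightarrow L^{15/7,2}$. It then obtains \eqref{eq:Sobolev-Lorentz-particular2} by citing the Euclidean endpoint embedding $H^{n/p}_{p,1}(\R^n)\hookrightarrow L^\infty(\R^n)$.

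You instead work directly with the periodic Bessel kernel. Poisson summation gives $|G_s(x)|\lesssim |x|^{s-3}$ on $[-\pi,\pi]^3$, hence $G_s\in L^{3/(3-s),\infty}(\T^3)$. Then \eqref{eq:Sobolev-Lorentz-particular2} is just the Lorentz H\"older pairing of $L^{15/2,1}$ against $L^{15/13,\infty}$. Both terms in \eqref{eq:Sobolev-Lorentz-particular} follow from O'Neil's inequality, and your exponent bookkeeping there is right: $r=3$ and $r=15/7$, with the second Lorentz index $2$ preserved.

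The obstacle you flag is milder than you think. O'Neil's rearrangement inequality only uses the bounds $L^1\times L^1\to L^1$ and $L^1\times L^\infty\to L^\infty$ for the convolution, so it applies on $\T^3$ directly. Your extension-and-splitting fix is therefore valid but unnecessary. There is also no zero-frequency issue, since the Bessel symbol $(1+|n|^2)^{-s/2}$ equals $1$ at $n=0$.

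Your route gives a self-contained treatment on $\T^3$. The paper's citation for \eqref{eq:Sobolev-Lorentz-particular2} is Euclidean, and the transfer to the torus is left implicit. Your route also makes clear why the second index survives. The paper's route is shorter because it reduces everything to Lebesgue-space embeddings and abstract interpolation.

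\textbf{The fallback does not work as written.} The ``second route'' you sketch for avoiding O'Neil fails. The operator $(1-\Delta)^{-1/4}$ is not bounded from $L^2$ to $L^p$ for any $p>3$, since that would mean $H^{1/2}(\T^3)\hookrightarrow L^p$. Concentrating bumps $\lambda\phi(\lambda x)$ violate this. Also, $(1-\Delta)^{\delta/2}$ is not bounded on $L^p$. So you cannot interpolate with the domain frozen at $L^2=(L^2,L^2)_{\theta,2}$.

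The interpolation has to move the domain, in one of two ways:
\begin{itemize}
\item in smoothness, as in the paper's reading above;
\item in integrability, using $(1-\Delta)^{-1/4}:L^{p_i}\to L^{q_i}$ with $1/q_i=1/p_i-1/6$, $p_0<2<p_1$, and $(L^{p_0},L^{p_1})_{\theta,2}=L^2$.
\end{itemize}
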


\begin{proof}
\eqref{eq:Sobolev-Lorentz-particular} follows directly from real interpolating the standard Sobolev inequalities. 
\eqref{eq:Sobolev-Lorentz-particular2} follows from the embedding \(
H_{p_1,p_2}^{n/p_1}(\mathbb R^n)
\hookrightarrow L_{q_1,q_2}(\mathbb R^n) 
\) by setting $n=3$, $p_1=15/2$, $p_2=1$, $q_1=q_2=\infty$, see e.g. \cite{MachiharaOzawaWadade}.
\end{proof}

We also record some useful fractional Leibniz rules given in term of Lorentz spaces.

\begin{lemma}
\label{lem:bilinear-Lorentz}
For $f,g:\T^3\to\C$ we have
\begin{equation}
\begin{aligned}
\|(1-\Delta)^{1/5}(fg)\|_{L_x^{5/4,1}}
\lesssim
\|f\|_{H^{1/2}}\|g\|_{H^{1/2}}.
\end{aligned}
\label{eq:bilinear-Lorentz}
\end{equation}
\end{lemma}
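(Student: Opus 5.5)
We will prove \eqref{eq:bilinear-Lorentz} by duality against $L^{5,\infty}$, reducing matters to Sobolev–Lorentz embeddings and a Kato–Ponce type commutator. The scaling is consistent: $(1-\Delta)^{1/5}$ carries $2/5$ derivative, and $\tfrac45=\tfrac13+\tfrac7{15}$, which is exactly the Hölder pairing of $L^{3,2}$ with $L^{15/7,2}$ from \eqref{eq:Sobolev-Lorentz-particular}.

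The plan is to establish the Lorentz fractional Leibniz rule
\[
\|(1-\Delta)^{1/5}(fg)\|_{L^{5/4,1}}
\lesssim
\|(1-\Delta)^{1/5}f\|_{L^{15/7,2}}\|g\|_{L^{3,2}}
+\|f\|_{L^{3,2}}\|(1-\Delta)^{1/5}g\|_{L^{15/7,2}}
\]
on $\T^3$, and then conclude directly from \eqref{eq:Sobolev-Lorentz-particular}. The last step uses Hölder's inequality in Lorentz spaces with exponents $\tfrac1{5/4}=\tfrac1{15/7}+\tfrac13$ and second indices $\tfrac11=\tfrac12+\tfrac12$, which is admissible.

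To obtain the Leibniz rule, we decompose $fg$ by Littlewood–Paley paraproducts:
\[
fg=\sum_{N}P_{\le N/8}f\,P_Ng+\sum_N P_Nf\,P_{\le N/8}g+\sum_{N\sim M}P_Nf\,P_Mg.
\]
\begin{itemize}
\item For the two paraproduct terms, $(1-\Delta)^{1/5}$ acting on an output at frequency $\sim N$ can be transferred to the high-frequency factor. Square-function estimates, valid in Lorentz spaces $L^{p,q}$ with $1<p<\infty$ by real interpolation of the $L^p$ Littlewood–Paley theory on $\T^3$, together with the maximal function bound $|P_{\le N/8}f|\lesssim Mf$, reduce these terms to the Hölder bound above.
\item The high–high term uses $\langle N\rangle^{2/5}$ on the larger factor, then the same square-function argument, since the output frequency is $\lesssim N$.
\end{itemize}

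An alternative, which avoids re-proving the Lorentz Leibniz rule, is to interpolate. Bilinear real interpolation is awkward, so the paraproduct route is the one I would write down. Its variant is the known $L^p$ Kato–Ponce inequality
\[
\|(1-\Delta)^{s}(fg)\|_{L^{r}}\lesssim \|(1-\Delta)^{s}f\|_{L^{p_1}}\|g\|_{L^{q_1}}+\|f\|_{L^{p_2}}\|(1-\Delta)^{s}g\|_{L^{q_2}}
\]
on $\T^3$. It can be obtained by transference from $\R^3$, or equivalently via the periodization argument of Lemma \ref{lem 3.10}. One then fixes $g$, uses linearity in $f$, and interpolates the exponent pairs $(p_1,r)$ around $(15/7,5/4)$ along lines with $\tfrac1r-\tfrac1{p_1}=\tfrac13$. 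Real interpolation with second index $1$ on the target and $2$ on $f$ is permissible only after using the Lorentz Hölder gain, so this route requires checking exponent compatibility.

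The main obstacle is getting the Lorentz second index $1$ on the left, which is stronger than the index $2$ carried by each factor. This gain comes only from the bilinear Hölder inequality $L^{15/7,2}\cdot L^{3,2}\subset L^{5/4,1}$, so it must be used after the Littlewood–Paley bounds rather than before. The square-function and maximal-function estimates must therefore be carried out in $L^{15/7,2}$ and $L^{3,2}$ individually. Both are legitimate since these spaces are interpolation spaces between $L^p$ spaces with $1<p<\infty$, on which those operators are bounded.
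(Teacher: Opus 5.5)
Your proposal is correct and follows the paper's proof: the paper reduces \eqref{eq:bilinear-Lorentz} to the same Lorentz Kato--Ponce inequality ($L^{15/7,2}\times L^{3,2}\to L^{5/4,1}$) and then applies \eqref{eq:Sobolev-Lorentz-particular}. The only difference is that the paper cites \cite[Theorem 6.1]{CruzUribeNaibo} for that Leibniz rule, whereas you sketch a paraproduct proof of it directly on $\T^3$, correctly arranging for the second Lorentz index to improve to $1$ only at the final O'Neil--H\"older step (your opening mention of duality against $L^{5,\infty}$ and a commutator is never used and can be dropped).
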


\begin{proof}
We first recall the following fractional Lorentz--Leibniz rule
\begin{equation*}
\begin{aligned}
\|(1-\Delta)^{1/5}(fg)\|_{L_x^{5/4,1}}
\lesssim
\|(1-\Delta)^{1/5}f\|_{L_x^{15/7,2}}
\|g\|_{L_x^{3,2}}+
\|f\|_{L_x^{3,2}}
\|(1-\Delta)^{1/5}g\|_{L_x^{15/7,2}}.
\end{aligned}
\end{equation*}
from \cite[Theorem 6.1]{CruzUribeNaibo}. Now \eqref{eq:bilinear-Lorentz} follows by combining \eqref{eq:Sobolev-Lorentz-particular}.
\end{proof}

\begin{lemma}
\label{lem:Lorentz-Leibniz}
For $f,g:\T^3\to\C$ we have
\begin{equation*}
\begin{aligned}
\|(1-\Delta)^{1/5}(fg)\|_{L_x^{5/4,1}}
\lesssim
\|(1-\Delta)^{1/5}f\|_{L_x^{5/4,1}}
\left(
\|g\|_{L_x^\infty}
+
\|(1-\Delta)^{1/5}g\|_{L_x^{15/2}}
\right).
\end{aligned}
\end{equation*}
\end{lemma}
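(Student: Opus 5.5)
The plan is to deduce the estimate from the fractional Lorentz--Leibniz rule of \cite[Theorem 6.1]{CruzUribeNaibo}, already used in the proof of Lemma~\ref{lem:bilinear-Lorentz}, but with a different distribution of the exponents. In that rule one may place the derivative on either factor, with H\"older-type exponents in Lorentz spaces:
\[
\|(1-\Delta)^{1/5}(fg)\|_{L_x^{5/4,1}}
\lesssim
\|(1-\Delta)^{1/5}f\|_{L_x^{p_1,q_1}}\|g\|_{L_x^{p_2,q_2}}
+\|f\|_{L_x^{p_3,q_3}}\|(1-\Delta)^{1/5}g\|_{L_x^{p_4,q_4}},
\]
where $\frac1{p_1}+\frac1{p_2}=\frac1{p_3}+\frac1{p_4}=\frac45$ and the secondary indices satisfy $\frac1{q_1}+\frac1{q_2}\ge 1$, and similarly for $q_3,q_4$. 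I will choose the two terms separately.

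For the first term, I take $(p_1,q_1)=(5/4,1)$ and $(p_2,q_2)=(\infty,\infty)$, which gives $\|(1-\Delta)^{1/5}f\|_{L^{5/4,1}}\|g\|_{L^\infty}$. This endpoint, with $L^\infty$ on one factor, is admissible in Kato--Ponce/Leibniz type rules, and it is the form stated in the lemma.

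For the second term, I take $(p_4,q_4)=(15/2,15/2)$, so the $g$-factor is exactly $\|(1-\Delta)^{1/5}g\|_{L^{15/2}}$. This forces $\frac1{p_3}=\frac45-\frac2{15}=\frac23$, so $p_3=3/2$. The Lorentz index on $f$ must then satisfy $\frac1{q_3}\ge 1-\frac2{15}$, and $q_3=1$ is a safe choice. It remains to bound $\|f\|_{L^{3/2,1}}$ by $\|(1-\Delta)^{1/5}f\|_{L^{5/4,1}}$. This is the Lorentz--Sobolev embedding $(1-\Delta)^{-1/5}\colon L^{5/4,1}\to L^{3/2,1}$ on $\T^3$, since $\frac45-\frac{2/5}{3}=\frac{10}{15}=\frac23$. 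I will obtain it, as in Lemma~\ref{lem:Sobolev-Lorentz}, by real interpolation of the standard Sobolev (Bessel potential) inequalities $L^{p}\to L^{p^*}$ for two exponents near $5/4$, using reiteration with second index $1$. On the torus the Bessel potential kernel is locally $|x|^{-3+2/5}$ and integrable, so Hardy--Littlewood--Sobolev in Lorentz form, or O'Neil's convolution inequality, applies directly.

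The main obstacle is justifying the endpoint version of the Leibniz rule on $\T^3$ in Lorentz spaces, with an $L^\infty$ factor and a target space $L^{5/4,1}$ of second index $1$. If \cite{CruzUribeNaibo} does not directly cover the periodic setting or these endpoints, I will fall back on a paraproduct argument. I split $fg=\pi(f,g)+\pi(g,f)+R(f,g)$ using Littlewood--Paley projections on $\T^3$.

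1. The low--high piece, with $g$ at low frequency, is bounded by $\|(1-\Delta)^{1/5}f\|_{L^{5/4,1}}\|g\|_{L^\infty}$, via Coifman--Meyer bilinear multiplier bounds interpolated into Lorentz spaces.
2. The piece with $g$ at high frequency, together with the resonant part $R$, is handled by placing the derivative on $g$ and using the embedding above for $f$.
3. Bilinear real interpolation, for instance in the first argument with $g$ fixed, upgrades the strong-type estimates to the $L^{\cdot,1}$ scale.

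Either route yields the stated inequality.
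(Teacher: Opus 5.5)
Your proposal is correct in outline, but it takes a different route from the paper, and it is your fallback, not your primary route, that actually carries the proof.

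\textbf{The primary route.} It needs a Lorentz-space Kato--Ponce inequality with an $L^\infty$ factor and target $L^{5/4,1}$. Lorentz-space Leibniz rules obtained through bilinear extrapolation, as in \cite{CruzUribeNaibo}, typically require finite exponents. So the endpoint choice $(p_2,q_2)=(\infty,\infty)$ cannot simply be read off from that citation, as you yourself suspect.

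\textbf{The fallback.} This repairs the gap. After the paraproduct split, each piece is bounded in $L^q$, $q$ near $5/4$, by a \emph{single} norm of $f$:
\begin{itemize}
\item the low--high piece by $\|(1-\Delta)^{1/5}f\|_{L^q}\|g\|_{L^\infty}$;
\item the $g$-high and resonant pieces by $\|f\|_{L^{q_*}}\|(1-\Delta)^{1/5}g\|_{L^{15/2}}$, where $\frac1{q_*}=\frac1q-\frac2{15}$.
\end{itemize}
With $g$ fixed you can therefore interpolate each piece as a linear operator; the second is interpolated off-diagonally as $L^{q_*}\to L^q$, giving $L^{3/2,1}\to L^{5/4,1}$. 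You then finish with the Lorentz--Sobolev bound $\|f\|_{L^{3/2,1}}\lesssim\|(1-\Delta)^{1/5}f\|_{L^{5/4,1}}$. Some such reduction to a single norm is genuinely needed. Interpolating a bound whose right-hand side is a sum of two different norms of $f$ only gives boundedness on the interpolation space of an intersection, which can be strictly smaller than what you want.

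\textbf{The paper's route.} It removes this issue more economically. It applies the standard Lebesgue Kato--Ponce rule, with its $L^\infty$ endpoint, for $1<q<15/2$. It then \emph{immediately} absorbs $\|f\|_{L^{q_*}}$ into $\|(1-\Delta)^{1/5}f\|_{L^q}$ by Sobolev. For fixed $g$, the single linear operator $u\mapsto(1-\Delta)^{1/5}[g(1-\Delta)^{-1/5}u]$ is thus bounded on $L^{6/5}$ and $L^{4/3}$, with norm $\lesssim\|g\|_{L^\infty}+\|(1-\Delta)^{1/5}g\|_{L^{15/2}}$. One application of $(L^{6/5},L^{4/3})_{2/5,1}=L^{5/4,1}$ with $u=(1-\Delta)^{1/5}f$ concludes. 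This avoids paraproducts, off-diagonal interpolation, any Lorentz-space Leibniz rule, and the Lorentz--Sobolev embedding.

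\textbf{What each buys.} Your route yields a slightly sharper intermediate two-term estimate, with $\|f\|_{L^{3/2,1}}$ in the second term, which the lemma does not need. The paper's route is shorter and sidesteps all Lorentz endpoint questions.
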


\begin{proof}
Let $1<q<15/2$, and define $q_*$ by
\[
\frac1{q_*}=\frac1q-\frac2{15}.
\]
By the standard fractional Leibniz rule in Lebesgue spaces, we have
\[
\begin{aligned}
\|(1-\Delta)^{1/5}(fg)\|_{L_x^q}
\lesssim
\|(1-\Delta)^{1/5}f\|_{L_x^q}
\|g\|_{L_x^\infty}+
\|f\|_{L_x^{q_*}}
\|(1-\Delta)^{1/5}g\|_{L_x^{15/2}}.
\end{aligned}
\]
Since
\(
\frac1{q_*}=\frac1q-\frac{2/5}{3},
\)
the Sobolev embedding yields
\(
\|f\|_{L_x^{q_*}}
\lesssim
\|(1-\Delta)^{1/5}f\|_{L_x^q}.
\)
Consequently,
\begin{equation}
\begin{aligned}
\|(1-\Delta)^{1/5}(fg)\|_{L_x^q}
\lesssim
\|(1-\Delta)^{1/5}f\|_{L_x^q}
\left(
\|g\|_{L_x^\infty}
+
\|(1-\Delta)^{1/5}g\|_{L_x^{15/2}}
\right).
\end{aligned}
\label{eq:strong-fractional-Leibniz}
\end{equation}

Now fix $g$ and consider the linear operator
\[
u\longmapsto
(1-\Delta)^{1/5}
\left[g(1-\Delta)^{-1/5}u\right].
\]
It follows from \eqref{eq:strong-fractional-Leibniz}
that this operator is bounded on $L_x^q$ for every
$1<q<15/2$, with operator norm controlled by
\[
\|g\|_{L_x^\infty}
+
\|(1-\Delta)^{1/5}g\|_{L_x^{15/2}}.
\]
Take $q=6/5$ and $q=4/3$.
The real interpolation 
\(
(L_x^{6/5},L_x^{4/3})_{2/5,1}
=L_x^{5/4,1}
\)
then yields
\[
\begin{aligned}
\left\|
(1-\Delta)^{1/5}
\left[g(1-\Delta)^{-1/5}u\right]
\right\|_{L_x^{5/4,1}}
\lesssim
\left(
\|g\|_{L_x^\infty}
+
\|(1-\Delta)^{1/5}g\|_{L_x^{15/2}}
\right)
\|u\|_{L_x^{5/4,1}}.
\end{aligned}
\]
Taking $u=(1-\Delta)^{1/5}f$
completes the proof.
\end{proof}

We are now ready to give the proof of Lemma \ref{lem 3.5}.

\begin{proof}[Proof of Lemma~\ref{lem 3.5}]
Let $\varphi\in C^\infty(\T^3)$ be a trial function. By duality and
the self-adjointness of $(1-\Delta)^{-1/5}$,
we have
\begin{align*}
\left|
\int_{\T^3}
(1-\Delta)^{-1/5}(fgh)\,\varphi\,dx
\right|
&=
\left|
\int_{\T^3}
(1-\Delta)^{-1/5}h\,
(1-\Delta)^{1/5}
\bigl[fg(1-\Delta)^{-1/5}\varphi\bigr]
\,dx
\right|\\
&\lesssim
\|(1-\Delta)^{-1/5}h\|_{L_x^{5,\infty}}
\left\|
(1-\Delta)^{1/5}
\bigl[fg(1-\Delta)^{-1/5}\varphi\bigr]
\right\|_{L_x^{5/4,1}},
\end{align*}
where we used the Lorentz H\"older inequality. Applying next Lemma~\ref{lem:Lorentz-Leibniz}, we obtain
\begin{align*}
\left\|
(1-\Delta)^{1/5}
\bigl[fg(1-\Delta)^{-1/5}\varphi\bigr]
\right\|_{L_x^{5/4,1}}
&\lesssim
\|(1-\Delta)^{1/5}(fg)\|_{L_x^{5/4,1}}
\left(
\|(1-\Delta)^{-1/5}\varphi\|_{L_x^\infty}
+
\|\varphi\|_{L_x^{15/2}}
\right)\\
&\lesssim
\|(1-\Delta)^{1/5}(fg)\|_{L_x^{5/4,1}}
\|\varphi\|_{L_x^{15/2,1}},
\end{align*}
where the last inequality follows from
\eqref{eq:Sobolev-Lorentz-particular2}
and the embedding
$L_x^{15/2,1}\hookrightarrow L_x^{15/2}$. Now Lemma~\ref{lem:bilinear-Lorentz} yields
\[
\|(1-\Delta)^{1/5}(fg)\|_{L_x^{5/4,1}}
\lesssim
\|f\|_{H_x^{1/2}}\|g\|_{H_x^{1/2}}.
\]
Consequently,
\begin{align*}
\left|
\int_{\T^3}
(1-\Delta)^{-1/5}(fgh)\,\varphi\,dx
\right|
\lesssim
\|f\|_{H_x^{1/2}}
\|g\|_{H_x^{1/2}}
\|(1-\Delta)^{-1/5}h\|_{L_x^{5,\infty}}
\|\varphi\|_{L_x^{15/2,1}}.
\end{align*}
Taking the supremum over
$\|\varphi\|_{L_x^{15/2,1}}\leq1$
and using
$(L_x^{15/2,1})^*=L_x^{15/13,\infty}$,
we conclude that
\[
\|(1-\Delta)^{-1/5}(fgh)\|_{L_x^{15/13,\infty}}
\lesssim
\|f\|_{H_x^{1/2}}
\|g\|_{H_x^{1/2}}
\|(1-\Delta)^{-1/5}h\|_{L_x^{5,\infty}}.
\]
This completes the proof.
\end{proof}

\section{Proof of Proposition \ref{prop 2.2}}\label{sec4}
We close the proof of Theorem \ref{main thm} by giving the proof of Proposition \ref{prop 2.2} in the following.

\begin{proof}[Proof of Proposition~\ref{prop 2.2}]
Up to \eqref{2.4}, the claims of Proposition~\ref{prop 2.2} follow from the classical local well-posedness theory for \eqref{cnls}, we refer e.g. to \cite{HerrTataruTz1,LeeNonalg19} for standard proofs. It suffices therefore to prove \eqref{2.4}.

For each $L\in\mathbb N$, the truncated equation
\eqref{2.2} is a Hamiltonian system with Hamiltonian
\[
E_L(z)
=\frac12\int_{\T^3}|\nabla z|^2\,dx
+\frac{\mu}{4}\int_{\T^3}|P_{\le L}z|^4\,dx.
\]
Consequently, its flow preserves the canonical symplectic form
\[
\omega(g_1,g_2)
=\operatorname{Im}\int_{\T^3}
g_1\overline{g_2}\,dx,
\]
and for any $g_1,g_2\in H_x^{1/2}$, we have 
\[
\omega\bigl(
D_f\Phi_{L,t}(f)g_1,
D_f\Phi_{L,t}(f)g_2
\bigr)
=\omega(g_1,g_2),
\]
see e.g. Tao \cite{TaoBook}. Since $D_f\Phi_{L,t}(f)$ is invertible by
\eqref{2.3}, it follows 
\[
\begin{aligned}
\omega\bigl(D_f\Phi_{L,t}(f)g_1,g_2\bigr)
&=
\omega\bigl(D_f\Phi_{L,t}(f)g_1,D_f\Phi_{L,t}(f)(D_f\Phi_{L,t}(f))^{-1}g_2\bigr)\\
&=
\omega\bigl(
g_1,(D_f\Phi_{L,t}(f))^{-1}g_2
\bigr).
\end{aligned}
\]
By the duality between $H_x^{-1/2}$ and
$H_x^{1/2}$ in the form
\[
\|h\|_{H^{-1/2}}
=\sup_{\|g\|_{H^{1/2}}\le1}
\left|\int_{\mathbb T^3}h\overline g\,dx\right|
=\sup_{\|g\|_{H^{1/2}}\le1}|\omega(h,g)|
,
\]
where the second equality follows by rotating each trial function $g$ if necessary, we therefore obtain
\[
\begin{aligned}
\|D_f\Phi_{L,t}(f)g_1\|_{H_x^{-1/2}}
&=
\sup_{\|g_2\|_{H_x^{1/2}}\le1}
\left|
\omega\bigl(
D_f\Phi_{L,t}(f)g_1,g_2
\bigr)
\right|\\
&\le
\|g_1\|_{H_x^{-1/2}}
\|(D_f\Phi_{L,t}(f))^{-1}\|_
{H_x^{1/2}\to H_x^{1/2}}
\lesssim
\|g_1\|_{H_x^{-1/2}},
\end{aligned}
\]
uniformly for $|t|\le h$, $f\in\mathcal U$,
and $L\in\mathbb N$, where the last inequality
follows from \eqref{2.3}. This finishes the desired proof.
\end{proof}

\subsubsection*{Acknowledgements}
Y.L. was supported by the NSF grant of Guangdong (No. 2024A1515010497), the QB-Program of Guangdong (No. 2024QN11X141) and the NSF grant of China (No. 12301301).

\subsubsection*{Data availability}
Data sharing is not applicable to this article as no datasets were generated or analysed during the current study.

\subsubsection*{Conflict of interest}
The authors declare that they have no conflict of interest.

\end{document}